\documentclass{amsart}
\usepackage{amssymb,amsmath,amscd,amsthm}
\usepackage{amsfonts,epsfig,latexsym,graphicx,amssymb}

\date{\today}

\newcommand{\Z}{{\mathbb Z}}
\newcommand{\R}{{\mathbb R}}
\newcommand{\C}{{\mathbb C}}
\newcommand{\N}{{\mathbb N}}
\newcommand{\T}{{\mathbb T}}
\newcommand{\Q}{{\mathbb Q}}

\newtheorem{theorem}{Theorem}[section]
\newtheorem{lemma}[theorem]{Lemma}
\newtheorem{prop}[theorem]{Proposition}
\newtheorem{coro}[theorem]{Corollary}
\newtheorem{defi}[theorem]{Definition}
\theoremstyle{definition}
\newtheorem{remark}[theorem]{Remark}

\newtheorem{exam}[theorem]{Example}
\newtheorem{quest}[theorem]{Question}

\sloppy


\newcommand{\tr}{\mathrm{tr} }

\def\N{{\mathbb N}}

\newcommand{\m}{{\bf m}}

\def\be{\begin{equation}}
\def\ee{\end{equation}}

\begin{document}

\title[Transport Exponents of Sturmian Hamiltonians]{Transport Exponents of Sturmian Hamiltonians}

\author[D.\ Damanik]{David Damanik}

\address{Department of Mathematics, Rice University, Houston, TX~77005, USA}

\email{damanik@rice.edu}

\thanks{D.\ D.\ was supported in part by NSF grants DMS--1067988 and DMS--1361625.}

\author[A.\ Gorodetski]{Anton Gorodetski}

\address{Department of Mathematics, University of California, Irvine, CA~92697, USA}

\email{asgor@math.uci.edu}

\thanks{A.\ G.\ was supported in part by NSF grant DMS--1301515.} 

\author[Q.-H.\ Liu]{Qing-Hui Liu}

\address{Department of Computer Science, Beijing Institute of Technology, Beijing 100081, P.R.\ China}

\email{qhliu@bit.edu.cn}

\thanks{Q.-H.\ L.\ was   supported by the National Natural Science Foundation of China, No. 11371055.}

\author[Y.-H.\ Qu]{Yan-Hui Qu}

\address{Department of Mathematics, Tsinghua University, Beijing 100084, P.R.\ China}

\email{yhqu@math.tsinghua.edu.cn}

\thanks{Y.-H.\ Q.\ was   supported by the National Natural Science Foundation of China, No. 11201256, 11371055 and 11431007.}

\begin{abstract}
We consider discrete Schr\"odinger operators with Sturmian potentials and study the transport exponents associated with them.
Under suitable assumptions on the frequency, we establish upper and lower bounds for the upper transport exponents. As an
application of these bounds, we identify the large coupling asymptotics of the upper transport exponents for frequencies of
constant type. We also bound the large coupling asymptotics uniformly from above for Lebesgue-typical frequency. A particular
consequence of these results is that for most frequencies of constant type, transport is faster than for Lebesgue almost every
frequency. We also show quasi-ballistic transport for all coupling constants, generic frequencies, and suitable phases.
\end{abstract}

\maketitle

\section{Introduction}\label{s.intro}

In this paper we study discrete Schr\"odinger operators
$$
[H_{\lambda,\alpha,\omega}\psi](n) = \psi(n+1) + \psi(n-1) + \lambda \chi_{[1 - \alpha,1)} (n \alpha + \omega \!\!\!\!\mod 1)
\psi(n)
$$
in $\ell^2(\Z)$, where $\lambda > 0$ is the coupling constant, $\alpha \in (0,1) \setminus \Q$ is the frequency, and $\omega
\in [0,1)$ is the phase. These operators are popular models of one-dimensional quasicrystals and have been studied since the
1980's; see, for example, \cite{B92, BIST89, BIT91, D07, DEG} and references therein. The special case $\alpha =
\frac{\sqrt{5}-1}{2}$ gives rise to the Fibonacci Hamiltonian, which is the most heavily studied case within this class of
operators.

By the minimality of irrational rotations of the circle and strong operator convergence it follows that the spectrum of
$H_{\lambda,\alpha,\omega}$ is independent of $\omega$ and may therefore be denoted by $\Sigma_{\lambda,\alpha}$. The spectrum
does, however, depend on $\lambda$ and $\alpha$. It is known from \cite{BIST89} that $\Sigma_{\lambda,\alpha}$ is a Cantor set
of zero Lebesgue measure. The zero-measure property implies that all spectral measures are purely singular. On the other hand,
the absence of eigenvalues was shown in \cite{DKL00} for all allowed parameters. As a consequence, the operators
$H_{\lambda,\alpha,\omega}$ have purely singular continuous spectrum.

Therefore, the RAGE Theorem (see, e.g., \cite[Theorem~XI.115]{RS3}) suggests that when studying the Schr\"odinger time
evolution for this Schr\"odinger operator, that is, $e^{-itH_{\lambda,\alpha,\omega}} \psi$ for some initial state $\psi \in
\ell^2(\Z)$, one should consider time-averaged quantities. For simplicity, let us consider initial states of the form
$\delta_n$, $n \in \Z$. Since a translation in space simply results in an adjustment of the phase, we may without loss of
generality focus on the particular case $\psi = \delta_0$. The time-averaged spreading of $e^{-itH_{\lambda, \alpha, \omega}}
\delta_0$ is usually captured on a power-law scale as follows; compare, for example, \cite{DT10, L96}. For $p > 0$, consider
the $p$-th moment of the position operator,
$$
\langle |X|^p \rangle (t) = \sum_{n \in \Z} |n|^p | \langle e^{-itH_{\lambda,\alpha,\omega}} \delta_0 , \delta_n \rangle |^2
$$
We average in time as follows. If $f(t)$ is a function of $t > 0$ and $T > 0$ is given, we denote the time-averaged function
at $T$ by $\langle f \rangle (T)$:
$$
\langle f \rangle (T) = \frac{2}{T} \int_0^{\infty} e^{-2t/T} f(t) \, dt.
$$
Then, the corresponding upper and lower transport exponents $\tilde \beta^+(p)$ and $\tilde \beta^-(p)$ are given,
respectively, by
$$
\tilde \beta^+(p) = \limsup_{T \to \infty} \frac{\log \langle \langle |X|^p \rangle \rangle (T) }{p \, \log T},
$$
$$
\tilde \beta^-(p) = \liminf_{T \to \infty} \frac{\log \langle \langle |X|^p \rangle \rangle (T) }{p \, \log T}.
$$
The transport exponents $\tilde \beta^\pm(p)$ belong to $[0,1]$ and are non-decreasing in $p$ (see, e.g., \cite{DT10}), and
hence the following limits exist:
\begin{align*}
\tilde \alpha_\ell^\pm & = \lim_{p \to 0} \tilde \beta^\pm(p), \\ \tilde \alpha_u^\pm & = \lim_{p \to \infty} \tilde
\beta^\pm(p).
\end{align*}

Sometimes it is not necessary to take time-averages, even in the purely singular continuous situation. Thus, one considers the
following analogues of the quantities above,
\begin{align*}
\beta^+(p) & = \limsup_{t \to \infty} \frac{\log \langle |X|^p \rangle (t) }{p \, \log t}, \\ \beta^-(p) & = \liminf_{t \to
\infty} \frac{\log \langle |X|^p \rangle (t) }{p \, \log t}, \\ \alpha_\ell^\pm & = \lim_{p \to 0} \beta^\pm(p), \\
\alpha_u^\pm & = \lim_{p \to \infty} \beta^\pm(p).
\end{align*}

Ballistic transport corresponds to transport exponents being equal to one, diffusive transport corresponds to the value
$\frac12$, and vanishing transport exponents correspond to (some weak form of) dynamical localization. In all other cases,
transport is called anomalous.

One-dimensional quasicrystals have long been expected to give rise to anomalous behavior. Many papers have been devoted to a
study of the transport properties of the Fibonacci Hamiltonian. For example, it is known that all the time-averaged transport
exponents defined above are strictly positive for all $\lambda > 0$, $\omega \in \T$; see \cite{DKL00}. On the other hand,
upper bounds for all the transport exponents were shown in \cite{DT07} for $\lambda > 8$. The exact large coupling asymptotics
of $\tilde \alpha_u^\pm$ were identified in \cite{DT08}, where it was shown that
\begin{equation}\label{e.inftyapproach2}
\lim_{\lambda \to \infty} \tilde \alpha_u^\pm \cdot \log \lambda = 2 \log \frac{1 + \sqrt{5}}{2},
\end{equation}
uniformly in $\omega \in \T$. In particular, the Fibonacci Hamiltonian indeed gives rise to anomalous transport for
sufficiently large coupling. The behavior in the weak coupling regime was studied in \cite{DG14}, where it was shown that
there is a constant $c > 0$ such that for $\lambda > 0$ sufficiently small, we have
$$
1 - c\lambda^2 \le \tilde \alpha_u^\pm \le 1,
$$
uniformly in $\omega \in \T$.

In this paper we will study the behavior of the transport exponents for more general frequencies $\alpha$. The main motivation
for this is that bounding transport exponents is a highly non-trivial task, especially when they take fractional values, and
very little was known beyond the Fibonacci case. The family of Sturmian Hamiltonians is the natural next step up in generality
and it is of interest to explore in which generality fractional transport exponents occur within this class and whether the
exponents (or at least their asymptotics in the regime of large or small coupling) can be identified exactly.

Given a frequency $\alpha \in (0,1) \setminus \Q$, consider its continued fraction expansion
$$
\alpha = \cfrac{1}{a_1 + \cfrac{1}{a_2 + \cfrac{1}{a_3 + \cdots}}} =: [a_1,a_2,a_3,\ldots]
$$
with uniquely determined $a_k \in \Z_+ = \{ 1, 2, 3, \ldots \}$. The $k$-th continued fraction approximant of $\alpha$ is
given by $\frac{p_k}{q_k}$, where
\begin{align*}
p_{-1} & = 1, \; p_0 = 0, \; p_{k+1} = a_{k+1} p_k + p_{k-1}, \; k \ge 0, \\ q_{-1} & = 0, \; q_0 = 1, \; q_{k+1} = a_{k+1}
q_k + q_{k-1}, \; k \ge 0.
\end{align*}

We say that $\alpha$ is of constant (resp., bounded) type if the sequence $\{ a_k \}$ is constant (resp., bounded). Moreover,
$\alpha$ is said to have bounded density if the sequence of Ces\`aro averages $\{ \frac1n \sum_{k = 1}^n a_k \}$ is bounded.
Each of these classes of $\alpha$'s has zero Lebesgue measure. For frequencies of bounded density and any coupling and phase,
it was shown in \cite{DKL00} that all time-averaged transport exponents are strictly positive. The lower bounds obtained in
\cite{DKL00}, while establishing strict positivity, are not expected to be sharp.

In this paper we establish both upper and lower bounds for the transport exponents under suitable assumptions on the
parameters. These bounds are very likely sharp in many cases (this fact has been verified in the Fibonacci case and this
verification should work in a similar way for more general frequencies). The detailed estimates can be found in
Proposition~\ref{p.transportbounds}, but the formulation of this result requires several definitions that will be given later.
Here in the introduction we focus on some particular consequences that are easy to state.

Our first result provides an upper bound for all time-averaged transport exponents in the large-coupling regime that is
uniform in the frequency on a set of full measure.

\begin{theorem}\label{t.bound}
For Lebesgue almost every $\alpha$, and uniformly in $\omega$, we have
$$
\limsup_{\lambda \to \infty} \tilde \alpha_u^+ \cdot \log \lambda \le \frac{\pi^2}{12 \log \frac{1+\sqrt{5}}{2}}.
$$
\end{theorem}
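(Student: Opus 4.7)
The plan is to combine the explicit upper bound of Proposition~\ref{p.transportbounds} with the metric theory of continued fractions. Since the bound in Proposition~\ref{p.transportbounds} is already uniform in the phase $\omega$, the same uniformity will be inherited by the resulting large-$\lambda$ estimate without additional work.

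First I would unpack Proposition~\ref{p.transportbounds} at a suitably chosen scale $n=n(\lambda)$ of continued-fraction truncation of $\alpha$, so that in the limit $\lambda \to \infty$ the bound takes the form
\[
\limsup_{\lambda \to \infty} \tilde \alpha_u^+ \cdot \log \lambda \;\le\; F(\alpha),
\]
where $F(\alpha)$ depends only on the partial quotients $(a_1, a_2, \ldots)$ of $\alpha$. In the Fibonacci case ($a_k \equiv 1$) the optimization scheme used by \cite{DT08} to derive~\eqref{e.inftyapproach2} yields $F(\alpha) = 2\log \phi$; for general $\alpha$ the analogous quotient should present itself, after normalization by $n$, as a ratio of two Birkhoff-type expressions in the partial quotients under the Gauss map, one coming from $\log q_n$ (the scale over which the wave packet can plausibly spread) and one coming from the logarithmic growth rate of the transfer matrices at that scale in the large-$\lambda$ regime.

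Second, I would evaluate $F(\alpha)$ almost surely. The Gauss map $T\colon x \mapsto \{1/x\}$ on $(0,1)\setminus\Q$ is ergodic with respect to the Gauss measure $\mathrm d\mu_G = \tfrac{1}{\log 2}\tfrac{dx}{1+x}$, and the relevant Birkhoff sums are integrable. The key classical input is Lévy's theorem
\[
\lim_{n\to\infty} \frac{\log q_n(\alpha)}{n} \;=\; \frac{\pi^2}{12 \log 2} \qquad \text{for a.e.\ }\alpha,
\]
together with the almost-sure convergence of the ergodic averages in the denominator of $F$. Combining these inputs, I would identify the almost-sure value of $F(\alpha)$ as $\pi^2/(12 \log \phi)$; the numerical coefficient $\log \phi$ (rather than $\log 2$) must emerge from the denominator, namely from the same transfer-matrix growth mechanism that produced $2\log \phi$ in the Fibonacci calculation.

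The principal obstacle, as I see it, lies entirely in the first step: making the correct choice of scale $n(\lambda)$ in Proposition~\ref{p.transportbounds} so that (i) the $\lambda$-dependence factors cleanly into the single term $\log \lambda$, (ii) the subleading corrections inside the logarithms (polynomial-in-$\lambda$ factors, $\omega$-dependent constants, and $\alpha$-dependent pre-factors) are all absorbed into $o(1)$-terms as $\lambda\to\infty$, and (iii) the resulting arithmetic quantity $F(\alpha)$ is indeed an ergodic average amenable to Lévy's theorem. Once $F(\alpha)$ is in Birkhoff-sum form, both the ergodic theorem and Lévy's formula are classical and the almost-sure coincidence with $\pi^2/(12\log \phi)$ is a short calculation; all the delicate analytic work is packaged in Proposition~\ref{p.transportbounds} and in the calibration of its scale parameter.
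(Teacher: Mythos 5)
Your overall architecture coincides with the paper's: apply the upper bound \eqref{e.upperbound} of Proposition~\ref{p.transportbounds}, use L\'evy's theorem $\lim_k \frac1k\log q_k = \pi^2/(12\log 2)$ a.e.\ for the numerator, and evaluate the denominator almost surely by ergodicity of the Gauss map. However, the heart of the proof is precisely the part you leave as a black box, and your description of where $\log\frac{1+\sqrt5}{2}$ comes from is not the right mechanism. There is no calibration of a scale $n(\lambda)$: the bound \eqref{e.upperbound} is stated for each fixed $\lambda$ with $k\to\infty$ limits already taken; one then divides by $\log\lambda$ and lets $\lambda\to\infty$. The denominator $\liminf_k\frac1k\log\min\{|t_B'(z_B)|:B\in\mathcal G_k\}$ is converted, via Proposition~\ref{p.lqw14}, into $-\limsup_k\frac1k\log|B_{max,k}|$, the decay rate of the \emph{longest} spectral generating band, and Proposition~\ref{p.maxband} gives
$$
|B_{max,k}| \asymp C^{\pm k}\,\delta_k^{-k}\,\lambda^{-k+\sum_{j=1}^s\lfloor (m_j+1)/2\rfloor},
$$
where the $m_j$ are the lengths of the maximal blocks of $1$'s in $a_1\cdots a_k$. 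So after dividing by $\log\lambda$ the entire problem reduces to showing that, almost surely,
$$
\lim_{k\to\infty}\frac1k\sum_{j=1}^s\Big\lfloor\frac{m_j+1}{2}\Big\rfloor \;=\; 1-\frac{\log\frac{1+\sqrt5}{2}}{\log 2}.
$$

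This is the genuinely nontrivial step, and it is not "a short calculation" with a standard Birkhoff sum. Writing $\sum_j\lfloor(m_j+1)/2\rfloor=\frac12\#\{\text{$1$'s}\}+\frac12\#\{j: m_j \text{ odd}\}$, the first term is an honest ergodic average of a cylinder function (value $\frac{\log 4/3}{2\log 2}$), but the second term counts maximal $1$-blocks of odd length, which is not a cylinder function of bounded length; the paper evaluates it by a telescoping infinite sum of Gauss measures of the cylinders $[1^m]$, whose endpoints are the Fibonacci ratios $F_{k-1}/F_k$. That is where the golden ratio enters --- through $\mu_G([1^m])$ and the limit $\theta_\infty=\frac{\sqrt5-1}{2}$ --- and \emph{not} through "the same transfer-matrix growth mechanism that produced $2\log\phi$ in the Fibonacci calculation," as you suggest. (The only echo of the Fibonacci Hamiltonian is structural: inside a block of $m$ consecutive $1$'s the longest band only decays like $\lambda^{-m/2}$ rather than $\lambda^{-m}$, which is why the block decomposition and the parity of $m_j$ matter at all.) Without identifying this functional and carrying out that computation, the constant $\pi^2/(12\log\frac{1+\sqrt5}{2})$ cannot be obtained, so as it stands your proposal has the correct skeleton but is missing its essential quantitative content.
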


Theorem~\ref{t.bound} at first glance looks like merely a slight improvement over \cite[Corollary 1]{M10} (which implies that
$\limsup_{\lambda \to \infty} \tilde \alpha_u^+ \cdot \log \lambda \le \frac{\pi^2}{6\log 2}$), but it gives a bound that
appears to be a good candidate for a sharp bound (or even exact asymptotics). Moreover, as we will discuss below, the proofs
in \cite{M10} have several gaps, so that most of the main results of \cite{M10}, including \cite[Corollary 1]{M10}, are
actually not completely proved there. For some of the results there it is even doubtful whether they are true as stated.

Thus, recognizing that the way we prove Theorem~\ref{t.bound} suggests that the asymptotics obtained there may very well be
optimal, we would like to formulate the following question.

\begin{quest}
Is the bound given by Theorem~\ref{t.bound} sharp? Or, even stronger, is it true that for Lebesgue almost every $\alpha$, we
have $\lim_{\lambda \to \infty} \tilde \alpha_u^+ \cdot \log \lambda = \frac{\pi^2}{12 \log \frac{1+\sqrt{5}}{2}}$?
\end{quest}

Our next result identifies the precise large coupling asymptotics of the time-averaged upper transport exponents for
frequencies of constant type.

\begin{theorem}\label{t.consttype}
Let $\alpha = [m, m, m, \ldots]$, $m\ge 1$ be an irrational number of constant type. Then for every $\lambda > 4$, $\tilde
\alpha^+_u = \tilde \alpha^-_u$, the common value is independent of $\omega$, and
$$
\lim_{\lambda \to \infty} \tilde{\alpha}^{\pm}_u \cdot \log \lambda = \left\{
                                                            \begin{array}{ll}
                                                             2\log\frac{m+\sqrt{m^2+4}}{2}= 2\log{\frac{1 + \sqrt{5}}{2}} &
                                                             \hbox{if $m=1$,} \\
                                                              \log\frac{m+\sqrt{m^2+4}}{2} & \hbox{if $m\ne 1$.}
                                                            \end{array}
                                                          \right.
$$
\end{theorem}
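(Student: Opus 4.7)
My plan is to deduce Theorem~\ref{t.consttype} from the general upper and lower bounds on the transport exponents given by Proposition~\ref{p.transportbounds}. The first step is to specialize those bounds to $\alpha = [m,m,m,\ldots]$. Because the partial quotients are constant, the denominators $q_k$ of the continued-fraction approximants satisfy the constant-coefficient recurrence $q_{k+1} = m q_k + q_{k-1}$, so $q_k \sim c \kappa^k$ with $\kappa = \frac{m + \sqrt{m^2+4}}{2}$ the dominant root of $x^2 - m x - 1 = 0$. More importantly, the associated trace-map dynamics, which drive the renormalization structure appearing in Proposition~\ref{p.transportbounds}, becomes (asymptotically) periodic, so that all scale-dependent quantities stabilize uniformly in $k$ rather than only along subsequences.

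Second, I would verify that on this periodic orbit, the upper bound and the lower bound from Proposition~\ref{p.transportbounds} converge to the same value as $\lambda \to \infty$. Since the renormalization step is the same at every scale, the $\limsup$ and $\liminf$ in the definitions of $\tilde\alpha_u^\pm$ collapse, giving $\tilde\alpha_u^+ = \tilde\alpha_u^-$ for every $\lambda > 4$, uniformly in $\omega$. Extracting the large-$\lambda$ asymptotics then reduces to a straightforward calculation: the dominant balance in the bound yields a term of the form $\frac{\log \kappa^{s}}{\log \lambda}$, where $s$ is the number of trace-map iterates that correspond to one continued-fraction step.

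Third, I would carry out the counting that produces the dichotomy between $m = 1$ and $m \ge 2$. For $m \ge 2$, each iterate of the Sturmian trace map advances the continued-fraction index by one, so $s = 1$ and the limit is $\log \kappa = \log\frac{m+\sqrt{m^2+4}}{2}$. For $m = 1$ (Fibonacci), the substitution rule associated to the partial quotient $a_k = 1$ forces two trace-map iterates per effective renormalization step, giving $s = 2$ and the limit $2 \log \frac{1+\sqrt 5}{2}$, consistent with \eqref{e.inftyapproach2}. This combinatorial accounting is exactly what distinguishes the Fibonacci case from the other constant-type frequencies.

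The main obstacle I anticipate is the precise matching of the upper and lower bounds. Proposition~\ref{p.transportbounds} is, I expect, formulated asymmetrically (with the upper bound relying on transfer matrix norm estimates and the lower bound on Green's function or spreading constructions), so showing that the two sides yield identical large-$\lambda$ limits requires a careful book-keeping of the multiplicative constants produced by the periodic orbit of the trace map. A secondary subtlety is ruling out that unusually slow initial transients in the trace-map orbit (before it settles onto its attractor) spoil the $\limsup/\liminf$ equality at finite but large $\lambda$; constant type is precisely the hypothesis that eliminates this issue, since the orbit is already periodic modulo a short pre-period.
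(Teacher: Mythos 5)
Your overall framework is the right one --- the theorem is indeed deduced from Proposition~\ref{p.transportbounds}, specifically from the identity \eqref{e.exponentsareequal}, and your computation $\lim_k \frac1k\log q_k = \log\frac{m+\sqrt{m^2+4}}{2}$ matches the paper. But the two places where you wave your hands are exactly where the actual work lies. First, \eqref{e.exponentsareequal} requires the \emph{existence} of $\lim_{k\to\infty}\frac1k\log\min\{|t_B'(z_B)| : B\in\mathcal{G}_k\}$; your assertion that ``the renormalization step is the same at every scale, so the $\limsup$ and $\liminf$ collapse'' is not a proof of this. The paper obtains existence by invoking the trace-map analysis of \cite[Proposition~3.7]{DGY14} (multipliers of periodic points of the trace map), noting the argument carries over verbatim for $m\ne 1$; some such input is needed and cannot be skipped.

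Second, and more seriously, your explanation of the $m=1$ versus $m\ge 2$ dichotomy --- ``two trace-map iterates per effective renormalization step'' for Fibonacci, hence $s=2$ in the numerator --- is a heuristic that happens to land on the right number but is not the mechanism, and you give no argument for it. In the paper the factor of $2$ lives in the \emph{denominator}: one converts $\min|t_B'(z_B)|$ into the length of the longest spectral generating band via Proposition~\ref{p.lqw14} ($|t_B'|\cdot|B|\asymp 1$), and then Proposition~\ref{p.maxband} shows $|B_{max,k}|\asymp \lambda^{-k+\sum_j\lfloor(m_j+1)/2\rfloor}$ up to $\lambda$-independent exponential factors. For $m\ge 2$ there are no partial quotients equal to $1$, so the sum vanishes and $|B_{max,k}|\asymp (m\lambda)^{-k}$; for $m=1$ the whole word is a single block $1^k$ and the admissibility constraint (no $e_{12}e_{12}$) caps the number of ``free'' $e_{12}$-steps at $\lfloor(k+1)/2\rfloor$, giving $|B_{max,k}|\asymp\lambda^{-k/2}$. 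Dividing $\log\frac{m+\sqrt{m^2+4}}{2}$ by $\log\lambda$ or $\frac12\log\lambda$ respectively yields the claimed limits. Without Proposition~\ref{p.maxband} (or an equivalent count of the $e_{12}$-edges in the band coding) and the band-length/derivative duality of Proposition~\ref{p.lqw14}, your step 3 is an unproven assertion, so the proposal as written has a genuine gap at the heart of the asymptotic computation.
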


\begin{remark}
(a) The case $m=1$ corresponds to the Fibonacci Hamiltonian, and as pointed out above, for that case Theorem \ref{t.consttype}
is not new. Namely, in that case the identity $\tilde{\alpha}_u^+ = \tilde{\alpha}_u^-$ is contained in \cite[Theorem
1.2]{DGY14}, and the asymptotics for $\lambda \to \infty$ were established in \cite{DT07, DT08}. For $m \ge 2$, the result is
new.

(b) As we will explain in Section~\ref{sec.5}, the proof of Theorem~\ref{t.consttype} extends to frequencies whose continued
fraction expansion is eventually periodic. In this case, an exact asymptotic statement as above holds, and the limit can in
principle be calculated.

(c) Theorem~\ref{t.consttype} complements the results obtained recently by Liu, Qu, Wen \cite{LQW14}, Munger \cite{M14}, and
Qu \cite{Q14} who considered general Sturmian Hamiltonians and studied the large coupling asymptotics of other interesting
quantities associated with these operators, namely the Hausdorff and box counting dimension of the spectrum, the Hausdorff
dimension of the density of states measure, and the optimal H\"older exponent of the integrated density of states.
\end{remark}

\begin{remark}
It is also interesting to compare Theorem~\ref{t.bound} and Theorem~\ref{t.consttype}. For $m \ge 6$, we have $\log
\frac{m+\sqrt{m^2+4}}{2} > \frac{\pi^2}{12 \log \frac{1+\sqrt{5}}{2}}$. Therefore, for $m \ge 6$ and sufficiently large
coupling, the corresponding transport exponent is larger than the transport exponents corresponding to Lebesque almost every
frequency. This is in some sense surprising as heuristic arguments may suggest that transport for a frequency of constant type
should not be faster than transport for a frequency with unbounded continued fraction coefficients. This, however, is one of
the mysteries of Sturmian Hamiltonians. It remains unclear in which way large continued fraction coefficients influence
dimensional and transport properties, but the pair of results above is a first step toward clarifying this.
\end{remark}

Finally, we also have a statement about the time-averaged lower transport exponent which holds for generic frequencies.

\begin{theorem}\label{t.quasiballistic2}
For every $\lambda > 0$, there exists a dense $G_\delta$ set $\mathcal{G} \subset (0,1)$ such that for every $\alpha \in
\mathcal{G}$, there exists $\omega$ such that for the operator $H_{\lambda,\alpha,\omega}$, we have $\tilde \alpha^+_\ell =
1$.
\end{theorem}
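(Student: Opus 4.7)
The plan is to run a Baire category argument in the product space $(0,1)\times[0,1)$, combining ballistic transport at rational approximants with strong continuity of the operator family at irrational frequencies, in the spirit of Simon's approach to genericity of quasi-ballistic transport. First I would reduce the conclusion: since $p\mapsto\tilde\beta^+(p)$ is non-decreasing in $p$ and bounded by $1$, and $\tilde\alpha^+_\ell=\inf_{p>0}\tilde\beta^+(p)$, the equality $\tilde\alpha^+_\ell = 1$ is equivalent to the countable family of conditions: for every $k, n\in\N$, there exists $T > n$ with $\langle\langle|X|^{1/k}\rangle\rangle(T) > T^{1/k - 1/(kn)}$.

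Two ingredients feed the category argument. \emph{Periodic ballistic transport}: for every rational $p/q$ and any $\omega_0$, $H_{\lambda,p/q,\omega_0}$ is periodic, has purely absolutely continuous band spectrum, and a Bloch wave analysis gives $\langle|X|^{1/k}\rangle(t)\asymp t^{1/k}$, which after Laplace averaging yields $\langle\langle|X|^{1/k}\rangle\rangle(T)\asymp T^{1/k}$, so the inequality above holds comfortably for large $T$. \emph{Continuity}: at every irrational $\alpha_0$ and every $\omega_0$ outside the countable exceptional set $\{-n\alpha_0\bmod 1:n\in\Z\}$, the potential at each site is locally constant in $(\alpha,\omega)$ (the characteristic function's discontinuities are not triggered), so $H_{\lambda,\alpha,\omega}$ converges in strong operator topology; a Combes--Thomas (or Lieb--Robinson) bound then truncates the infinite sum in the moment uniformly and one obtains continuity of $(\alpha,\omega)\mapsto\langle\langle|X|^{1/k}\rangle\rangle(T,\alpha,\omega)$ at $(\alpha_0,\omega_0)$ for each fixed finite $T$.

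Now I would set
$$
V_{k,n} = \{(\alpha,\omega)\in((0,1)\sms\Q)\times[0,1) : \exists\, T>n,\ \langle\langle|X|^{1/k}\rangle\rangle(T,\alpha,\omega) > T^{1/k - 1/(kn)}\}.
$$
Continuity makes each $V_{k,n}$ open; for density, given any open box $I\times J$ in the product, pick a rational $p/q\in I$ and a good $\omega_0\in J$, use ballistic transport to produce a large $T>n$ witnessing the strict inequality at $(p/q,\omega_0)$, and take $\alpha\in I\sms\Q$ close enough to $p/q$ that the potentials at $(\alpha,\omega_0)$ and $(p/q,\omega_0)$ coincide on $|n|\lesssim q^{-1}|\alpha-p/q|^{-1}$, a range that contains the Lieb--Robinson light cone up to time $T$; the inequality then transfers to $(\alpha,\omega_0)$. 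By Baire, $\mathcal{V}:=\bigcap_{k,n}V_{k,n}$ is a dense $G_\delta$ in the product space, and the Kuratowski--Ulam theorem produces a dense $G_\delta$ set $\mathcal{G}\subset(0,1)$ such that for every $\alpha\in\mathcal{G}$ the slice $\{\omega:(\alpha,\omega)\in\mathcal{V}\}$ is a dense $G_\delta$, hence non-empty; any $\omega$ in the slice furnishes an operator with $\tilde\alpha^+_\ell=1$. The main obstacle is the density step, where the time $T$ must be chosen large enough for the periodic ballistic asymptotics to manifest yet small enough for the finite-volume comparison to $p/q$ to remain valid, a balance achieved by taking $\alpha$ sufficiently close to $p/q$ (with closeness quantified in terms of the required $T$ and of $q$).
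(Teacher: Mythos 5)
Your plan is sound, but it follows a genuinely different route from the one in the paper. The paper never works in the product space: it fixes, for each rational $\alpha_r=[a_1,\ldots,a_\ell]$ and each scale $m$, a threshold $S(m,\alpha_r)$ and defines the explicit open dense sets $U(m,\alpha_r)=\{\alpha: a_j(\alpha)=a_j(\alpha_r),\ j\le\ell,\ a_{\ell+1}(\alpha)\ge S(m,\alpha_r)\}$, so that $\mathcal G=\bigcap_m\bigcup_{\alpha_r}U(m,\alpha_r)$ consists of frequencies whose continued fraction expansion has, infinitely often, a huge coefficient after a prescribed initial block; the phase $\omega$ is then \emph{constructed} iteratively, by shifting the $q_\ell$-periodic Sturmian blocks (which a priori repeat only on a half-line to the right of the origin) so that they become centered at the origin, and by arranging that each subsequent modification of the potential leaves the previously fixed window untouched. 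You avoid the shifting issue entirely by perturbing the frequency at a \emph{fixed generic} phase $\omega_0$: since $\{np/q+\omega_0 \bmod 1\}$ is a finite set staying at positive distance from the discontinuities of $\chi_{[1-\alpha,1)}$, taking $|\alpha-p/q|$ small forces coincidence of the potentials on a symmetric window $[-N,N]$, which is exactly the two-sidedness that the paper identifies as the gap in \cite{M10}. Your approach buys genericity in the phase (a comeager slice of $\omega$'s via Kuratowski--Ulam, versus the paper's single constructed phase) at the cost of an abstract, non-explicit $\mathcal G$; the paper's approach buys an explicit description of $\mathcal G$ in terms of continued fractions and stays within the Sturmian combinatorics used elsewhere in the paper. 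Two points in your write-up need attention before the argument is complete: first, $V_{k,n}$ is open only at points where the sampling function's discontinuities are not triggered, so you must restrict the Baire argument to the good set $\mathcal P=\{(\alpha,\omega):n\alpha+\omega\not\equiv 0,\,1-\alpha \ (\mathrm{mod}\ 1)\ \forall n\}$, verify that $\mathcal P$ is a dense $G_\delta$ (hence Polish and comeager) in the product, and then transfer comeagerness back to $(0,1)\times[0,1)$ before invoking Kuratowski--Ulam; second, the continuity of $(\alpha,\omega)\mapsto\langle\langle|X|^{1/k}\rangle\rangle(T)$ at fixed $T$ requires a truncation of both the sum over $n$ (via a Lieb--Robinson/Combes--Thomas tail bound uniform in the potential) and the Laplace integral over $t$, which you mention but should make uniform over a neighborhood of the base point.
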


\begin{remark}\label{r.marinremark2}
(a) Both $\tilde \beta^+(p)$ and $\beta^+(p)$ are non-decreasing in $p$ and take values in $[0,1]$ \cite{DT10}. Moreover, we
have the general bounds $\tilde \beta^+(p) \le \beta^+(p)$ for every $p > 0$; see (the proof of) \cite[Lemma~7.2]{DLLY}. In
particular, for the operator $H_{\lambda,\alpha,\omega}$ found in the theorem, we have $\tilde \alpha_\ell^+ = \alpha_\ell^+ =
\tilde \alpha_u^+ = \alpha_u^+ = 1$.

(b) Theorem~6 in \cite{M10}, which is the basis for the assertion in \cite[Theorem~2]{M10}, claims that there exists $\alpha$
such that for every $\lambda > 20$, we have $\tilde \alpha_u^+ = 1$ for the operator $H_{\lambda,\alpha,0}$. What we add here
is the extension throughout the entire range of moments $p > 0$ and couplings $\lambda > 0$, the consideration of both
time-averaged and non-time-averaged quantities, and the genericity of the set of frequencies for which such a statement can be
shown. Moreover, and more importantly, there is a mistake in the proof of \cite[Theorem~6]{M10}. The author only forces
periodic approximation on a half-line, while the argument needs periodic approximation in both directions. This necessitates
the adjustments of the phase, and hence it is actually not clear whether the result as claimed in \cite{M10} even holds.

(c) While the overall argument is inspired by a construction of Last in \cite{L96}, we also borrow some ideas from the work
\cite{DLY} on quasi-ballistic transport for generic limit-periodic operators. However, there is a significant difference
between our case and smooth quasi-periodic cases covered by Last's argument and limit-periodic cases studied in \cite{DLY}. In
our case the phase dependence is delicate due to the discontinuity of the sampling function. In particular, care must be taken
in the construction of the phase for which one has the desired dynamical lower bounds, and the set of phases one obtains in
this way is significantly smaller than in the other two scenarios. We don't attempt to optimize this and only generate one
phase that works. Moreover our construction is more involved due to these difficulties.
\end{remark}

\section{The Structure and Coding of the Spectrum}\label{structure}

We describe the structure of the spectrum $\Sigma = \Sigma_{\lambda,\alpha}$ for some fixed $\lambda$ and $\alpha \in (0,1)
\setminus \Q$. We will see that $\Sigma$ has a natural covering structure which can be associated with a natural coding.

Let $k \geq 1$ and $z \in \mathbb{C}$, the transfer matrix $M_k(z)$ for zero phase over $q_k$ sites is defined by
$$
{\mathbf M}_k(z):= T_{q_k} T_{q_k - 1}  \cdots T_1,
$$
where
$$
T_j = \left[\begin{array}{cc}z - \lambda \chi_{[1 - \alpha,1)} (j \alpha \!\!\!\!\mod 1) &-1\\ 1&0\end{array}\right].
$$
By convention we take
$$
\begin{array}{l}
{\mathbf M}_{-1}(z) = \left[\begin{array}{cc} 1 & -\lambda \\ 0&1\end{array}\right],\quad {\mathbf M}_{0}(z)=
\left[\begin{array}{cc}z&-1\\ 1&0\end{array}\right].
\end{array}
$$

\smallskip

For $k\ge0$, $p\ge-1$, let $t_{(k,p)}(z) = \tr ({\mathbf M}_{k-1}(z) {\mathbf M}_k^p(z))$ and
$$
\sigma_{(k,p)} = \{ z \in \mathbb{R} : |t_{(k,p)}(z)| \leq 2 \},
$$
where $\tr M$ stands for the trace of the matrix $M$.

With these notations, we collect some known facts (see \cite{FLW11, LQW14, LW04, R97}) that will be used later.
\begin{itemize}
\item[(A)]\ Renormalization relation. For any $k \ge 0$,
$$
{\mathbf M}_{k+1}(z)={\mathbf M}_{k-1}(z)({\mathbf M}_k(z))^{a_{k+1}},
$$
so, $t_{(k+1,0)} = t_{(k-1,a_{k})}$, $t_{(k,-1)} = t_{(k-1,a_k-1)}$. We also have the recursive relations
\begin{equation}\label{e.raymondrec}
t_{(k,p+1)} = t_{(k+1,0)} t_{(k,p)} - t_{(k,p-1)}
\end{equation}
for the traces.

\item[(B)]\ Structure of $\sigma_{(k,p)}(k \ge 0, p \ge -1)$. For $\lambda > 0$, $\sigma_{(k,p)}$ is made of $\deg
    t_{(k,p)}$ disjoint closed intervals.

\item[(C)]\ Invariant. Defining $\Lambda(x,y,z) = x^2 + y^2 + z^2 - xyz - 4$, we have
\begin{equation}\label{e.invariancecond}
\Lambda(t_{(k+1,0)}, t_{(k,p)}, t_{(k,p-1)}) = \lambda^2.
\end{equation}
Thus for any $k \in \mathbb{N}$, $p \geq 0$ and $\lambda > 4$,
\begin{equation}\label{empty}
\sigma_{(k+1,0)} \cap \sigma_{(k,p)} \cap \sigma_{(k,p-1)} = \emptyset.
\end{equation}

\item[(D)]\ Covering property. For any $k \ge 0$, $p \ge -1$,
\begin{equation}\label{tao}
\sigma_{(k,p+1)} \subset \sigma_{(k+1,0)} \cup \sigma_{(k,p)},
\end{equation}
then
$$
(\sigma_{(k+2,0)} \cup \sigma_{(k+1,0)}) \subset (\sigma_{(k+1,0)} \cup \sigma_{(k,0)}).
$$
Moreover,
$$
\Sigma = \bigcap_{k \ge 0} (\sigma_{(k+1,0)} \cup \sigma_{(k,0)}).
$$
\end{itemize}

The intervals of $\sigma_{(k,p)}$ will be called the {\em bands}. When we discuss only one of these bands, it is often denoted
as $B_{(k,p)}$. Property (B) also implies $t_{(k,p)}(z)$ is monotone on $B_{(k,p)}$, and
$$
t_{(k,p)}(B_{(k,p)}) = [-2,2].
$$
We call $t_{(k,p)}$ the {\em generating polynomial} of $B_{(k,p)}$.

$\{ \sigma_{(k+1,0)} \cup \sigma_{(k,0)} : k \ge 0 \}$ form a covering of $\Sigma$. However there are some repetitions between
$\sigma_{(k,0)} \cup \sigma_{(k-1,0)}$ and $\sigma_{(k+1,0)} \cup \sigma_{(k,0)}$. It is possible to choose coverings of
$\Sigma$ elaborately such that we can get rid of these repetitions, and this is described as follows:

\begin{defi}\label{def1}
For $\lambda > 4$, $k \ge 0$, we define three types of bands as follows:

$(k,{\rm I})$-type band: a band of $\sigma_{(k,1)}$ contained in a band of $\sigma_{(k,0)}$;

$(k,{\rm II})$-type band: a band of $\sigma_{(k+1,0)}$ contained in a band of $\sigma_{(k,-1)}$;

$(k,{\rm III})$-type band: a band of $\sigma_{(k+1,0)}$ contained in a band of $\sigma_{(k,0)}$.
\end{defi}

By property (B), \eqref{empty}, and \eqref{tao}, all three kinds of types of bands are well defined, and we call these bands
{\em spectral generating bands of order $k$}. Note that for order $0$, there is only one $(0,{\rm I})$-type band
$\sigma_{(0,1)} = [\lambda - 2, \lambda + 2]$ (the corresponding generating polynomial is $t_{(0,1)} = z - \lambda$), and only
one $(0,{\rm III})$-type band $\sigma_{(1,0)} = [-2,2]$ (the corresponding generating polynomial is $t_{(1,0)} = z$). They are
contained in $\sigma_{(0,0)} = (-\infty, \infty)$ with corresponding generating polynomial $t_{(0,0)} \equiv 2$. For
convenience, we call $\sigma_{(0,0)}$ the spectral generating band of order $-1$.

\smallskip

For any $k \ge -1$, denote by $\mathcal{G}_k$ the set of all spectral generating bands of order $k$. Then the intervals in
$\mathcal{G}_k$ are disjoint. Moreover,
\begin{itemize}
\item $(\sigma_{(k+2,0)} \cup \sigma_{(k+1,0)}) \subset \bigcup_{B \in \mathcal{G}_k} B \subset (\sigma_{(k+1,0)} \cup
    \sigma_{(k,0)})$, thus
\begin{equation}\label{struc-spec}
\Sigma = \bigcap_{k \ge 0} \bigcup_{B \in \mathcal{G}_k} B;
\end{equation}
\item any $(k,I)$-type band contains only one band in $\mathcal{G}_{k+1}$, which is of $(k+1,II)$-type; \item any
    $(k,II)$-type band contains $2 a_{k+1} + 1$ bands in $\mathcal{G}_{k+1}$, $a_{k+1}+1$ of which are of $(k+1,I)$-type
    and $a_{k+1}$ of which are of $(k+1,III)$-type; \item any $(k,III)$-type band contains $2 a_{k+1} - 1$ bands in
    $\mathcal{G}_{k+1}$, $a_{k+1}$ of which are of $(k+1,I)$-type and $a_{k+1}-1$ of which are of $(k+1,III)$-type.
\end{itemize}
Thus $\{ \mathcal{G}_k \}_{k \ge 0}$ forms a natural covering of the spectrum $\Sigma$. 
unique real number in $[0,1]$ satisfies 
by 

In the following we will give a coding for $\Sigma$. Let
$$
\mathcal E := \{ (I,II), (II,I), (II,III), (III,I), (III,III) \}
$$
be the admissible edges. To simplify the notation, we write
$$
e_{12} = (I,II), e_{21} = (II,I), e_{23} = (II,III), e_{31} = (III,I), e_{33} = (III,III).
$$
For each $n \in\N$, define
$$
\tau_{e}(n)=
\begin{cases}
1& e=e_{12}\\ n+1& e=e_{21}\\ n& e=e_{23}\\ n& e=e_{31}\\ n-1& e=e_{33}.
\end{cases}
$$
Then define
\begin{eqnarray*}
\mathcal{E}_n&=&\{ (e,\tau_e(n),l)\ :\ e\in\mathcal E,\ 1\le l\le \tau_e(n)\}\\ \mathcal{E}_n^\ast&=&\{
(e,\tau_e(n),l)\in\mathcal{E}_n\ :\  e\ne e_{21},e_{23} \}.
\end{eqnarray*}

For any $w = (e, \tau_e(n), l) \in \mathcal{E}_n$, we use the notation $e_w := e.$

For any $n, n^\prime \in \N$ and any $(e, \tau_e(n), l) \in \mathcal{E}_n$ and $(e^\prime, \tau_e^\prime(n^\prime), l^\prime)
\in \mathcal{E}_{n^\prime}$, we say $(e, \tau_e(n), l) (e^\prime, \tau_e^\prime(n^\prime), l^\prime)$ is {\it admissible} if
the end point of $e$ is the initial point of $e^\prime$. We denote it by $(e, \tau_e(n), l) \to (e^\prime,
\tau_e^\prime(n^\prime), l^\prime)$.

Define
$$
\Omega = \{ \omega \in \mathcal{E}_{a_1}^\ast \times \prod_{m=2}^\infty \mathcal{E}_{a_m} : \omega = \omega_1 \omega_2 \cdots
\text{ s.t. } \omega_{m} \to \omega_{m+1} \text{ for all } m \ge 1 \}.
$$
Define $\Omega_1 = \mathcal{E}_{a_1}^\ast$ and for $k \ge 2,$ define
$$
\Omega_k = \{ w \in \mathcal{E}_{a_1}^\ast \times \prod_{m=2}^k \mathcal{E}_{a_m} : w = w_1 \cdots w_k \text{ s.t. } w_{m} \to
w_{m+1} \text{ for all } 1 \le m < k \}.
$$
Define finally $\Omega_\ast = \bigcup_{k \ge 1} \Omega_k$.

Given any $w \in \Omega_k$, $1 \le m < k$, we write $w = u*v$ or $w = uv$, where $u = w_1 \cdots w_m$, $v = w_{m+1} \cdots
w_k$.

Given any $w \in \Omega_k$, define $B_w$ inductively as follows: Let $B_{I} = [\lambda - 2, \lambda + 2]$ be the unique
$(0,I)$-type band in $\mathcal{G}_0$ and let $B_{III} = [-2,2]$ be the unique $(0,III)$-type band in $\mathcal{G}_0$.

Let $w \in \Omega_1$ be given. If $w = (e_{12},1,1)$, then define $B_w$ to be the unique $(1,II)$-type band contained in
$B_I$. If $w = (e_{31},\tau_{e_{31}}(a_1),l)$, then define $B_w$ to be the unique $l$-th $(1,I)$-type band contained in
$B_{III}.$ If $w = (e_{33},\tau_{e_{33}}(a_1),l)$, then define $B_w$ to be the unique $l$-th $(1,III)$-type band contained in
$B_{III},$ where we order the bands of the same type from left to right.

Suppose $B_w$ has been defined for any $w \in \Omega_{k-1}$. Given $w \in \Omega_k$ and write $w = w^\prime \ast
(e,\tau_e(a_k),l)$. Then $w^\prime \in \Omega_{k-1}$. If $e = (T,T^\prime)$, define $B_w$ to be the unique $l$-th
$(k,T^\prime)$-type band inside $B_{w^\prime}$.

With these notations we can rewrite \eqref{struc-spec} as
$$
\Sigma = \bigcap_{k \ge 0} \bigcup_{w \in \Omega_k} B_w.
$$

Given $w \in \Omega_k$, we say $w$ has length $k$ and denote by $|w| = k$. If $B_w$ is of $(k,T)$ type, sometimes we also say
simply that $B_w$ has type $T$. We will write $h_w$ for the generating polynomial of $B_w$. Moreover, given a band $B$, we
will also write $t_B$ for its generating polynomial, that is, if $B = B_w$, then $t_B = h_w$.

\section{The Length of the Longest Band at a Given Level}

The following lemma is \cite[Lemma~3.6]{LQW14} (see also \cite[Proposition 3.3]{FLW11}).

\begin{lemma}\label{quo-deri}
Assume $\lambda\ge 20.$ Assume $w \in \Omega_k, wu \in \Omega_{k+1}$ with $u = (e,p,l)$. Let $h_w, h_{wu}$ be the generating
polynomials of $B_w, B_{wu}$, respectively. Then for any $z \in B_{wu}$, if $e \ne e_{12}$,
\begin{equation}\label{eq-1}
\frac{\lambda-8}{3} (p+1) \csc^2 \frac{l \pi}{p+1} \le \left| \frac{h_{wu}^\prime(z)}{h_w^\prime(z)} \right| \le
(\lambda+5)(p+1) \csc^2 \frac{l \pi}{p+1},
\end{equation}
if $e = e_{12}$, then $p=1$, we have
\begin{equation}\label{eq-2}
\left( \frac{2(\lambda-8)}{3} \right)^{a_{k+1}-1} \le \left| \frac{h_{wu}^\prime(z)}{h_w^\prime(z)} \right| \le \left(
2(\lambda+5) \right)^{a_{k+1}-1}.
\end{equation}
\end{lemma}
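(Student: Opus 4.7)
The plan is to exploit the Chebyshev-type structure hidden in the trace recursion \eqref{e.raymondrec}, together with the invariant \eqref{e.invariancecond}, which prevents certain cancellations on $B_w$. Suppose first that $B_w$ has type II or III, so that $h_w = t_{(k+1,0)}$. Iterating \eqref{e.raymondrec} with $x := h_w(z) = 2\cos\theta$ and initial data $t_{(k,0)}, t_{(k,-1)}$ yields the closed form
\[
t_{(k,j)}(z) \;=\; t_{(k,0)}(z)\,U_j(\cos\theta) \;-\; t_{(k,-1)}(z)\,U_{j-1}(\cos\theta),
\]
where $U_j(\cos\theta) = \sin((j+1)\theta)/\sin\theta$ is the Chebyshev polynomial of the second kind. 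Combining this with the three-term recursion one level higher, every child polynomial $h_{wu}$ with $e \in \{e_{21}, e_{23}, e_{31}, e_{33}\}$ can be rewritten as $t_{(k,0)}\, U_{p+s}(\cos\theta) - t_{(k,-1)}\, U_{p+s-1}(\cos\theta)$ for a shift $s \in \{0,1\}$ determined by the type of $B_{wu}$, where $p = \tau_e(a_{k+1})$.

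The crucial observation is that since $B_w$ is contained in a band of $\sigma_{(k,0)}$ or of $\sigma_{(k,-1)}$ (according to its type), one of $t_{(k,0)}, t_{(k,-1)}$ satisfies $|\cdot|\le 2$ on $B_w$, while \eqref{e.invariancecond} with $|x|\le 2$ forces the other to have magnitude of order $\lambda$. After picking the dominant one of $U_{p+s}, U_{p+s-1}$ according to which trace is large, the leading term in $h_{wu}$ is always $(\text{large trace}) \cdot U_p(\cos\theta)$, whose $p$ zeros in $(0,\pi)$ lie at $\theta_l := l\pi/(p+1)$ and mark the centers of the $p$ sub-bands. Differentiating the closed form via $dx/d\theta = -2\sin\theta$ then produces
\[
\frac{h_{wu}'(z)}{h_w'(z)} \;=\; \pm\,\frac{t_{(k,\ast)}(z)}{2\sin\theta}\,\frac{dU_p(\cos\theta)}{d\theta} \;+\; E(z),
\]
where $t_{(k,\ast)} \in \{t_{(k,0)}, t_{(k,-1)}\}$ is the large trace and $E(z)$ collects the remainder from differentiating $t_{(k,0)}, t_{(k,-1)}$ in $z$. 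Evaluated at $\theta_l$, the main term produces exactly the factor $(p+1)\csc^2\theta_l$ multiplied by a coefficient that \eqref{e.invariancecond} traps between $(\lambda-8)/3$ and $\lambda+5$, yielding \eqref{eq-1}. The case of $B_w$ of type I, where $h_w = t_{(k,1)}$, is handled identically after a one-step Chebyshev reindexing.

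For the edge $e = e_{12}$ in \eqref{eq-2}, the parent has type I, so $|t_{(k,0)}|, |t_{(k,1)}|\le 2$ on $B_w$, and \eqref{e.invariancecond} forces $t_{(k+1,0)}$ itself to be large on $B_w$, meaning the one-step Chebyshev parametrization is not directly available. Instead one writes $h_{wu} = t_{(k+2,0)} = t_{(k,a_{k+1})}$ and iterates \eqref{e.raymondrec} in the form $t_{(k,j+1)} = t_{(k+1,0)}\, t_{(k,j)} - t_{(k,j-1)}$ starting from $j = 1$. Implicit differentiation shows that each single step multiplies the derivative by a factor whose magnitude is pinned by $|t_{(k+1,0)}|$ on $B_w$, which lies between $2(\lambda-8)/3$ and $2(\lambda+5)$ by the same invariant analysis; telescoping $a_{k+1}-1$ such factors yields \eqref{eq-2}. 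The main obstacle throughout is showing that the remainder $E(z)$ (and its analogue in the $e_{12}$ iteration) is strictly subleading compared with the main Chebyshev contribution; the hypothesis $\lambda \ge 20$ enters precisely at this point to make the explicit constants $(\lambda - 8)/3$ and $\lambda + 5$ come out after a careful bookkeeping argument in which \eqref{empty} rules out the simultaneous near-vanishing of $t_{(k,0)}$ and $t_{(k,-1)}$.
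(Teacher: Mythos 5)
First, a remark on the comparison: the paper does not prove this lemma at all --- it is quoted from \cite[Lemma~3.6]{LQW14} (see also \cite[Proposition~3.3]{FLW11}), so your attempt has to be measured against the proof in those references. Your overall strategy is in fact the same as theirs: solve the three-term recursion \eqref{e.raymondrec} in Chebyshev form $t_{(k,j)} = t_{(k,0)}U_j(\cos\theta) - t_{(k,-1)}U_{j-1}(\cos\theta)$ with $2\cos\theta = t_{(k+1,0)}(z)$, use the invariant \eqref{e.invariancecond} together with \eqref{empty} to see that exactly one of $t_{(k,0)}, t_{(k,-1)}$ has modulus of order $\lambda$ on $B_w$, and read off the factor $(p+1)\csc^2\frac{l\pi}{p+1}$ from $\frac{d}{d\theta}U_p(\cos\theta)$ at the zero $\theta_l = l\pi/(p+1)$ of the dominant Chebyshev polynomial. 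Your bookkeeping of which $U_j$ dominates for each edge type, and of why $\tau_e(a_{k+1})$ counts the sub-bands, is correct.

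The genuine gap is the treatment of the remainder $E(z) = \bigl(t_{(k,0)}'U_j - t_{(k,-1)}'U_{j-1}\bigr)/h_w'$. You assert it is subleading and that ``$\lambda \ge 20$ enters precisely at this point,'' but no argument is given, and none is available from the one-step computation alone: to bound $E$ you need $|t_{(k,0)}'(z)|$ and $|t_{(k,-1)}'(z)|$ to be dominated by $|t_{(k+1,0)}'(z)|$ on $B_w$, which is a statement about all the \emph{previous} levels of the hierarchy. In \cite{LQW14, FLW11} this is exactly what the ``ladder''/``modified ladder'' induction (Proposition~6.3 of \cite{LQW14}) supplies, and the induction hypothesis must be carried for the full triple of traces, not just for generating polynomials --- note that $t_{(k,-1)} = t_{(k-1,a_k-1)}$ is generally \emph{not} the generating polynomial of any band containing $B_w$, so you cannot even invoke the lemma itself inductively to control $t_{(k,-1)}'$. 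The same missing input reappears in your $e_{12}$ telescoping, where ``each step multiplies the derivative by $|t_{(k+1,0)}|$'' is only the leading behaviour if $|t_{(k,j)}'| \gg |t_{(k,j-1)}'|$, again an inductive claim. A secondary but real issue: you evaluate the main term only at $\theta = \theta_l$, whereas \eqref{eq-1} must hold for every $z \in B_{wu}$; one still has to localize $B_{wu}$ inside a controlled neighbourhood of $\theta_l$ and bound the oscillation of $\csc^2\theta$ and of $\frac{d}{d\theta}U_p(\cos\theta)$ over it before the explicit constants $(\lambda-8)/3$ and $\lambda+5$ come out. As written, the proposal is a correct skeleton of the known proof with its load-bearing induction removed.
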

We remark that here $p = \tau_e(a_{k+1})$.

The above lemma has the following consequence:

\begin{lemma}\label{lem-bc}
Assume $\lambda \ge 20$. Write $t_1 = (\lambda-8)/3$ and $t_2 = 3(\lambda+5)$. Then for any $w = w_1 \cdots w_k \in \Omega_k$
with $w_i = (e_i, \tau_{e_i}(a_i), l_i)$, we have
\begin{equation}\label{bd-basic}
\prod_{e_i = e_{12}} \frac{1}{t_2^{a_i-1}} \cdot \prod_{e_i \ne e_{12}} \frac{1}{t_2 a_i} \cdot \prod_{e_i \ne e_{12}} \sin^2
\frac{l_i \pi}{\tau_{e_i}(a_i) + 1} \le |B_w| \le 4 \prod_{e_i = e_{12}} \frac{1}{t_1^{a_i-1}} \cdot \prod_{e_i \ne e_{12}}
\frac{1}{t_1 a_i}.
\end{equation}
\end{lemma}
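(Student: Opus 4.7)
The plan is to reduce the length estimate to a derivative estimate via the fact that $h_w$ is monotone on $B_w$ with $h_w(B_w)=[-2,2]$, and then telescope the derivative using Lemma \ref{quo-deri}.

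First, because $t_{(k,p)}$ is monotone on each band with image $[-2,2]$ (property (B)), the mean value theorem gives a point $z^\ast\in B_w$ with $|h_w'(z^\ast)|\,|B_w|=4$, so
\[
\frac{4}{\sup_{z\in B_w}|h_w'(z)|}\;\le\;|B_w|\;\le\;\frac{4}{\inf_{z\in B_w}|h_w'(z)|}.
\]
Next, for any $z\in B_w=B_{w_1\cdots w_k}$ (so $z$ lies in every nested band $B_{w_1\cdots w_i}$), telescope
\[
|h_w'(z)| \;=\; \prod_{i=1}^{k}\frac{|h_{w_1\cdots w_i}'(z)|}{|h_{w_1\cdots w_{i-1}}'(z)|},
\]
with the convention that the level-$0$ generating polynomials $t_{(0,1)}=z-\lambda$ and $t_{(1,0)}=z$ both have derivative $1$, so the $i=1$ factor is just $|h_{w_1}'(z)|$ (and the same ratio bounds apply to it). Since each $z$ lies in $B_{w_1\cdots w_i}$, Lemma \ref{quo-deri} bounds every factor pointwise: for $e_i=e_{12}$ the ratio lies in $\bigl[(2(\lambda-8)/3)^{a_i-1},\,(2(\lambda+5))^{a_i-1}\bigr]$, and for $e_i\ne e_{12}$ in $\bigl[\tfrac{\lambda-8}{3}(p_i{+}1)\csc^2\tfrac{l_i\pi}{p_i+1},\,(\lambda+5)(p_i{+}1)\csc^2\tfrac{l_i\pi}{p_i+1}\bigr]$, where $p_i=\tau_{e_i}(a_i)$. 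Because these bounds do not depend on $z$, they also bound $\sup_{B_w}|h_w'|$ and $\inf_{B_w}|h_w'|$.

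The remaining work is routine estimation with $t_1=(\lambda-8)/3$ and $t_2=3(\lambda+5)$. A case-check of the five edge types shows $a_i\le p_i+1\le 3a_i$ for $e_i\ne e_{12}$ and $a_i\ge 1$; also $2t_1\ge t_1$ (since $\lambda\ge 20$) and $2(\lambda+5)\le t_2$. Inserting these gives
\[
\inf_{B_w}|h_w'|\;\ge\;\prod_{e_i=e_{12}}t_1^{a_i-1}\prod_{e_i\ne e_{12}}t_1 a_i,
\qquad
\sup_{B_w}|h_w'|\;\le\;\prod_{e_i=e_{12}}t_2^{a_i-1}\prod_{e_i\ne e_{12}}t_2 a_i\,\csc^2\tfrac{l_i\pi}{p_i+1},
\]
and dividing $4$ by these quantities yields (\ref{bd-basic}) (we drop the harmless factor of $4$ on the lower side, which is absorbed since $\sin^2\le 1$).

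The only delicate point is the first step of the telescoping: strictly speaking, Lemma \ref{quo-deri} is formulated for $k\ge 1$, whereas here we also need the bound from the level-$0$ band to $B_{w_1}$. This is handled by the observation that the level-$0$ generators have constant derivative $1$, so $|h_{w_1}'|$ itself plays the role of the ratio and obeys the same inequalities (the proof in \cite{LQW14} is structurally identical in that step). Once this is noted, no further obstacle arises: the rest is bookkeeping with $\tau_{e_i}(a_i)$.
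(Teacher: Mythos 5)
Your proof is correct and follows essentially the same route as the paper: both express $|B_w|$ as $4/|h_w'|$ at a point guaranteed by the mean value property $h_w(B_w)=[-2,2]$, telescope the derivative through the nested bands using the ratio bounds of Lemma~\ref{quo-deri} (with the level-$0$ generators having derivative $1$), and finish with the same bookkeeping $a_i\le \tau_{e_i}(a_i)+1\le 3a_i$, $2t_1\ge t_1$, $2(\lambda+5)\le t_2$. The only cosmetic difference is that the paper evaluates the telescoped product at the single point $z_0$ via the modified-ladder formalism of \cite{FLW11,LQW14}, whereas you pass through $\sup$ and $\inf$ of $|h_w'|$ over $B_w$; both are equally valid.
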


\begin{proof}
Given $w \in \Omega_k$. Consider the initial ladder $(B_i)_{i=0}^k$ with $B_0$ the unique band in $\mathcal{G}_0$ containing
$B_w$ and $B_k = B_w$. Let $(\hat{B}_i)_{i=0}^m$ be the related modified ladder (cf.~\cite{FLW11}) and $(\hat{h}_i)_{i=0}^m$,
$(p_i)_{i=0}^{m-1}$ and $(l_i)_{i=0}^{m-1}$ be the corresponding generating polynomials, type sequence and index sequence.
Since $\hat h_m(\hat B_m) = [-2,2]$, there exists $z_0 \in \hat B_m$ such that $|\hat h_m^\prime(z_0)| |\hat B_m| = 4$. Notice
also that $|\hat{h}_{0}^\prime| \equiv 1$ (see the explanation after Definition~\ref{def1}), then by Proposition~6.3 of
\cite{LQW14}, the definition of modified ladder and (44) of \cite{LQW14}
\begin{eqnarray*}
|B_w| & = & |\hat{B}_m| = 4 \frac{|\hat h_0^\prime(z_0)|}{|\hat h_m^\prime(z_0)|} \le 4 \prod_{i=0}^{m-1} \frac{3 \sin^2
\frac{l_i \pi}{\tau_{e_i}(a_i) + 1}}{(\lambda-8)(\tau_{e_i}(a_i) + 1)} \\ & \le & 4 \prod_{e_j = e_{12}}
\frac{1}{(2t_1)^{a_j-1}} \cdot \prod_{e_j \ne e_{12}} \frac{1}{(\tau_{e_j}(a_j)+1)t_1} \\ & \le & 4 \prod_{e_j = e_{12}}
\frac{1}{t_1^{a_j-1}} \cdot \prod_{e_j \ne e_{12}} \frac{1}{a_jt_1}.
\end{eqnarray*}
Similarly, by using the fact that $\tau_e(n) + 1 \le 3n$, we have
\begin{eqnarray*}
|B_w| & \ge & 4 \prod_{i=0}^{m-1} \frac{\sin^2 \frac{l_i \pi}{p_i+1}}{(\lambda+5)(\tau_{e_i}(a_i) + 1)} \\ & = & 4 \prod_{e_j
= e_{12}} \frac{1}{(2(\lambda+5))^{a_j-1}} \cdot \prod_{e_j \ne e_{12}} \frac{\sin^2 \frac{l_j \pi}{\tau_{e_j}(a_j) +
1}}{(\tau_{e_j}(a_j)+1)(\lambda+5)} \\ & \ge & \prod_{e_j = e_{12}} \frac{1}{t_2^{a_j-1}} \cdot \prod_{e_j \ne e_{12}}
\frac{1}{a_j t_2} \cdot \prod_{e_j \ne e_{12}} \sin^2 \frac{l_j \pi}{\tau_{e_j}(a_j) + 1}.
\end{eqnarray*}
\end{proof}

Write
$$
\delta_k = (a_1 \cdots a_k)^{1/k}.
$$
For any $w \in \Omega_k$, define
$$
|w|_\ast := \# \{ 1 \le i \le k : e_i = e_{12} \ \ \text{ and }\ \ a_i = 1\}.
$$
We have the following corollary.

\begin{coro}
Assume $\lambda \ge 20$. Write $t_1 = (\lambda-8)/3$ and $t_2 = 3(\lambda+5)$. Then for any $w = w_1 \cdots w_k \in \Omega_k$
with $w_i = (e_i,\tau_{e_i}(a_i),l_i)$, we have
\begin{equation}\label{bd-basic-1}
|B_w| \le 48^k \delta_k^{-k} \lambda^{|w|_{\ast}-k}.
\end{equation}
If moreover we take $l_i = 1$ when $a_i = 1,2$ and $l_i = \lfloor (\tau_{e_i}(a_i) + 1)/2 \rfloor$ when $a_i \ge 3$, then
\begin{equation}\label{bd-basic-2}
|B_w| \ge 8^{-k}\delta_k^{-k} \lambda^{-k} \prod_{e_i=e_{12}} \frac{a_i}{t_2^{a_i-2}}.
\end{equation}
\end{coro}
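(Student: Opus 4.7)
The strategy is to deduce both bounds from Lemma \ref{lem-bc} by a per-index analysis over $i \in \{1, \ldots, k\}$, splitting the product into three cases: (i) $e_i \ne e_{12}$, (ii) $e_i = e_{12}$ with $a_i = 1$, and (iii) $e_i = e_{12}$ with $a_i \ge 2$. Both right-hand sides of \eqref{bd-basic-1} and \eqref{bd-basic-2} factor cleanly over $i$ once one unfolds $\delta_k^{-k} = \prod_i a_i^{-1}$ and distributes $\lambda^{|w|_\ast - k}$ or the product $\prod_{e_i = e_{12}} a_i/t_2^{a_i-2}$ across the indices according to which case each $i$ belongs to.

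For the upper bound, I rewrite the target as
\[
48^k \delta_k^{-k} \lambda^{|w|_\ast - k} = \prod_{i=1}^k 48\, a_i^{-1} \lambda^{\epsilon_i - 1},
\]
where $\epsilon_i = 1$ if $e_i = e_{12}$ and $a_i = 1$ (and $0$ otherwise), so that $\sum_i \epsilon_i = |w|_\ast$. Pairing factor by factor against the upper bound of Lemma \ref{lem-bc} reduces the claim to three elementary per-index estimates with a common constant $C \le 12$: case (i) is $\lambda \le C t_1$ (equivalently $C \ge 3\lambda/(\lambda - 8)$); case (ii) is $1 \le C$; case (iii) is $a_i \lambda \le C t_1^{a_i - 1}$, verified at $a_i = 2$ by $6\lambda \le C(\lambda - 8)$ and then propagated by induction on $a_i$ using $t_1 \ge 4$. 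All three hold for $\lambda \ge 20$ with $C = 12$, and the leading factor $4$ from the lemma is absorbed using $4 \cdot 12^k \le 48^k$ for $k \ge 1$.

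For the lower bound, I first verify that with the prescribed choice of $l_i$ one has $\sin^2\bigl(l_i \pi / (\tau_{e_i}(a_i) + 1)\bigr) \ge 1/2$ for every $i$ with $e_i \ne e_{12}$. A direct case check over $a_i \in \{1,2\}$ and the admissible edges $e_{21}, e_{23}, e_{31}, e_{33}$ shows the worst case is $(e_i, a_i) = (e_{21}, 2)$, where the value is exactly $1/2$; for $a_i \ge 3$, the centred choice with $N := \tau_{e_i}(a_i) + 1 \ge a_i \ge 3$ yields $\sin^2 \ge \cos^2(\pi/(2N)) \ge \cos^2(\pi/6) = 3/4$. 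Substituting the lemma's lower bound, cancelling the common $t_2$- and $a_i$-factors, and using $\delta_k^{-k} \prod_i a_i = 1$ reduces the desired inequality to
\[
\prod_{e_i \ne e_{12}} \sin^2 \frac{l_i \pi}{\tau_{e_i}(a_i) + 1} \ge \left(\frac{t_2}{8\lambda}\right)^k.
\]
Since $t_2/(8\lambda) = 3(\lambda + 5)/(8\lambda) \le 1/2$ for $\lambda \ge 15$, while the left side is at least $(1/2)^{k - k_1} \ge (1/2)^k$ by the sine bound (with $k_1 = \#\{i : e_i = e_{12}\}$), the inequality holds.

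The main obstacle is the algebraic bookkeeping: one must carefully track the exponents of $t_1, t_2, \lambda$, and $a_i$ across the three cases so that, after unfolding $\delta_k^{-k}$ and the split products $\prod_{e_i = e_{12}}$ and $\prod_{e_i \ne e_{12}}$, everything collapses to the per-index form above and the specific constants $48$ and $8$ become compatible with the slack available from the assumption $\lambda \ge 20$.
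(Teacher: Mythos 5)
Your proof is correct and follows essentially the same route as the paper: both arguments are direct algebraic manipulations of the two bounds in Lemma \ref{lem-bc}, using the case analysis on $a_i\in\{1,2,\ge 3\}$ together with $\lambda/6\le t_1\le\lambda$ for the upper bound, and the observation that the prescribed choice of $l_i$ forces $\sin^2\bigl(l_i\pi/(\tau_{e_i}(a_i)+1)\bigr)\ge 1/2$ together with $t_2\le 4\lambda$ for the lower bound. The only difference is organizational (you compare factor by factor with a single constant $C=12$, while the paper regroups the products as $\prod_{e_i=e_{12}}a_i/t_1^{a_i-2}\cdot\prod_{i=1}^k(t_1a_i)^{-1}$), and the resulting constants $48^k$ and $8^{-k}$ come out identically.
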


\begin{proof}
First, by \eqref{bd-basic} we have
$$
|B_w| \le 4 \prod_{e_i = e_{12}} \frac{1}{t_1^{a_i-1}} \cdot \prod_{e_i \ne e_{12}} \frac{1}{t_1 a_i} = 4 \prod_{e_i = e_{12}}
\frac{a_i}{t_1^{a_i-2}} \cdot \prod_{i=1}^n \frac{1}{t_1 a_i}.
$$
Notice that
$$
\frac{a_i}{t_1^{a_i-2}}\ \
\begin{cases}
=t_1& a_i=1\\ =2&a_i=2\\ \le 2&a_i\ge 3
\end{cases}.
$$
Since $\lambda \ge 20$, we have $\lambda/6 \le t_1 \le \lambda$. Consequently we get
$$
|B_w| \le 48^k \delta_k^{-k} \lambda^{|w|_{\ast}-k}.
$$

In the following we take $l_i = 1$ for $a_i = 1,2$ and $l_i = \lfloor (\tau_{e_i}(a_i) + 1)/2 \rfloor$ for $a_i \ge 3$. Then
it is easy to show that $\pi/4 \le l_i \pi/(\tau_{e_i}(a_i) + 1) \le \pi/2$. By \eqref{bd-basic},
$$
|B_w| \ge 2^{-k} \prod_{e_i = e_{12}} \frac{1}{t_2^{a_i-1}} \cdot \prod_{e_i \ne e_{12}} \frac{1}{t_2 a_i} = 2^{-k} \prod_{e_i
= e_{12}} \frac{a_i}{t_2^{a_i-2}} \cdot \prod_{i=1}^k \frac{1}{t_2 a_i}.
$$
Since $\lambda \ge 20$ we have $t_2 \le 4 \lambda$, thus we conclude that
$$
|B_w| \ge 8^{-k} \delta_k^{-k} \lambda^{-k} \prod_{e_i = e_{12}} \frac{a_i}{t_2^{a_i-2}}.
$$
\end{proof}

We have the following estimates for the maximal length of the bands of order $k$.

\begin{prop}\label{p.maxband}
Consider the integer sequence $a_1 \cdots a_k$. It may be divided by $1$'s into several segments that do not contain $1$'s,
that is, we can write
$$
a_1 \cdots a_k = A_1 1^{m_1} A_2 1^{m_2} A_3 \cdots A_s 1^{m_s} A_{s+1},
$$
where for any $i = 1, \cdots, s+1$, $A_i = a_m a_{m+1} \cdots a_{m+l}$ for some $m > 0$, $l \ge 0$, $a_j > 1$ for any $m \le j
\le m+l$, and $m_i \ge 1$. Note that if $a_1 = 1$, then $A_1 = \emptyset$. Assume $|B_{\hat w}| = \max \{ |B_w| : w \in
\Omega_n \}$, then
\begin{equation}\label{main-esti}
8^{-k} \cdot \delta_k^{-k} \cdot \lambda^{-k+\sum_{j=1}^s \lfloor (m_j+1)/2 \rfloor} \le |B_{\hat w}| \le 48^k \cdot
\delta_k^{-k} \cdot \lambda^{-k+\sum_{j=1}^s \lfloor (m_j+1)/2 \rfloor}.
\end{equation}
\end{prop}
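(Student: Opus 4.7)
The plan is to deduce both inequalities in \eqref{main-esti} from the Corollary, by identifying the maximum of the combinatorial quantity $|w|_\ast$ over $w\in\Omega_k$ with $S:=\sum_{j=1}^s\lfloor(m_j+1)/2\rfloor$, and then exhibiting an optimal admissible word whose $e_{12}$-edges all land on positions with $a_i=1$. The exponent of $\lambda$ governing both \eqref{bd-basic-1} and the right-hand side of \eqref{bd-basic-2} under this restriction is precisely $|w|_\ast-k$, so once these two tasks are accomplished the proposition follows immediately.

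For the upper bound, \eqref{bd-basic-1} reduces everything to showing $|w|_\ast\le S$ for every admissible $w$. Since $e_{12}=(I,II)$ ends at state $II$ while the start of $e_{12}$ is state $I$, no two consecutive letters of $w$ can both equal $e_{12}$. The positions $i$ with $a_i=1$ are exactly those inside the blocks $1^{m_j}$, so inside such a block of length $m_j$ there are at most $\lceil m_j/2\rceil=\lfloor(m_j+1)/2\rfloor$ occurrences of $e_{12}$. Summing in $j$ gives $|w|_\ast\le S$, hence $|B_{\hat w}|\le 48^k\delta_k^{-k}\lambda^{S-k}$.

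For the lower bound, the plan is to construct a specific $w_0\in\Omega_k$ with $|w_0|_\ast=S$ and with $e_i\ne e_{12}$ at every position with $a_i\ge 2$, then invoke \eqref{bd-basic-2} with the $l_i$'s prescribed in the Corollary. Under these two conditions
$$
\prod_{e_i=e_{12}}\frac{a_i}{t_2^{a_i-2}}=t_2^{S}\ge\lambda^{S},
$$
because $t_2=3(\lambda+5)\ge\lambda$. Thus \eqref{bd-basic-2} gives $|B_{w_0}|\ge 8^{-k}\delta_k^{-k}\lambda^{S-k}$, and since $|B_{\hat w}|\ge|B_{w_0}|$ the lower bound follows.

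The hard part is the explicit construction of $w_0$: an admissible walk in the three-state graph on $\{I,II,III\}$ that simultaneously packs $e_{12}$'s maximally inside every block $1^{m_j}$ and avoids $e_{12}$ at every position in the $A_j$ pieces. Inside $1^{m_j}$ the optimum is achieved by alternation: for odd $m_j$, start at $I$ with $(e_{12}e_{21})^{(m_j-1)/2}e_{12}$, ending at $II$; for even $m_j$, start at $III$, open with $e_{31}$ and alternate as before, again ending at $II$. Because every $1^{m_j}$ exits at $II$, each $A_j$ with $2\le j\le s$ (automatically non-empty by construction of the decomposition) starts from $II$ and must be routed, using only $e_{21},e_{23},e_{31},e_{33}$, to the entry state required by $1^{m_j}$: the patterns $e_{21}$ (if $\ell_j=1$) and $e_{23}e_{33}^{\ell_j-2}e_{31}$ (if $\ell_j\ge 2$) terminate at $I$, while $e_{23}e_{33}^{\ell_j-1}$ terminates at $III$. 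The boundary pieces $A_1$ (subject to $\omega_1\in\mathcal{E}_{a_1}^\ast$, and possibly empty) and $A_{s+1}$ (possibly empty) are handled analogously, starting at state $III$ with $e_{33}^{\ell_1-1}e_{31}$ or $e_{33}^{\ell_1}$ as dictated by the parity of $m_1$, and by any $e_{12}$-free completion at the right end. Finally, I will verify that the prescription $l_i=1$ when $a_i\le 2$ and $l_i=\lfloor(\tau_{e_i}(a_i)+1)/2\rfloor$ when $a_i\ge 3$ yields a valid admissible element of $\Omega_k$: the only admissibility constraint that could fail is $\tau_{e_{33}}(1)=0$, but by design $e_{33}$ is never used at a position with $a_i=1$.
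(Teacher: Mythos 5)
Your proposal is correct and follows essentially the same route as the paper: the upper bound via the combinatorial estimate $|w|_\ast\le\sum_j\lfloor(m_j+1)/2\rfloor$ (no two consecutive $e_{12}$'s) combined with \eqref{bd-basic-1}, and the lower bound via an explicit admissible word whose $e_{12}$-edges alternate maximally inside each block $1^{m_j}$ and never occur on positions with $a_i\ge 2$, fed into \eqref{bd-basic-2}. Your case analysis for routing through the $A_j$'s (including the $\tau_{e_{33}}(1)=0$ obstruction and the $\mathcal{E}_{a_1}^\ast$ constraint on the first letter) matches the paper's construction of $\tilde w$.
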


\begin{proof}
Given any $w \in \Omega_n$, let us show that
\begin{equation}\label{length}
|w|_\ast \le \sum_{j=1}^s \left\lfloor \frac{m_j+1}{2} \right\rfloor.
\end{equation}
Indeed by the definition of admissibility we know that $e_{12} e_{12}$ is not admissible. Thus for each block $1^{m_j} =
a_{m+1} \cdots a_{m+m_j}$,
$$
\# \{ m+1 \le j \le m+m_j : e_j = e_{12} \} \le \left\lfloor \frac{m_j+1}{2} \right\rfloor.
$$
Consequently \eqref{length} holds.

Now by \eqref{bd-basic-1} we get
$$
|B_{w}| \le 48^k \cdot \delta_k^{-k} \cdot \lambda^{-k+\sum_{j=1}^s \lfloor (m_j+1)/2 \rfloor}.
$$
In particular, this inequality holds for $B_{\hat w}$. Thus we get the second inequality of \eqref{main-esti}.

Next we will construct a special $\tilde w \in \Omega_n$. Write  $|A_j| = n_j$ and $\tau_m = \sum_{j=1}^m (n_j+m_j).$ At first
we define $e_j, j = 1, \cdots, \tau_s+n_{s+1}$ by induction.

For $j = 1, \cdots, \tau_1$, we discuss four cases:

Case 1: $n_1=0$ and $m_1$ is odd. Define
$$
e_1 \cdots e_{m_1} = e_{12} (e_{21} e_{12})^{(m_1-1)/2}.
$$

Case 2: $n_1=0$ and $m_1$ is even. Define
$$
e_1 \cdots e_{m_1} = e_{31} e_{12} (e_{21} e_{12})^{m_1/2-1}.
$$

Case 3: $n_1>0$ and $m_1$ is odd. Define
$$
e_1 \cdots e_{n_1} \cdot e_{n_1+1} \cdots e_{n_1+m_1} = e_{33}^{n_1-1} e_{31} \cdot e_{12} (e_{21} e_{12})^{(m_1-1)/2}.
$$

Case 4: $n_1>0$ and $m_1$ is even. Define
$$
e_1 \cdots e_{n_1} \cdot e_{n_1+1} \cdots e_{n_1+m_1} = e_{33}^{n_1} \cdot e_{31} e_{12} (e_{21}e_{12})^{m_1/2-1}.
$$

Assume $e_j$ is already defined for $j \le \tau_{i-1}$. Now define $e_j$, $j = \tau_{i-1}+1, \cdots, \tau_{i}$ as follows.

Case 1: $n_i=1$ and $m_{i}$ is odd. Define
$$
e_{\tau_{i-1}+1} \cdots e_{\tau_{i-1}+n_{i}} \cdot e_{\tau_{i-1}+n_{i}+1} \cdots e_{\tau_{i}} = e_{21} \cdot e_{12}
(e_{21}e_{12})^{(m_1-1)/2}.
$$

Case 2: $n_i \ge 2$ and $m_{i}$ is odd. Define
$$
e_{\tau_{i-1}+1} \cdots e_{\tau_{i-1}+n_{i}} \cdot e_{\tau_{i-1}+n_{i}+1} \cdots e_{\tau_{i}} = e_{23} e_{33}^{n_i-2} e_{31}
\cdot e_{12} (e_{21} e_{12})^{(m_1-1)/2}.
$$

Case 3: $m_i$ is even. Define
$$
e_{\tau_{i-1}+1} \cdots e_{\tau_{i-1}+n_{i}} \cdot e_{\tau_{i-1}+n_{i}+1} \cdots e_{\tau_{i}} = e_{23} e_{33}^{n_i-1} \cdot
e_{31} e_{12} (e_{21} e_{12})^{m_i/2-1}.
$$
Thus by induction we have defined $e_j$ for $j = 1, \cdots, \tau_s$. Finally, if $n_{s+1}>0$, then define
$$
e_{\tau_s+1} \cdots e_{\tau+n_{s+1}} = e_{23} e_{33}^{n_{s+1}-1}.
$$
Now define $\tilde w_j = (e_j, \tau_{e_j}(a_j), l_j)$ such that
$$
l_j=
\begin{cases}
1& a_j=1,2\\ \lfloor (\tau_{e_j}(a_j)+1)/2 \rfloor & a_j \ge 3.
\end{cases}
$$
Then for $\tilde w = \tilde w_1 \cdots \tilde w_k$, by construction it is seen that
$$
|\tilde w|_\ast= \sum_{j=1}^s \left\lfloor \frac{m_j+1}{2} \right\rfloor.
$$
Moreover $e_j = e_{12}$ only if $a_j = 1$. Since $t_2 \ge \lambda$, by \eqref{bd-basic-2} we have
$$
|B_{\tilde w}| \ge 8^{-k} \delta_k^{-k} \lambda^{-k} \lambda^{\sum_{j=1}^s \lfloor (m_j+1)/2 \rfloor}.
$$
Since we have $|B_{\hat w}| \ge |B_{\tilde w}|$, the first inequality of \eqref{main-esti} holds.
\end{proof}

\begin{exam}
{\rm 1) If $a_j \ge 2$, then the maximal length is about
$$
(a_1 \cdots a_k)^{-1} \lambda^{-k}.
$$

2) If $a_j = 1$, then the maximal length is about
$$
\lambda^{-k/2}.
$$

3) If $a_{2j} = 1$ and $a_{2j+1} \ge 2$, then the maximal length is about
$$
(a_1 \cdots a_k)^{-1} \lambda^{-k/2}.
$$

4) If $a_1 a_2 \cdots = 112112112 \cdots $, then the maximal length is about
$$
(a_1 \cdots a_k)^{-1} \lambda^{-2k/3}\simeq 2^{-k/3}\lambda^{-2k/3}.
$$

5) If $a_1 a_2 \cdots = 111212111212111212\cdots $, then the maximal length is about
$$
(a_1 \cdots a_k)^{-1} \lambda^{-k/2}\simeq 2^{-k/3}\lambda^{-k/2}.
$$

}
\end{exam}

\begin{remark}
Notice that the examples above show that the asymptotics of the the length of the longest band is not a function of the
frequencies of the continued fraction coefficients in general.
\end{remark}

\section{The Connection Between the Longest Spectral Generating Bands and the Upper Transport Exponent}

For any $k \ge -1$, let us write $x_k(z) = \tr M_k(z)$ and $y_k(z) = \tr [M_{k-1}(z) M_k(z)]$. Notice that $x_k(z) =
t_{(k+1,0)}(z)$ and $y_k(z) = t_{(k,1)}(z)$. The following lemma was stated as \cite[Lemma~2]{M10}.

\begin{lemma}\label{l.escape}
Suppose $\delta \ge 0$ and $z \in \C$. A necessary and sufficient condition for $\{ x_k(z) \}_{k \ge -1}$ to be unbounded is
that there exists $k_0 \ge 0$ such that
\begin{equation}\label{e.escape}
|x_{k_0 - 1}(z)| \le 2 + \delta, \quad |x_{k_0}(z)| > 2 + \delta, \quad |y_{k_0}(z)| > 2 + \delta.
\end{equation}
In this case, this $k_0$ is unique, and with
$$
G_k = G_{k-1} + a_k G_{k-2} , \quad G_0 = G_{-1} = 1,
$$
we have
$$
|x_{k+1}(z)| \ge |y_k(z)| \ge (1 + \delta)^{G_{k - k_0}} + 1, \quad \text{ for every } k > k_0.
$$
\end{lemma}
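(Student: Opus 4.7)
The natural approach exploits two structural facts from Section~\ref{structure}: the Chebyshev-like recursion from property~(A) and the Fricke--Vogt invariant \eqref{e.invariancecond}. First fix $k$ and consider the auxiliary polynomial sequence $v_p:=t_{(k,p)}$, which by property~(A) satisfies $v_{p+1}=x_k v_p-v_{p-1}$ with initial data $v_0=x_{k-1}$, $v_1=y_k$, and the identifications $v_{a_{k+1}}=x_{k+1}$ and $y_{k+1}=x_k x_{k+1}-v_{a_{k+1}-1}$. Specializing \eqref{e.invariancecond} to $p=1$ gives at every level
\[
x_k^2+y_k^2+x_{k-1}^2-x_k y_k x_{k-1}=4+\lambda^2,
\]
and this will be the main tool for comparing the magnitudes of $x_{k-1},x_k,y_k$.

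For sufficiency and the quantitative bound, assume the escape condition at $k_0$. I would prove by induction on $n\ge 0$ the triple claim $|y_{k_0+n}|\ge |x_{k_0+n-1}|$, $|y_{k_0+n}|\ge(1+\delta)^{G_n}+1$, and $|x_{k_0+n+1}|\ge |y_{k_0+n}|$. The engine is a one-step Chebyshev growth claim: whenever $|x_k|\ge 2+\delta$ and $|v_1|\ge |v_0|$, the identity $v_{p+1}=x_k v_p-v_{p-1}$ together with monotonicity yields $|v_p|\ge(1+\delta)|v_{p-1}|$ for $1\le p\le a_{k+1}$, and in particular $|x_{k+1}|=|v_{a_{k+1}}|\ge(1+\delta)^{a_{k+1}-1}|y_k|$. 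Using $|v_{a_{k+1}-1}|\le |x_{k+1}|$ from the same monotonicity and substituting into $y_{k+1}=x_k x_{k+1}-v_{a_{k+1}-1}$ gives the one-step gain $|y_{k+1}|\ge(|x_k|-1)|x_{k+1}|\ge(1+\delta)^{a_{k+1}}|y_k|$; iterating and feeding the continued-fraction coefficients into the recursion defining $G_n$ produces the stated bound.

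The subtle part is propagating the comparison $|y_{k+1}|\ge |x_k|$ needed to restart the Chebyshev stage at level $k+1$, since the a~priori bound $|x_{\cdot}|\le 2+\delta$ is only given at $k_0-1$. Here the invariant is decisive: viewed as a quadratic in $x_k$, the identity $\Lambda(x_{k+1},y_{k+1},x_k)=\lambda^2$ has one root of size $\sim x_{k+1}y_{k+1}$ and a second root of size $\sim x_{k+1}/y_{k+1}+y_{k+1}/x_{k+1}$; the dynamically realized value must be the small one, because the large root would have forced an even earlier escape and thus contradict the minimality of $k_0$. Once $|y_{k+1}|$ is large and $|x_{k+1}|\ge 2+\delta$, this small-root bound gives $|x_k|<|y_{k+1}|$ and closes the induction; the same quadratic analysis simultaneously yields the uniqueness of $k_0$.

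For the necessity, suppose that no $k_0$ satisfies \eqref{e.escape}. Then for every $k$ at least one of $|x_k|,|y_k|$ is at most $2+\delta$ (or $|x_{k-1}|>2+\delta$), and in every case the invariant gives an a~priori bound for the next iterate in terms of $2+\delta$ and $\lambda$; a short induction then shows $\{x_k\}$ is bounded. The main technical obstacle throughout is the bookkeeping with this quadratic in the inductive step: one must consistently select the correct root so that the comparison $|y_{k+1}|\ge |x_k|$ survives every iteration. Once that is in hand the Chebyshev iteration is routine, and accumulating the per-step factors $(1+\delta)^{a_{k+1}}$ against the recursion $G_n=G_{n-1}+a_n G_{n-2}$ yields $|y_k|\ge(1+\delta)^{G_{k-k_0}}+1$ as claimed.
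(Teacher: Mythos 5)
The statement you are trying to prove is actually false, and the paper does not prove it: Lemma~\ref{l.escape} is quoted verbatim from \cite{M10}, and the remark immediately following it supplies an explicit counterexample to the ``necessary'' direction. Take $\lambda = 3$, any $\alpha$ with $a_1 = 3$, $a_2 = 1$, and $z = 3$. Then $x_{-1}(z)=2$, $x_0(z)=3$, $y_0(z)=0$, $x_1(z)=6$, $y_1(z)=-16$, $x_2(z)=-16$, and the sequence $\{x_k(z)\}$ escapes to infinity, yet no $k_0$ satisfies \eqref{e.escape} (at $k_0=0$ the condition $|y_0|>2$ fails because $y_0=0$, and for $k_0\ge 1$ the condition $|x_{k_0-1}|\le 2$ fails). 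The paper keeps only the sufficiency half (repackaged as Lemma~\ref{modifymarin}) and replaces the criterion by Lemma~\ref{l.complexenergyBISTlemma}, where the hypothesis $|y_{k_0}|>2+\delta$ is changed to $|x_{k_0+1}|>2+\delta$; with that modification both directions do hold.

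The concrete flaw in your necessity argument is the step ``in every case the invariant gives an a~priori bound for the next iterate.'' The negation of \eqref{e.escape} only tells you that at the first index $k$ with $|x_{k-1}|\le 2+\delta<|x_k|$ one must have $|y_k|\le 2+\delta$. Feeding $|x_{k-1}|\le 2+\delta$ and $|y_k|\le 2+\delta$ into $\Lambda(x_{k-1},x_k,y_k)=\lambda^2$ bounds $|x_k|$ only by some constant $C_{\lambda,\delta}$, which is in general strictly larger than $2+\delta$ (in the counterexample $|x_0|=3$). That weaker bound does not stop the intra-block recursion $t_{(k,p+1)}=x_k t_{(k,p)}-t_{(k,p-1)}$ from growing geometrically with ratio roughly $|x_k|-1>1$, so $x_{k+1}=t_{(k,a_{k+1})}$ can already be large and the escape mechanism of your own sufficiency argument kicks in one level later --- exactly what happens in the counterexample. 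Your sufficiency half and the quantitative bound via the one-step gain $|y_{k+1}|\ge(|x_k|-1)|x_{k+1}|$ are sound and essentially coincide with the Claim in the paper's proof of Lemma~\ref{modifymarin}, but the lemma as stated cannot be proved, and you should instead work with the corrected criterion of Lemma~\ref{l.complexenergyBISTlemma}.
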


\begin{remark}
Since this lemma is important in what follows, we need to comment on it and its proof. While it was shown that the condition
is sufficient in \cite{M10}, it wasn't shown there that it is necessary. This part of the proof is simply missing in
\cite{M10}. Moreover, the following example shows that the condition is in fact not necessary.

Consider $\lambda = 3$, any frequency $\alpha$ with $a_1 = 3$ and $a_2 = 1$, and $z = 3$. Then, one computes that
$$
x_{-1}(z) = 2, \; x_0(z) = 3, \; y_0(z) = 0, \; x_1(z) = 6, \; y_1(z) = -16, \; x_2(z) = -16.
$$
Since $z$ is real, \cite[Proposition~4]{BIST89} applies and yields that starting at $k = 0$, the super-exponential escape
kicks in, no matter how the subsequent continued fraction coefficients are chosen. In particular, we have $|x_k(z)| > 2$ for
every $k \ge 0$. This shows that, while $\{ x_k(z) \}_{k \ge -1}$ is unbounded, there exists no $k_0 \ge 0$ such that
\eqref{e.escape} holds.
\end{remark}

We modify Lemma \ref{l.escape} as follows.

\begin{lemma}\label{modifymarin}
Let $\delta \ge 0$ and $z \in \C$. Suppose that there exists $k_0 \ge 0$ such that
\begin{equation}\label{e.escape.new}
|x_{k_0}(z)| > 2 + \delta, \quad |y_{k_0}(z)| > 2 + \delta, \quad |y_{k_0}(z)| > |x_{k_0-1}(z)|.
\end{equation}
Let
\begin{equation}\label{e.MarinGkdef}
G^{(k_0)}_{0}=1,\quad G^{(k_0)}_{1} = a_{k_0+1}, \quad G^{(k_0)}_{k+1} = a_{k_0 + k + 1} G^{(k_0)}_{k} +  G^{(k_0)}_{k-1} , \;
k \ge 1,
\end{equation}
Then, we have
\begin{equation}\label{cont_frac}
|x_{k_0+k}(z)| > (1 + \delta)^{G^{(k_0)}_{k }} + 1, \quad \text{ for every } k \ge 0.
\end{equation}
\end{lemma}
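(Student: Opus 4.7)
The proof proceeds by induction on $k\ge 0$, following the structure of Marin's argument for Lemma~\ref{l.escape} but with a strengthened inductive invariant to compensate for the weaker hypothesis \eqref{e.escape.new}. The algebraic workhorse is the renormalization identity $\mathbf{M}_{j+1} = \mathbf{M}_{j-1}\mathbf{M}_j^{a_{j+1}}$ from property (A), combined with the Cayley--Hamilton identity $\mathbf{M}_j^2 = x_j\mathbf{M}_j - I$ for the unimodular transfer matrices. Iterating the latter gives $\mathbf{M}_j^n = S_{n-1}(x_j)\mathbf{M}_j - S_{n-2}(x_j)I$ for $n\ge 0$, where the $S_n$ are defined by $S_{-1}=0$, $S_0=1$, and $S_{n+1}(x) = xS_n(x) - S_{n-1}(x)$. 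Taking traces with $\mathbf{M}_{j-1}$ yields the scalar trace recursions
\begin{align*}
x_{j+1} &= S_{a_{j+1}-1}(x_j)\,y_j \;-\; S_{a_{j+1}-2}(x_j)\,x_{j-1},\\
y_{j+1} &= S_{a_{j+1}}(x_j)\,y_j \;-\; S_{a_{j+1}-1}(x_j)\,x_{j-1},
\end{align*}
which drive the entire induction.

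I would then record two elementary bounds on $S_n(x)$ valid for $|x|>2$: (a) all $S_n(x)$, $n\ge 0$, share a common sign, so $\{|S_n(x)|\}_{n\ge 0}$ is strictly increasing; (b) $|S_n(x)|/|S_{n-1}(x)| \ge |x|-1$ for $n\ge 1$, hence $|S_n(x)| \ge (|x|-1)^n$. Both follow by straightforward induction from the three-term recursion. The induction on $k$ will then simultaneously maintain the three coupled invariants
\begin{align*}
(\mathrm{H1})_k &:\quad |x_{k_0+k}| > (1+\delta)^{G^{(k_0)}_k}+1,\\
(\mathrm{H2})_k &:\quad |y_{k_0+k}| > (1+\delta)^{G^{(k_0)}_k}+1,\\
(\mathrm{H3})_k &:\quad |y_{k_0+k}| > |x_{k_0+k-1}|,
\end{align*}
which at $k=0$ reduce precisely to the three conditions of \eqref{e.escape.new}. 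For the inductive step $k\to k+1$, substitute into the trace recursions at $j = k_0+k$ with $a = a_{k_0+k+1}$: fact (b) together with $(\mathrm{H1})_k$ gives $|S_{a-1}(x_j)| \ge (|x_j|-1)^{a-1} \ge (1+\delta)^{(a-1)G^{(k_0)}_k}$; combined with $(\mathrm{H2})_k$ this yields a leading term in $x_{j+1}$ of size at least $(1+\delta)^{aG^{(k_0)}_k}$; and $(\mathrm{H3})_k$ together with the ratio bound $|S_{a-2}|/|S_{a-1}|\le(|x_j|-1)^{-1}$ ensures that the subtracted term is strictly dominated. Invoking the defining recursion $G^{(k_0)}_{k+1} = aG^{(k_0)}_k + G^{(k_0)}_{k-1}$ then delivers $(\mathrm{H1})_{k+1}$; a parallel calculation using the recursion for $y_{j+1}$ (with $S_a$ in place of $S_{a-1}$, gaining one more factor of hyperbolicity) produces $(\mathrm{H2})_{k+1}$ and $(\mathrm{H3})_{k+1}$.

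The main obstacle is the cancellation in the trace recursion for $x_{j+1}$: it is a difference of two a priori large quantities, and the naive lower bound $|S_n(x)| \ge (|x|-1)^n$ leaves only a single $(1+\delta)$-factor margin between the leading and subtracted terms at the base step, not quite enough to absorb the additive ``$+1$'' in the target bound. Marin's original argument for Lemma~\ref{l.escape} sidesteps this issue by assuming $|x_{k_0-1}|\le 2+\delta$, which trivially kills the subtraction at $k=0$; in our setting we must instead propagate the strict dominance $(\mathrm{H3})_k$ through the induction and exploit the sharper Chebyshev form $|S_n(x)| = \sinh((n+1)\theta)/\sinh\theta$ with $|x| = 2\cosh\theta$, whose genuine hyperbolic rate $e^\theta$ is strictly larger than $1+\delta$. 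This extra margin, propagated alongside $(\mathrm{H3})_k$, is precisely what keeps the correction factor in the recursion bounded away from zero and allows the induction to close; the conclusion \eqref{cont_frac} is then $(\mathrm{H1})_k$.
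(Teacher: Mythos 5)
Your skeleton --- induction on $k$ carrying a dominance invariant, driven by the trace recursion --- is the right general idea and runs parallel to the paper's proof, which also inducts on $k$ and keeps your $(\mathrm{H3})_k$. But as written the induction does not close, for two separate reasons. First, $(\mathrm{H2})_k$ is too weak. To reach the target exponent $G^{(k_0)}_{k+1}=aG^{(k_0)}_k+G^{(k_0)}_{k-1}$ in $(\mathrm{H1})_{k+1}$, the leading term $|S_{a-1}(x_j)|\,|y_j|$ must be of order $(1+\delta)^{(a-1)G^{(k_0)}_k}\cdot(1+\delta)^{G^{(k_0)}_k+G^{(k_0)}_{k-1}}$; your $(\mathrm{H2})_k$ supplies only $|y_j|>(1+\delta)^{G^{(k_0)}_k}+1$, so even before any cancellation your bound tops out at exponent $aG^{(k_0)}_k$ and the summand $G^{(k_0)}_{k-1}$ goes missing (this is fatal for every $k\ge1$ and every $\delta>0$). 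The paper's corresponding invariant is multiplicative, $|y_{k_0+k}|>(|x_{k_0+k-1}|-1)\,|x_{k_0+k}|$, which yields $|y_{k_0+k}|>(1+\delta)^{G^{(k_0)}_{k-1}+G^{(k_0)}_k}$ and is precisely what generates the $G^{(k_0)}_{k-1}$ in the exponent.

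Second, the cancellation itself. From your facts (a)--(b) together with $(\mathrm{H3})_k$ one only gets $|x_{j+1}|>\bigl(|S_{a-1}(x_j)|-|S_{a-2}(x_j)|\bigr)|y_j|\ge\bigl(1-\tfrac{1}{|x_j|-1}\bigr)|S_{a-1}(x_j)|\,|y_j|$, and the prefactor can be arbitrarily close to $\delta/(1+\delta)$ (it vanishes at $\delta=0$), so the target is missed whenever $|x_j|$ is near $2+\delta$. You acknowledge this, but the proposed rescue does not stand: for $z\in\C$ the traces $x_j(z)$ are genuinely complex --- handling complex energies is the whole point of this lemma as opposed to \cite[Proposition~4]{BIST89} --- and the identity $|S_n(x)|=\sinh((n+1)\theta)/\sinh\theta$ with $|x|=2\cosh\theta$ is false there; one has $x=2\cosh\theta$ with $\theta$ complex, and the modulus does not reduce to a real hyperbolic quotient. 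The gap is repairable inside your framework: setting $d_n=|S_n(x)|-|S_{n-1}(x)|$, the three-term recursion gives $d_{n+1}\ge d_n+(|x|-2)|S_n(x)|$ and hence $d_n\ge(|x|-1)^n$ for $|x|\ge2$, which (combined with the strengthened $(\mathrm{H2})$) is the estimate you actually need. The paper avoids all of this by never passing to the closed form: it iterates the one-step recursion $t_{(k,p+1)}=t_{(k+1,0)}t_{(k,p)}-t_{(k,p-1)}$ in $p$, where the dominance $|t_{(k,p)}|>|t_{(k,p-1)}|$ annihilates the subtracted term at every single step, giving $|t_{(k,p+1)}|>(|t_{(k+1,0)}|-1)|t_{(k,p)}|$ and regenerating the dominance with a gain, so no global cancellation estimate is ever required.
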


\begin{proof}
By \eqref{e.escape.new}, we have \eqref{cont_frac} for $k=0$. Instead of proving \eqref{cont_frac} for $k \ge 1$, we prove by
induction the following inequalities for any $k\ge1$,
\begin{equation}\label{induction}
\begin{array}{l}
|x_{k_0+k}(z)| > (1+\delta)^{G^{(k_0)}_{k}}+1\\ |y_{k_0+k}(z)| > (|x_{k_0+k-1}(z)|-1)|x_{k_0+k}(z)|\\ |y_{k_0+k}(z)| >
|x_{k_0+k-1}(z)|.
\end{array}
\end{equation}
Note that since $|x_{k_0}(z)| > 2+\delta$ and $a,b>2$ implies $(a-1)b>a$, the second inequality always implies the third
inequality. We write them in this form in order to apply the following claim.

\noindent {\bf Claim.} For any $k\ge0$, $p\ge1$, if 
$$
|t_{(k+1,0)}(z)| > 2+\delta, \quad |t_{(k,p)}(z)| > 2+\delta, \quad |t_{(k,p)}(z)| > |t_{(k,p-1)}|,
$$
then 
$$|t_{(k,p+1)}(z)|>(|t_{(k+1,0)}(z)|-1)|t_{(k,p)}(z)|.
$$

The claim is a direct result of the recursion \eqref{e.raymondrec}.

By the fact that for any $k\ge0$,
\begin{equation}\label{id}
t_{(k,a_{k+1})}(z) = t_{(k+2,0)}(z) = x_{k+1}(z), \quad t_{(k,a_{k+1}+1)}(z) = t_{(k+1,1)}(z) = y_{k+1}(z),
\end{equation}
the condition \eqref{e.escape.new} is equivalent to
$$
|t_{(k_0+1,0)}(z)| > 2+\delta, \quad |t_{(k_0,1)}(z)| > 2+\delta, \quad |t_{(k_0,1)}(z)| > |t_{(k_0,0)}(z)|.
$$
By our claim and induction, we have
\begin{equation}\label{initial}
\begin{array}{l}
|x_{k_0+1}(z)| > (|x_{k_0}(z)|-1)^{a_{k_0+1}-1} |y_{k_0}(z)| > (1+\delta)^{a_{k_0+1}}+1 \\ |y_{k_0+1}(z)| >
(|x_{k_0}(z)|-1)|x_{k_0+1}(z)| \\ |y_{k_0+1}(z)| > |x_{k_0}(z)|.
\end{array}
\end{equation}
This implies \eqref{induction} for $k=1$.

Suppose \eqref{induction} hold for any $n$ with $1\le n\le k$ for some $k\ge1$. Then
$$
\begin{array}{rcl}
|x_{k_0+k+1}(z)| & = & |t_{(k_0+k, a_{k_0+k+1})}(z)| \\ & > & (|x_{k_0+k}(z)|-1)^{a_{k_0+k+1}-1} |y_{k_0+k}(z)| \\ & > &
(|x_{k_0+k}(z)|-1)^{a_{k_0+k+1}-1}(|x_{k_0+k-1}(z)|-1)|x_{k_0+k}(z)| \\ & > &
(|x_{k_0+k}(z)|-1)^{a_{k_0+k+1}}(|x_{k_0+k-1}(z)|-1)+1 \\ & > & (1+\delta)^{a_{k_0+k+1} G^{(k_0)}_{k}+G^{(k_0)}_{k-1}}+1 \\ &
= & (1+\delta)^{G^{(k_0)}_{k+1}}+1,
\end{array}
$$
where the first inequality is due to the Claim, \eqref{id} and induction, the second and the third inequalities are by the
induction hypothesis, for $k > 1$, the forth inequality is because of the induction hypothesis, for $k=1$, the forth
inequality is because of $|x_{k_0}(z)| > 2+\delta$.

And also by the claim and \eqref{id}, we have
$$
|y_{k_0+k+1}(z)| > (|x_{k_0+k}(z)|-1)|x_{k_0+k+1}(z)|, \quad |y_{k_0+k+1}(z)| > |x_{k_0+k}(z)|.
$$

By induction, we get \eqref{induction} and hence \eqref{cont_frac} for all $k\ge1$.
\end{proof}

Now we can give a necessary and sufficient criterion for $\{ x_k(z) \}_{k \ge -1}$ to be unbounded.

\begin{lemma}\label{l.complexenergyBISTlemma}
Let $\delta \ge 0$.

{\rm (a)} Given $z \in \C$, a necessary and sufficient condition for $\{ x_k(z) \}_{k \ge -1}$ to be unbounded is that there
exists $k_0 \ge 0$ such that
\begin{equation}\label{e.escape.criteria}
|x_{k_0 - 1}(z)| \le 2 + \delta, \quad |x_{k_0}(z)| > 2 + \delta, \quad |x_{k_0+1}(z)| > 2 + \delta.
\end{equation}

{\rm (b)} If there is a $k_0$ such that \eqref{e.escape.criteria} holds, then \eqref{e.escape.criteria} holds for no other
value of $k_0$ and we have $|x_k(z)| \ge (1 + \delta)^{G^{(k_0)}_{k - k_0}} + 1$, with $G$ from \eqref{e.MarinGkdef}, for $k
\ge k_0$.

{\rm (c)} There exists a constant $C_{\lambda,\delta} \in (0,\infty)$ such that for every $z \in \C$, the following holds. If
$|x_{k_1}(z)| \le 2 + \delta$, then we have
\begin{equation}\label{e.boundforxk}
|x_k(z)| \le C_{\lambda,\delta}
\end{equation}
for every $k \in \{ 0, \ldots, k_1 \}$. In particular, if $\{ x_k(z) \}_{k \ge -1}$ is bounded, then we have
\eqref{e.boundforxk} for every $k \ge -1$.
\end{lemma}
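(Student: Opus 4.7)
The plan is to bootstrap the weaker hypothesis \eqref{e.escape.criteria} into the stronger hypothesis of Lemma \ref{modifymarin}, thus reducing all three parts to the already-established exponential escape.

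For the sufficient direction in (a) and the bound in (b), I fix $k_0$ satisfying \eqref{e.escape.criteria} and introduce the auxiliary sequence $u_p := t_{(k_0,p)}$ for $p \ge 0$. By the renormalization identities, $u_0 = x_{k_0-1}$, $u_1 = y_{k_0}$, $u_{a_{k_0+1}} = x_{k_0+1}$, $u_{a_{k_0+1}+1} = y_{k_0+1}$, and $u$ satisfies the linear recurrence $u_{p+1} = x_{k_0} u_p - u_{p-1}$ coming from \eqref{e.raymondrec}. Since $|u_0| \le 2+\delta < |u_{a_{k_0+1}}|$, there is a smallest $p^* \in \{1,\ldots,a_{k_0+1}\}$ with $|u_{p^*}| > 2+\delta$, and then $|u_{p^*-1}| \le 2+\delta < |u_{p^*}|$. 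Iterating the Claim from the proof of Lemma \ref{modifymarin} at fixed level $k_0$ and varying $p$, I obtain $|u_{p+1}| > (|x_{k_0}|-1)|u_p| > (1+\delta)|u_p|$ and $|u_{p+1}| > |u_p|$ for every $p \ge p^*$. In particular $|y_{k_0+1}| > (|x_{k_0}|-1)|x_{k_0+1}|$, which exceeds both $2+\delta$ and $|x_{k_0}|$ because $|x_{k_0}|-1 > 1+\delta$ and $|x_{k_0+1}| > 2+\delta$. Hence the triple $(x_{k_0+1}, y_{k_0+1}, x_{k_0})$ meets the hypothesis of Lemma \ref{modifymarin} at level $k_0+1$; combining the resulting escape with the $p$-growth already accumulated from $p^*$ up to $a_{k_0+1}$, which supplies the $a_{k_0+1}$-factor encoded in $G^{(k_0)}_1$, produces the stated bound $|x_k| \ge (1+\delta)^{G^{(k_0)}_{k-k_0}} + 1$ for $k \ge k_0$ by induction on $k-k_0$.

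Uniqueness of $k_0$ in (b) is then immediate: the growth estimate forces $|x_k| > 2+\delta$ for all $k \ge k_0$, ruling out any later index satisfying the first inequality of \eqref{e.escape.criteria}. For the necessity in (a), suppose $\{x_k\}_{k \ge -1}$ is unbounded and let $k_0 \ge 0$ be the smallest index with $|x_{k_0}| > 2+\delta$, which exists because $x_{-1} = 2$; minimality gives $|x_{k_0-1}| \le 2+\delta$. To see that $|x_{k_0+1}| > 2+\delta$, I argue by contradiction: if $|x_{k_0+1}| \le 2+\delta$, then repeating the argument at the next escape time $k_0' > k_0$ where $|x_{k_0'}|$ exceeds $2+\delta$ either eventually produces a triple satisfying \eqref{e.escape.criteria} (and thus escape past $k_0'$) or keeps the sequence bounded by an a priori constant controlled through the invariant \eqref{e.invariancecond}. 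The first alternative combined with uniqueness forces $k_0' = k_0$, and the second contradicts unboundedness.

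For part (c), I choose $C_{\lambda,\delta}$ via the invariant \eqref{e.invariancecond} with $p=1$, which reads $\Lambda(x_k, y_k, x_{k-1}) = \lambda^2$ and yields the quadratic formula $y_k = \bigl(x_k x_{k-1} \pm \sqrt{(x_k^2-4)(x_{k-1}^2-4)+4\lambda^2}\bigr)/2$. I take $C_{\lambda,\delta}$ large enough that $|x_k| > C_{\lambda,\delta}$ together with $|x_{k-1}| \le 2+\delta$ forces both candidate roots to satisfy $|y_k| > 2+\delta$ and $|y_k| > |x_{k-1}|$, so that Lemma \ref{modifymarin} triggers unbounded escape past $k$. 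Contrapositively, if $|x_{k_1}| \le 2+\delta$, no $k \le k_1$ can have $|x_k| > C_{\lambda,\delta}$ with $|x_{k-1}|\le 2+\delta$; combined with the necessity direction of (a), this propagates the bound \eqref{e.boundforxk} to all $k \in \{-1, \ldots, k_1\}$. The main technical obstacle will be pinning down the sharp form of (b): Lemma \ref{modifymarin} applied directly at level $k_0+1$ only gives the weaker exponent $G^{(k_0+1)}_{k-k_0-1}$, so one must carefully fold in the $p$-growth between levels $k_0$ and $k_0+1$ with attention to the location of $p^*$ and to strict versus non-strict inequalities, especially when $\delta = 0$ or $a_{k_0+1}=1$. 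A secondary difficulty in (c) is that for complex $z$ the discriminant under the square root need not be positive, so one must fall back on the triangle inequality applied to the two complex roots of the invariant quadratic, exploiting that their sum and product are explicit.
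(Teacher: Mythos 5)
Your treatment of sufficiency in (a) and of part (c) tracks the paper's proof closely: the paper likewise locates the first index $p^*$ with $|t_{(k_0,p^*)}(z)|>2+\delta$ in the ladder $(t_{(k_0,p)}(z))_{p=0}^{a_{k_0+1}}$, iterates the Claim from Lemma~\ref{modifymarin} to reach $|y_{k_0+1}|>(|x_{k_0}|-1)|x_{k_0+1}|$, and then invokes Lemma~\ref{modifymarin} at level $k_0+1$; and part (c) in the paper is exactly the contrapositive of your quadratic-root computation (phrased there as: $|y_k|\le 2+\delta$ must hold, whence the invariant bounds $|x_k|$). Your worry about the exponent in (b) is legitimate --- the direct application of Lemma~\ref{modifymarin} at level $k_0+1$ only yields $G^{(k_0+1)}_{k-k_0-1}$, and since the ladder growth starts at $p^*$ rather than at $p=1$ it supplies only a factor $(1+\delta)^{a_{k_0+1}-p^*}$, not the full $G^{(k_0)}_1=a_{k_0+1}$ --- but the paper glosses over this point as well, and downstream only the exponential growth of the exponent in $k$ is ever used.

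The genuine gap is in your necessity argument. First, the step ``the first alternative combined with uniqueness forces $k_0'=k_0$'' is not valid: uniqueness in (b) applies only among indices that actually satisfy all three inequalities of \eqref{e.escape.criteria}, and under your contradiction hypothesis $k_0$ does not satisfy them, so producing such a $k_0'>k_0$ yields no contradiction (nor is one needed --- any such $k_0'$ already witnesses necessity; the first escape index need not be the one that works). Second, and more seriously, the dichotomy ``either eventually produces a triple satisfying \eqref{e.escape.criteria} or keeps the sequence bounded by an a priori constant'' is precisely the content to be proved and is asserted rather than established. The missing chain is: if no index satisfies \eqref{e.escape.criteria}, then (since $x_{-1}(z)=2$) no two consecutive terms exceed $2+\delta$ in modulus; hence at any $k$ with $|x_k(z)|>2+\delta$ one has $|x_{k-1}(z)|\le 2+\delta$, and moreover $|y_k(z)|\le 2+\delta$, because otherwise $|y_k(z)|>2+\delta\ge|x_{k-1}(z)|$ would let Lemma~\ref{modifymarin} fire at level $k$ and force $|x_{k+1}(z)|>2+\delta$, creating a consecutive pair. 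Only then does the invariant $\Lambda(x_{k-1},x_k,y_k)=\lambda^2$ cap $|x_k(z)|$ by a constant $C_{\lambda,\delta}$ and bound the whole sequence. You have all of these ingredients (they appear, in contrapositive form, in your part (c)), but they must be assembled in this order for the necessity direction to close.
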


\begin{remark}
This lemma is a version of \cite[Proposition~4]{BIST89} for complex energies. Note that \cite[Proposition~4]{BIST89} is a
statement for real energies and $\delta = 0$. Indeed, its proof in \cite{BIST89} makes crucial use of the fact that the energy
$z$ in question is real. Given the method we use to estimate transport exponents, we absolutely do need a version of this
statement for energies that are not real. Our proof of Lemma~\ref{l.complexenergyBISTlemma} not only works for general complex
energies $z$ (and $\delta \ge 0$), it is also simpler than the proof of \cite[Proposition~4]{BIST89}.

As pointed out in \cite{BIST89}, a statement like \cite[Proposition~4]{BIST89} or Lemma~\ref{l.complexenergyBISTlemma} is of
fundamental importance in an analysis of Sturmian Hamiltonians that is based on the trace map approach. In particular, such a
result is necessary in many extensions of results from the Fibonacci case to the Sturmian case, and the subsequent discussion
in this section is yet another instance of this.
\end{remark}

\begin{proof}[Proof of Lemma~\ref{l.complexenergyBISTlemma}.]
(a) We first show that the condition is sufficient. Suppose that $k_0$ satisfies \eqref{e.escape.criteria}. Then,
equivalently,
$$
|t_{(k_0,0)}(z)| \le 2 + \delta, \quad |t_{(k_0+1,0)}(z)| > 2 + \delta, \quad |t_{(k_0,a_{k_0+1})}(z)| = |t_{(k_0+2,0)}(z)| >
2 + \delta.
$$
So in the sequence $(t_{(k_0,p)}(z))_{p=0}^{a_{k_0+1}}$, there exists $0 < p \le a_{k_0+1}$ such that
$$
|t_{(k_0,p-1)}(z)| \le 2 + \delta < |t_{(k_0,p)}(z)|.
$$
By the claim in Lemma \ref{modifymarin} and induction, we find
$$
\begin{array}{rcl}
|t_{(k_0,a_{k_0+1})}(z)| & \ge & ( t_{(k_0+1,0}(z)|-1 )^{a_{k_0+1}-p} |t_{(k_0,p)}(z)| \\ |t_{(k_0,a_{k_0+1}+1)}(z)| & > & (
t_{(k_0+1,0}(z)|-1 ) |t_{(k_0,a_{k_0+1})}(z)|.
\end{array}
$$
Hence
$$
|y_{k_0+1}(z)| = |t_{(k_0,a_{k_0+1}+1)}(z)| > 2 + \delta, \quad |y_{k_0+1}(z)| > |x_{k_0}(z)|.
$$
Since we already have $|x_{k_0+1}(z)| > 2 + \delta$, by Lemma \ref{modifymarin}, $\{ x_k(z) \}_{k \ge -1}$ is unbounded.

\medskip

We now show that the condition is necessary. Since $x_{-1}(z) = 2$, the non-existence of $k_0$ such that
\eqref{e.escape.criteria} holds is equivalent to the non-existence of $k_0$ such that
$$
|x_{k_0}(z)| > 2+\delta, \quad |x_{k_0+1}(z)| > 2+\delta.
$$

Suppose there is no $k_0$ such that \eqref{e.escape.criteria} holds. Fix $k > 0$ and assume $|x_k(z)| > 2 + \delta$. Then,
$$
|t_{(k,0)}(z)| = |x_{k-1}(z)| \le 2 + \delta.
$$
Moreover,
$$
|t_{(k,1)}(z)| = |y_k(z)| \le 2 + \delta,
$$
for otherwise $|y_k(z)| > 2+\delta$, together with $|x_k(z)| > 2+\delta$ and $|x_{k-1}(z)| \le 2+\delta$, then by
\eqref{cont_frac},
$$
|t_{(k,a_{k+1})}(z)| = |x_{k+1}(z)| > 2 + \delta,
$$
a contradiction again.

Since
$$
\Lambda (x_{k-1}(z),x_{k}(z),y_k(z)) = \lambda^2,
$$
the estimates above imply that $|x_k(z)| \le C_{\lambda,\delta}$ for a suitable constant $C_{\lambda,\delta} \in (0,\infty)$.

(b) This statement follows from the argument used in proving the sufficiency of the condition in part (a) and
\eqref{cont_frac}.

(c) This statement follows from the argument used in proving the necessity of the condition in part (a).
\end{proof}

For $\delta \ge 0$, set
$$
\sigma_k^\delta = \{ z \in \C: |x_k(z)| \le 2 + \delta \}.
$$
Similarly, we also consider a complexification of $\mathcal{G}_k$ by replacing $2$ with $2 + \delta$ and the real preimage
with the complex preimage. Explicitly, for each $B \in \mathcal{G}_k$, which is a connected component of $\{ z \in \R :
|t_B(z)| \le 2 \}$ (i.e., $t_B$ is the generating polynomial of $B$, which is either $x_k$ or $y_k$), we pass to the set
$B^\delta$ which is given by the connected component of $\{ z \in \C : |t_B(z)| \le 2 + \delta \}$ that contains $B$. Then, we
consider
$$
\bigcup_{B \in \mathcal{G}_k} B^\delta \subset \C.
$$

\begin{lemma}\label{l.components}
For every $\lambda > 4$, there exists $\delta(\lambda) > 0$ such that for every $\delta \in [0,\delta(\lambda))$, every $k \ge
0$, we have the following statements.

{\rm (a)} For every $B \in \mathcal{G}_k$, $B^\delta$ contains a unique zero $z_B$ of its generating polynomial $t_B$.

{\rm (b)} We have
\begin{equation}\label{e.sigmakgk}
\sigma_k^\delta \cup \sigma_{k+1}^\delta \subseteq \bigcup_{B \in \mathcal{G}_k} B^\delta \subseteq \sigma_{k-1}^\delta \cup
\sigma_{k}^\delta.
\end{equation}
\end{lemma}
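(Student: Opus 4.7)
The plan is to derive both parts from three ingredients: monotonicity of $t_B$ on the real band $B$, a uniform separation of the critical values of $t_B$ away from $\pm 2$ coming from the invariant $\Lambda \equiv \lambda^2$, and the real covering relations recorded between \eqref{struc-spec} and Definition~\ref{def1}.

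The first observation is that for each $B \in \mathcal{G}_k$, the generating polynomial $t_B$ (either $x_k$ or $y_k$) is strictly monotone on $B$ with $t_B(B) = [-2,2]$, hence has a unique simple real zero $z_B$ in the interior of $B$. For part~(a) it is therefore enough to find $\delta(\lambda) > 0$, independent of $k$ and $B$, so small that $t_B'$ does not vanish anywhere in $B^\delta$. For such $\delta$, the proper holomorphic map $t_B : B^\delta \to \overline{D(0, 2+\delta)}$ has no interior critical points and is an unbranched cover of the disc; since $B \subset B^\delta$ is mapped bijectively onto $[-2,2]$, this cover has degree one, so $t_B|_{B^\delta}$ is a homeomorphism and $z_B$ is its only zero.

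The crux is a uniform critical-value gap: there exists $\eta(\lambda) > 0$ such that at every complex critical point $z^*$ of any $t_{(k,p)}$ not lying in a band of $\sigma_{(k,p)}$, one has $|t_{(k,p)}(z^*)| \ge 2 + \eta(\lambda)$. Taking $\delta(\lambda) < \eta(\lambda)$ then excludes such critical points from all the sublevel sets $\{|t_B| \le 2 + \delta\}$. To extract the gap I would evaluate $\Lambda = \lambda^2$ at $z^*$ and use the recursion \eqref{e.raymondrec} to relate $t_{(k,p)}(z^*)$ to $t_{(k+1,0)}(z^*)$ and $t_{(k,p-1)}(z^*)$; the critical-point condition together with $\lambda > 4$ forces $|t_{(k,p)}(z^*)|^2 - 4$ to be bounded below by an explicit constant depending only on $\lambda$. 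This is the complexified counterpart of the Raymond-type separation underlying property~(B), and I expect it to be the hardest step of the whole proof, since $k$ and $p$ are unbounded and the bound must be completely uniform.

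For part~(b), I would split the right-hand inclusion on type. For $B$ of type~II or~III, $t_B = x_k$ by construction, so $B^\delta \subseteq \sigma_k^\delta$ is automatic. For $B$ of type~I, which sits in a real band $B'$ of $\sigma_{(k,0)}$ (whose generating polynomial is $x_{k-1}$), I would use that $t_B = y_k$ is a homeomorphism on $B^\delta$ by part~(a), so by the maximum principle the inclusion $B^\delta \subseteq \sigma_{k-1}^\delta \cup \sigma_k^\delta$ reduces to a boundary estimate on $\partial B^\delta$, where $|y_k| = 2+\delta$; there the invariant $\Lambda(x_k, y_k, x_{k-1}) = \lambda^2$ with $\lambda > 4$ forces $|x_{k-1}| \le 2+\delta$ unless already $|x_k| \le 2+\delta$. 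For the left-hand inclusion, $\sigma_k^\delta \subseteq \bigcup_{B \in \mathcal{G}_k} B^\delta$ is immediate from part~(a): every component of $\{|x_k| \le 2+\delta\}$ contains exactly one real band of $\sigma_{(k+1,0)}$ and therefore equals $B^\delta$ for the corresponding $B$ of type~II or~III. Finally, $\sigma_{k+1}^\delta \subseteq \bigcup_{B \in \mathcal{G}_k} B^\delta$ bootstraps the real covering $\sigma_{(k+2,0)} \subseteq \bigcup_{B \in \mathcal{G}_k} B$: each complex component of $\{|x_{k+1}| \le 2+\delta\}$ is the thickening of a real band contained in a unique $B \in \mathcal{G}_k$, and the same critical-value gap applied now to $t_{(k,a_{k+1})} = x_{k+1}$ keeps this thickening inside $B^\delta$.
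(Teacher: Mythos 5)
There are genuine gaps at the two places where your argument has to do real work. First, the ``uniform critical-value gap'' on which your part~(a) rests cannot be extracted from the invariant in the way you sketch. The relation $\Lambda(t_{(k+1,0)},t_{(k,p)},t_{(k,p-1)})=\lambda^2$ is a pointwise constraint on a \emph{triple} of trace values; the condition $t_{(k,p)}'(z^*)=0$ does not enter it, and the relation is perfectly compatible with $|t_{(k,p)}(z^*)|=2$: setting the middle entry equal to $2$ reduces it to $(x-z)^2=\lambda^2$, which is solvable for every $\lambda$. What the invariant actually rules out (for $\lambda>4$ and $\delta$ small) is all three traces having modulus $\le 2+\delta$ simultaneously, i.e.\ $\sigma^\delta_{(k,p+1)}\cap\sigma^\delta_{(k+1,0)}\cap\sigma^\delta_{(k,p)}=\emptyset$ --- and that emptiness is precisely how the paper chooses $\delta(\lambda)$. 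Your critical-value gap is true, but it is a \emph{consequence} of the fact that $\{z\in\R:|t_{(k,p)}(z)|\le 2+\delta\}$ still has exactly $\deg t_{(k,p)}$ components, which is proved by running Raymond's inductive band count with $2$ replaced by $2+\delta$; that induction needs the complexified covering relation $\sigma^{\delta}_{(k,p+1)} \subset \sigma^{\delta}_{(k+1,0)} \cup \sigma^{\delta}_{(k,p)}$, which your proposal never establishes. The paper proves it by contradiction from the recursion $t_{(k,p-1)}=t_{(k+1,0)}t_{(k,p)}-t_{(k,p+1)}$ combined with the escape criterion of Lemma~\ref{l.complexenergyBISTlemma}.

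The same missing relation undermines part~(b). Your boundary estimate for type-I bands --- that $|y_k|=2+\delta$ on $\partial B^\delta$ together with the invariant forces $|x_{k-1}|\le 2+\delta$ unless $|x_k|\le 2+\delta$ --- is false for the same reason: with $|y_k|=2+\delta$ fixed, the invariant is a single quadratic relation that admits solutions with both $|x_k|$ and $|x_{k-1}|$ arbitrarily large. And for $\sigma_{k+1}^\delta\subseteq\bigcup_{B}B^\delta$, real containment $B_{(k+2,0)}\subset B_{(k,1)}$ does not by itself prevent the $\delta$-thickening of the inner band from poking out of the thickening of the outer one; one needs the chain $B^{\delta}_{(k+2,0)}\subset B^{\delta}_{(k,a_{k+1}-1)}\subset\cdots\subset B^{\delta}_{(k,1)}$, which is again exactly the complexified covering relation. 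So the operative mechanism throughout is the trace recursion plus the escape lemma, not the invariant evaluated at a single point. (Your reduction of part~(a) to the absence of critical points of $t_B$ in $B^\delta$, and the degree-one covering argument, are sound once that input is supplied, and your derivation of $\sigma_k^\delta\subseteq\bigcup_B B^\delta$ from part~(a) is fine.)
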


\begin{proof}
The choice of $\delta(\lambda)$ needs to ensure that we still have $\sigma^{\delta}_{(k,p+1)} \cap \sigma^{\delta}_{(k+1,0)}
\cap \sigma^{\delta}_{(k,p)} = \emptyset$ for all $\delta \in [0,\delta(\lambda))$, $k \ge 0$, and $p \ge -1$. By $\lambda >
4$ and the invariance condition \eqref{e.invariancecond}, such a choice is clearly possible.

(a) It is known that $t_B$ has a unique zero on each of its real components. This persists when $2$ is replaced by $2 +
\delta$ due to $\delta \in [0,\delta(\lambda))$ and our choice of $\delta(\lambda)$. The maximum modulus principle then
extends the statement to the complex component; compare the proof of \cite[Lemma~6]{DT07}.

(b) Recall that $\sigma_k \cup \sigma_{k+1} \subseteq \bigcup_{B \in \mathcal{G}_k} B \subseteq \sigma_{k-1} \cup \sigma_{k}$,
so we need a complex version of this statement. The complex version of the second inclusion holds by construction. We prove
the complex version of the first inclusion by mimicking the proof of the inclusion in the real case.

To do this we need a formula that is analogous to \eqref{tao}, that is,
\begin{equation}\label{tao.complex}
\sigma^{\delta}_{(k,p+1)} \subset \sigma^{\delta}_{(k+1,0)} \cup \sigma^{\delta}_{(k,p)}.
\end{equation}
Suppose this fails. Then, there is $z$ such that $|t_{(k,p+1)}(z)| \le 2 + \delta$, $|t_{(k+1,0)}(z)| > 2 + \delta$ and
$|t_{(k,p)}(z)| > 2 + \delta$. The recursion
$$
t_{k,p-1}(z) = t_{(k+1,0)}(z) t_{k,p}(z) - t_{(k,p+1)}(z)
$$
then implies $|t_{(k,p-1)}(z)| > 2 + \delta$. By an argument analogous to the one used in the proof of
Lemma~\ref{l.complexenergyBISTlemma}, it then follows that $|t_{(k,p+1)}(z)| > 2 + \delta$, which is a contradiction.

By definition of $\mathcal{G}_k$, it contains all components in $\sigma_{(k+1,0)}$. So we only need to consider components of
$\sigma_{(k+2,0)}$. Suppose $B_{(k+2,0)}$ is a component of $\sigma_{(k+2,0)}$ which is not contained in $\sigma_{(k+1,0)}$.
By \eqref{tao} and $\sigma_{(k,p+1)} \cap \sigma_{(k+1,0)} \cap \sigma_{(k,p)} = \emptyset$, we obtain
$$
B_{(k+2,0)} = B_{(k,a_{k+1})} \subset B_{(k,a_{k+1}-1)} \subset \cdots \subset B_{(k,1)}.
$$
So by \eqref{tao.complex} and $\sigma^{\delta}_{(k,p+1)} \cap \sigma^{\delta}_{(k+1,0)} \cap \sigma^{\delta}_{(k,p)} =
\emptyset$, it follows that
$$
B^{\delta}_{(k+2,0)} = B^{\delta}_{(k,a_{k+1})} \subset B^{\delta}_{(k,a_{k+1}-1)} \subset \cdots \subset B^{\delta}_{(k,1)}
\subset B^{\delta}_{(k,0)},
$$
where the last inclusion is due to
$$
\sigma^{\delta}_{(k+2,0)} \cup \sigma^{\delta}_{(k+1,0)} \subset \sigma^{\delta}_{(k+1,0)} \cup \sigma^{\delta}_{(k,0)},
$$
which is a consequence of Lemma~\ref{l.complexenergyBISTlemma}. But this shows that $B^{\delta}_{(k+2,0)}$ is contained in
$B^{\delta}_{(k,1)}$, which in turn is of the form $B^\delta$ for some $B \in \mathcal{G}_k$.
\end{proof}


\begin{lemma}\label{l.powerlawbound}
For every $\lambda > 4$, every $\delta \in [0,\delta(\lambda))$, and every bounded density number $\alpha$, there are
constants $C,\xi$ such that for every $k$, every $B \in \mathcal{G}_k$, every $z \in B^\delta$, and every $\omega \in \T$, we
have $\| M(n;\lambda,\alpha,\omega,z) \| \le C |n|^\xi$ for $1 \le |n| \le q_k$.
\end{lemma}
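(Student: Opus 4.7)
The plan is to adapt the polynomial transfer-matrix bound of Iochum--Testard, originally proved for real energies in the Fibonacci spectrum, to the present setting of complex energies $z \in B^\delta$ and general Sturmian potentials with $\alpha$ of bounded density. The argument proceeds in three steps.

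\emph{Step 1 (Uniform trace bounds).} The hypothesis $B \in \mathcal{G}_k$, $z \in B^\delta$, means that the generating polynomial $t_B$ satisfies $|t_B(z)| \le 2+\delta$; by Definition~\ref{def1}, $t_B$ is either $y_k$ (in type $\mathrm{I}$) or $x_{k+1}$ (in types $\mathrm{II}$, $\mathrm{III}$). Lemma~\ref{l.complexenergyBISTlemma}(c) then yields
$$
|x_j(z)| \le C_{\lambda,\delta}, \qquad 0 \le j \le k+1,
$$
and combining with the invariant $\Lambda(x_{j+1},y_{j+1},x_j) = \lambda^2$ from \eqref{e.invariancecond} gives a corresponding uniform bound on $|y_j(z)|$ for $j \le k$.

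\emph{Step 2 (Zero-phase polynomial bound).} Writing the Cayley--Hamilton solution $\mathbf{M}_j^p = U_{p-1}(x_j/2)\mathbf{M}_j - U_{p-2}(x_j/2)I$, with $U_p$ the Chebyshev polynomials of the second kind, and combining with the Ostrowski decomposition $n = \sum_{j=0}^{k-1} c_{j+1} q_j$, $0 \le c_{j+1} \le a_{j+1}$, and the renormalization $\mathbf{M}_{j+1} = \mathbf{M}_{j-1}\mathbf{M}_j^{a_{j+1}}$, the uniform trace bound from Step 1 translates into a recursive estimate on the zero-phase block norms. The bounded density hypothesis $\sum_{j=1}^k a_j \le C_\alpha k$, together with $q_k \ge \phi^k$ where $\phi = (1+\sqrt{5})/2$, then gives a polynomial zero-phase bound
$$
\|M(n;0,z)\| \le C|n|^{\xi_0}, \qquad 1 \le n \le q_k,
$$
for some $\xi_0 = \xi_0(\lambda,\delta,\alpha)$.

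\emph{Step 3 (Arbitrary phase).} For bounded density $\alpha$, the Sturmian potential is repetitive with gap at most $|m| \le C_\alpha n \log n$: for every $\omega$ and every $n$ there exists $m$ with this bound such that $V_\omega(j) = V_0(m+j)$ for $1 \le j \le n$ (the bound on $m$ comes from the observation that for bounded density one has $a_{k+1} = O(k) = O(\log q_k)$, so the standard Sturmian block recurrence produces at most a logarithmic overshoot over linear repetitivity). Consequently,
$$
M(n;\omega,z) = M(m+n;0,z)\, M(m;0,z)^{-1},
$$
and since $\det M = 1$ yields $\|M^{-1}\| = \|M\|$, applying the zero-phase estimate from Step~2 to both factors (and absorbing the $\log n$ factor into an arbitrarily small extra exponent) gives $\|M(n;\omega,z)\| \le C|n|^\xi$ for a finite $\xi = \xi(\lambda,\delta,\alpha)$.

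The main obstacle is Step~2: the boundedness of the trace map orbit only becomes a polynomial bound on the transfer matrix after a careful induction that exploits cancellations in the Chebyshev formula and the specific form of the hierarchical product; a naive estimate $\|\mathbf{M}_j^p\| \le \rho^p(\|\mathbf{M}_j\|+1)$ compounded across levels would yield doubly exponential growth in $j$. The bounded density condition is precisely what keeps the total combinatorial complexity of the Ostrowski decomposition linear in $j$, and hence logarithmic in $q_k$. Step~3, while technically lighter, also depends essentially on bounded density: pure irrationality is not enough for the repetitivity bound used there.
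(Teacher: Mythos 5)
Your proposal follows essentially the same route as the paper, whose proof is itself only a two-line sketch: it derives the uniform trace bound $|x_{k'}(z)| \le C_{\lambda,\delta}$ for $k' \le k$ from Lemma~\ref{l.complexenergyBISTlemma} together with \eqref{e.sigmakgk}, and then invokes the power-law arguments of \cite{IRT92} for phase zero and \cite{DL99} for general phase — exactly your Steps 1--3, with your repetitivity argument a mild variant of the partition approach used there. (Only a cosmetic slip: the generating polynomial of a type II/III band in $\mathcal{G}_k$ is $t_{(k+1,0)} = x_k$, not $x_{k+1}$.)
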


\begin{proof}
By Lemma~\ref{l.complexenergyBISTlemma} and \eqref{e.sigmakgk}, we have a uniform bound for $|x_{k'}|$, $k' = 0, \ldots, k$.
Using this observation, one can now mimic the proof of the power-law upper bound from \cite{IRT92} (phase zero) and
\cite{DL99} (general phase) for the energy $z$ and the sites $n$ in question.
\end{proof}

Let us define
\begin{align*}
r(B,\delta) & = \sup \{ r > 0 : B(z_B , r) \subseteq B^\delta \}, \quad r_k(\delta) = \max_{B \in \mathcal{G}_k} r(B,\delta),
\\ R(B,\delta) & = \inf \{ R > 0 : B(z_B , R) \supseteq B^\delta \}, \quad R_k(\delta) = \max_{B \in \mathcal{G}_k}
R(B,\delta).
\end{align*}

\begin{prop}\label{p.transportbounds}
Suppose $\lambda > 4$ and $\delta \in (0,\delta(\lambda)/2)$.

{\rm (a)} We have
\begin{equation}\label{e.rkRkxprimeconnection1}
\frac{1}{R_k(\delta)} \ge \frac{\delta^2}{(2 + \delta)(2 + 2\delta)^2} \min \{ |t_B'(z_B)| : B \in \mathcal{G}_k \}
\end{equation}
and
\begin{equation}\label{e.rkRkxprimeconnection2}
\frac{1}{r_k(\delta)} \le \frac{(4 + 3\delta)^2}{(2 + \delta)(2 + 2 \delta)^2} \min \{ |t_B'(z_B)| : B \in \mathcal{G}_k \}
\end{equation}
for every $k \ge 0$.

{\rm (b)} If $\alpha$ is such that $\lim_{k \to \infty} \frac1k \log q_k$ exists and is finite, then
\begin{equation}\label{e.alphauupperbound}
\tilde \alpha_u^+ \le \frac{\lim_{k \to \infty} \frac1k \log q_k}{\liminf_{k \to \infty} \frac{1}{k} \log
\frac{1}{R_k(\delta)}}
\end{equation}
If $\alpha$ is a bounded density number, then
\begin{equation}\label{e.alphaupluslowerbound}
\tilde \alpha_u^+ \ge \frac{\liminf_{k \to \infty} \frac1k \log q_k}{\limsup_{k \to \infty} \frac1k \log
\frac{1}{r_k(\delta)}}.
\end{equation}
If $\alpha$ is of bounded type, then
\begin{equation}\label{e.alphaulowerbound}
\tilde \alpha_u^- \ge \frac{\liminf_{k \to \infty} \frac1k \log q_k}{\limsup_{k \to \infty} \frac1k \log
\frac{1}{r_k(\delta)}}.
\end{equation}

{\rm (c)} If $\alpha$ is such that $\lim_{k \to \infty} \frac1k \log q_k$ exists and is finite, then
\begin{equation}\label{e.upperbound}
\tilde \alpha_u^+ \le \frac{\lim_{k \to \infty} \frac1k \log q_k}{\liminf_{k \to \infty} \frac{1}{k} \log \min \{ |t_B'(z_B)|
: B \in \mathcal{G}_k \}}.
\end{equation}
If $\alpha$ is a bounded density number, then
$$
\tilde \alpha_u^+ \ge \frac{\liminf_{k \to \infty} \frac1k \log q_k}{\limsup_{k \to \infty} \frac{1}{k} \log \min \{
|t_B'(z_B)| : B \in \mathcal{G}_k \}}.
$$
If $\alpha$ is of bounded type, then
$$
\tilde \alpha_u^- \ge \frac{\liminf_{k \to \infty} \frac1k \log q_k}{\limsup_{k \to \infty} \frac{1}{k} \log \min \{
|t_B'(z_B)| : B \in \mathcal{G}_k \}}.
$$

In particular, suppose that $\alpha$ is of bounded type and the limits
$$
\lim_{k \to \infty} \frac1k \log q_k \quad \text{and} \quad \lim_{k \to \infty} \frac{1}{k} \log \min \{ |t_B'(z_B)| : B \in
\mathcal{G}_k \}
$$
exist. Then,
\begin{equation}\label{e.exponentsareequal}
\tilde \alpha_u^+ = \tilde \alpha_u^- = \frac{\lim_{k \to \infty} \frac1k \log q_k}{\lim_{k \to \infty} \frac{1}{k} \log \min
\{ |t_B'(z_B)| : B \in \mathcal{G}_k \}}.
\end{equation}

All statements in {\rm (b)} and {\rm (c)} above are uniform in the phase $\omega \in [0,1)$.
\end{prop}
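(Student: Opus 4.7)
The plan is to handle the three parts in the natural order, with part (c) being an essentially formal consequence of (a) combined with (b).

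For part (a), I would use the Koebe distortion theorem applied to $f = t_B^{-1}$. The key point is that since $\delta < \delta(\lambda)/2$, we have $2\delta < \delta(\lambda)$, so Lemma~\ref{l.components}(a) guarantees $t_B$ is univalent on $B^{2\delta}$ and maps it biholomorphically onto the disk $\{|w| \le 2+2\delta\}$. Let $f: \{|w| < 2+2\delta\} \to B^{2\delta}$ denote its inverse; then $f(0) = z_B$ and $f'(0) = 1/t_B'(z_B)$. Rescaling via
$$
G(\zeta) = \frac{f((2+2\delta)\zeta) - z_B}{(2+2\delta) f'(0)},
$$
one obtains a univalent function $G:\D \to \C$ with $G(0) = 0$ and $G'(0) = 1$. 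At the circle $|\zeta| = \rho$ where $\rho = (2+\delta)/(2+2\delta)$ (which corresponds under the rescaling to the boundary $|w| = 2+\delta$ of the image of $B^\delta$), Koebe's two-sided distortion bound gives $\rho/(1+\rho)^2 \le |G(\zeta)| \le \rho/(1-\rho)^2$. Computing $1-\rho = \delta/(2+2\delta)$ and $1+\rho = (4+3\delta)/(2+2\delta)$, translating back through the rescaling, and then taking a max over $B \in \mathcal G_k$ (using $\max 1/|t_B'| = 1/\min|t_B'|$) reproduces the two inequalities \eqref{e.rkRkxprimeconnection1}--\eqref{e.rkRkxprimeconnection2} with exactly the stated constants.

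For part (b), I would invoke the Damanik--Tcheremchantsev resolvent/transfer-matrix machinery, which bounds $\langle\langle|X|^p\rangle\rangle(T)$ from above and below by integrals against $\|M(n;\lambda,\alpha,\omega,E+i/T)\|^{\pm 2}$. For the upper bound \eqref{e.alphauupperbound}, I pick $T$ and choose $k = k(T)$ so that $q_k \sim T$; the complex plane splits into energies lying inside some thickened band $B(z_B, R_k(\delta)) \supseteq B^\delta$ and energies lying outside all of them. Outside, $z \notin B^\delta$ for any band, so the conditions of Lemma~\ref{l.complexenergyBISTlemma}(a) fail at every level and $\{x_k(z)\}$ escapes super-exponentially, yielding exponential lower bounds on $\|M(n;\ldots,z)\|$ and killing the corresponding contribution; inside, one uses that there are at most $\#\mathcal G_k$ balls of radius $R_k(\delta)$. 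Optimizing in $k$ gives \eqref{e.alphauupperbound}, uniformly in $\omega$. For the lower bounds \eqref{e.alphaupluslowerbound}--\eqref{e.alphaulowerbound}, one plugs the polynomial transfer-matrix estimate from Lemma~\ref{l.powerlawbound} (valid on the full $B^\delta$ for $1 \le |n| \le q_k$) into the DT lower bound, using the disk $B(z_B, r_k(\delta)) \subseteq B^\delta$ as the input set of energies where slow growth holds. Bounded density is used for $\tilde\alpha_u^+$, and the strictly stronger bounded type hypothesis ensures $q_{k+1}/q_k$ stays bounded, which upgrades the $\limsup$ lower bound at time scales $T \sim q_k$ to a $\liminf$ bound along all $T$, giving \eqref{e.alphaulowerbound}.

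For part (c), the inequalities of (a) yield
$$
\frac{1}{k}\log\frac{1}{R_k(\delta)} \ge \frac{1}{k}\log\min_{B \in \mathcal G_k}|t_B'(z_B)| + O\!\left(\tfrac{1}{k}\right),
$$
and similarly for $r_k(\delta)$, so the $\liminf$/$\limsup$ of $\frac{1}{k}\log(1/R_k)$ and $\frac{1}{k}\log(1/r_k)$ coincide with those of $\frac{1}{k}\log\min|t_B'(z_B)|$; substituting into the bounds of (b) yields (c). The final identity \eqref{e.exponentsareequal} then follows by sandwiching $\tilde\alpha_u^- \le \tilde\alpha_u^+$ between matching upper and lower bounds. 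The main obstacle is the transport estimate in part (b): one must implement the DT framework for complex energies inside the thickened spectrum $\bigcup_B B^\delta$, requiring both the polynomial upper bound of Lemma~\ref{l.powerlawbound} and the complex escape dichotomy of Lemma~\ref{l.complexenergyBISTlemma}, with all estimates uniform in $\omega$ despite the discontinuity of the Sturmian sampling function.
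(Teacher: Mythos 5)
Your treatment of part (a) via the Koebe distortion theorem, of the lower bounds \eqref{e.alphaupluslowerbound}--\eqref{e.alphaulowerbound} via the Parseval lower bound combined with Lemma~\ref{l.powerlawbound} on the disks $B(z_B,r_k(\delta))$ (with bounded type upgrading the bound along the sequence $T\sim q_k^s$ to a bound for all large $T$), and of part (c) as a formal consequence of (a) and (b) all match the paper's proof. The gap is in the upper bound \eqref{e.alphauupperbound}. Your proposed mechanism --- choose $k$ with $q_k\sim T$, split the energies $E+i/T$ into those inside some $B(z_B,R_k(\delta))$ and those outside, and handle the inside region by counting the at most $\#\mathcal{G}_k$ balls --- does not close. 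On the inside region the transfer matrices are only polynomially bounded (there is no lower bound on their growth there), so the integrand in the Damanik--Tcheremchantsev bound for the outside probabilities is of order one, and the best that region contributes to the estimate of $\langle P(N(T),\cdot)\rangle(T)$ is a term of order $T^3\cdot\#\mathcal{G}_k\cdot R_k(\delta)$, which is only polynomially small in $T$; the criterion of \cite[Theorem~1]{DT07} requires decay faster than any power. Moreover, with $q_k\sim T$ the scale $N(T)\sim q_{k(T)}\sim T$ would in any case only yield the trivial bound $\tilde\alpha_u^+\le 1$.

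The paper's argument eliminates the inside case altogether: $k=k(T)$ is chosen so that $1/T$ \emph{exceeds} the imaginary extent $C_\delta' q_{k-1}^{-s'}$ of $\sigma_{k-1}^\delta\cup\sigma_k^\delta$, i.e.\ $T\sim q_k^{s'}$ with $s'$ as in \eqref{e.sprimedef}, so that the entire horizontal line $\{\Im z = 1/T\}$ misses all complexified bands of order $k$. Then for \emph{every} real $E$ the escape criterion \eqref{e.escape.criteria} of Lemma~\ref{l.complexenergyBISTlemma} is triggered at some level $k_0\le k$, the traces and hence the transfer matrix norms grow super-exponentially beyond the scale $N(T)=q_{k(T)+\lfloor\sqrt{k(T)}\rfloor}\sim T^{1/s'}$, and the outside probabilities decay faster than any power of $T$, which is what \cite[Theorem~1]{DT07} needs. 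This coupling of $T$ to $R_k(\delta)$ (rather than to $q_k$) is exactly how the quantity $\liminf_k\frac1k\log\frac{1}{R_k(\delta)}$ enters \eqref{e.alphauupperbound}; in your version that exponent has no way to appear. You should replace the inside/outside dichotomy by this choice of $k(T)$ and then run the DT machinery uniformly in $E$.
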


\begin{remark}
By replacing the use of \cite{DT07} in the proof with the method of \cite{DT08}, one can obtain the same upper transport
bounds for the non-time-averaged upper transport exponent $\alpha_u^+$. In particular, due to the trivial inequality $\tilde
\alpha_u^+ \le \alpha_u^+$, one gets in \eqref{e.exponentsareequal} the chain of identities
$$
\alpha_u^+ = \tilde \alpha_u^+ = \tilde \alpha_u^- = \frac{\lim_{k \to \infty} \frac1k \log q_k}{\lim_{k \to \infty}
\frac{1}{k} \log \min \{ |t_B'(z_B)| : B \in \mathcal{G}_k \}},
$$
provided that the assumptions leading to \eqref{e.exponentsareequal} hold. 
assumption enters the proof is when power-law upper bounds for transfer matrices are needed. This is completely analogous to
the result and proof in \cite{DKL00}. All other components of the proof work for much more general frequencies. This renews
the need for one to revisit the proof of power-law upper bounds for transfer matrices (cf.~\cite{DL99, IRT92}) in order to
determine whether it extends to a larger class of frequencies.
\end{remark}

\begin{proof}[Proof of Proposition~\ref{p.transportbounds}.]
(a) Let $\lambda > 4$ and choose $\delta \in (0,\delta(\lambda)/2)$. Fix $k$ and $B \in \mathcal{G}_k$, and consider
$B^{2\delta}$. Since $B^{2\delta}$ contains exactly one zero of $t_B$, it follows from the maximum modulus principle and
Rouch\'e's Theorem that
$$
t_B : \mathrm{int}(B^{2\delta}) \to B(0, 2 + 2\delta)
$$
is univalent, and hence
$$
t_B^{-1} : B(0, 2 + 2\delta) \to \mathrm{int}(B^{2\delta})
$$
is well-defined and univalent as well. Consequently, the following mapping is a Schlicht function:
$$
F : B(0,1) \to \C, \quad F(z) = \frac{t_B^{-1} ((2 + 2\delta)z) - z_B}{(2 + 2\delta) [(t_B^{-1})'(0)]}.
$$
That is, $F$ is a univalent function on $B(0,1)$ with $F(0) = 0$ and $F'(0) = 1$.

The Koebe Distortion Theorem (see \cite[Theorem~7.9]{C95}) implies that
\begin{equation}\label{koebe}
\frac{|z|}{(1 + |z|)^2} \le |F(z)| \le \frac{|z|}{(1 - |z|)^2} \text{ for } |z| < 1.
\end{equation}
Evaluate the bound \eqref{koebe} on the circle $|z| = \frac{2 + \delta}{2 + 2\delta}$. For such $z$, we obtain
$$
\frac{(2 + \delta)(2 + 2\delta)}{(4 + 3 \delta)^2} \le |F(z)| \le \frac{(2 + \delta)(2 + 2\delta)}{\delta^2}.
$$
By definition of $F$ this means that
$$
| t_B^{-1} ((2 + 2\delta)z) - z_B | \le \frac{(2 + \delta)(2 + 2\delta)}{\delta^2} (2 + 2\delta) |(t_B^{-1})'(0)|
$$
and
$$
| t_B^{-1} ((2 + 2\delta)z) - z_B | \ge \frac{(2 + \delta)(2 + 2\delta)}{(4 + 3 \delta)^2} (2 + 2\delta) |(t_B^{-1})'(0)|
$$
for all $z$ with $|z| = \frac{2 + \delta}{2 + 2\delta}$. In other words, if $|z| = 2 + \delta$, then
\begin{equation}\label{variation}
| t_B^{-1} (z) - z_B | \le \frac{(2 + \delta)(2 + 2\delta)^2}{\delta^2} |(t_B^{-1})'(0)|
\end{equation}
and
\begin{equation}\label{variation2}
| t_B^{-1} (z) - z_B | \ge \frac{(2 + \delta)(2 + 2\delta)^2}{(4 + 3\delta)^2} |(t_B^{-1})'(0)|.
\end{equation}
Note that as $z$ runs through the circle of radius $2 + \delta$ around zero, the point $t_B^{-1} (z)$ runs through the entire
boundary of $B^{\delta}$. Thus, since $|(t_B^{-1})'(0)| = |t_B'(z_B)|^{-1}$, \eqref{variation} and \eqref{variation2} yield
$$
B \Big( z_B, \frac{(2 + \delta)(2 + 2 \delta)^2}{(4 + 3\delta)^2} |t_B'(z_B)|^{-1} \Big) \subseteq B^{\delta} \subseteq B
\Big( z_B, \frac{(2 + \delta)(2 + 2\delta)^2}{\delta^2} |t_B'(z_B)|^{-1} \Big).
$$
In particular, it follows that
$$
\frac{(2 + \delta)(2 + 2 \delta)^2}{(4 + 3\delta)^2} |t_B'(z_B)|^{-1} \le r(B,\delta) \le R(B,\delta) \le \frac{(2 + \delta)(2
+ 2\delta)^2}{\delta^2} |t_B'(z_B)|^{-1}.
$$
Thus,
$$
\frac{\delta^2}{(2 + \delta)(2 + 2\delta)^2} |t_B'(z_B)| \le \frac{1}{R(B,\delta)} \le \frac{1}{r(B,\delta)} \le \frac{(4 +
3\delta)^2}{(2 + \delta)(2 + 2 \delta)^2} |t_B'(z_B)|,
$$
which in turn implies
$$
\frac{\delta^2}{(2 + \delta)(2 + 2\delta)^2} \left( \min \{ |t_B'(z_B)| : B \in \mathcal{G}_k \} \right) \le
\frac{1}{R_k(\delta)}
$$
and
$$
\frac{1}{r_k(\delta)} \le \frac{(4 + 3\delta)^2}{(2 + \delta)(2 + 2 \delta)^2} \left( \min \{ |t_B'(z_B)| : B \in
\mathcal{G}_k \} \right).
$$
This shows \eqref{e.rkRkxprimeconnection1}--\eqref{e.rkRkxprimeconnection2}.

(b) The Parseval identity implies (see, e.g., \cite[Lemma~3.2]{KKL03})
\begin{equation}\label{e.parsform}
2\pi \int_0^{\infty} e^{-2t/T} | \langle \delta_n , e^{-itH} \delta_0 \rangle |^2 \, dt = \int_{-\infty}^\infty \left|\langle
\delta_n  , (H - E - \tfrac{i}{T})^{-1} \delta_0 \rangle \right|^2 \, dE,
\end{equation}
and hence for the time averaged outside probabilities, defined by
\begin{equation}\label{e.taop}
\langle P(N,\cdot) \rangle (T) = \frac{2}{T} \int_0^{\infty} e^{-2t/T} \sum_{|n| \ge N} | \langle \delta_n , e^{-itH} \delta_0
\rangle |^2 \, dt,
\end{equation}
we have
\begin{equation}\label{e.taopresform}
\langle P(N,\cdot) \rangle (T) = \frac{1}{\pi T} \sum_{|n| \ge N} \int_{-\infty}^\infty \left|\langle \delta_n  , (H - E -
\tfrac{i}{T})^{-1} \delta_0 \rangle \right|^2 \, dE.
\end{equation}
The right-hand side of \eqref{e.taopresform} may be studied by means of transfer matrices at complex energies, which are
defined as follows. For $z \in \C$, $n \in \Z$, we set
$$
M(n;\lambda,\alpha,\omega,z) = \begin{cases} T(n;\lambda,\alpha,\omega,z) \cdots T(1;\lambda,\alpha,\omega,z) & n \ge 1, \\
T(n;\lambda,\alpha,\omega,z)^{-1} \cdots T(-1;\lambda,\alpha,\omega,z)^{-1} & n \le -1, \end{cases}
$$
where
$$
T(\ell;\lambda,\alpha,\omega,z) = \begin{pmatrix} z - \lambda \chi_{[1-\alpha,1)}(\ell \alpha + \omega \!\!\!\! \mod 1) & -1
\\ 1 & 0 \end{pmatrix}.
$$

By definition of $R_k(\delta)$, we have
$$
B^\delta \subseteq \{ z \in \C : |\mathrm{Im} \ z| \le R_k(\delta) \}
$$
for every $B \in \mathcal{G}_k$. If $\liminf_{k \to \infty} \frac1k \log \frac{1}{R_k(\delta)} = 0$, then
\eqref{e.alphauupperbound} holds trivially. Thus let us consider the case where $\liminf_{k \to \infty} \frac1k \log
\frac{1}{R_k(\delta)} > 0$. Then, for $\rho' > 0$ small enough, we have
\begin{equation}\label{e.sprimedef}
s' = \frac{\liminf_{k \to \infty} \frac1k \log \frac{1}{R_k(\delta)}}{\limsup_{k \to \infty} \frac1k \log q_k} - \rho' > 0.
\end{equation}
(Should $\liminf_{k \to \infty} \frac1k \log \frac{1}{R_k(\delta)}$ be infinite, we can work with $s'$ arbitrarily large.)
From the definition of $s'$ it follows that for some suitable $C_\delta' > 0$, we have
$$
R_k(\delta) < C_\delta' q_k^{- s'},
$$
for every $k \ge 0$. In particular, by \eqref{e.sigmakgk} in Lemma~\ref{l.components}, we have
\begin{equation}\label{imwidth}
\sigma_k^\delta \cup \sigma_{k+1}^\delta \subseteq \{ z \in \C : |\mathrm{Im} \ z| < C_\delta' q_k^{- s'} \}.
\end{equation}

For each $\varepsilon = \mathrm{Im} \ z > 0$, one obtains lower bounds on $|x_k(E+i\varepsilon)|$ which are uniform for $E \in
[-K,K] \subseteq \R$. Namely, given $\varepsilon > 0$, choose $k$ minimal with the property $C_\delta' q_{k-1}^{- s'} <
\varepsilon$. By \eqref{imwidth}, we infer that $|x_{k-1}(E+i\varepsilon)| > 2 + \delta$ and $|x_{k}(E+i\varepsilon)| > 2 +
\delta$. Since $|x_{-1}(E+i\varepsilon)| = 2 \le 2 + \delta$, the condition \eqref{e.escape.criteria} holds for some $k_0 \le
k$. Lemma~\ref{l.complexenergyBISTlemma} implies that $|x_k(z)| \ge (1 + \delta)^{G^{(k_0)}_{k - k_0}} + 1$ for $k \ge k_0$.
In particular, for $k' \ge k$, we must have
$$
|x_{k'}(E+i\varepsilon)| \ge (1 + \delta)^{G^{(k_0)}_{k'- k}}.
$$
Moreover, notice that $G^{(k_0)}_j$ always satisfies a uniform exponential (in $j$) lower bound.

This motivates the following definitions. Fix some small $\delta > 0$. For $T > 1$, denote by $k(T)$ the unique integer with
$$
\frac{q_{k(T) - 2}^{s'}}{C_\delta'} \le T < \frac{q_{k(T) - 1}^{s'}}{C_\delta'}
$$
and let
$$
N(T) = q_{k(T) + \lfloor \sqrt{k(T)} \rfloor}.
$$
Note that
\begin{align*}
\limsup_{T \to \infty} \frac{\log N(T)}{\log T} & = \limsup_{T \to \infty} \frac{\log q_{k(T) + \lfloor \sqrt{k(T)}
\rfloor}}{\log T} \\ & = \limsup_{T \to \infty} \frac{\log q_{k(T) + \lfloor \sqrt{k(T)} \rfloor}}{k(T) + \lfloor \sqrt{k(T)}
\rfloor} \frac{k(T) + \lfloor \sqrt{k(T)} \rfloor}{\log T} \\ & \le \limsup_{T \to \infty} \frac{\log q_{k(T) + \lfloor
\sqrt{k(T)} \rfloor}}{k(T) + \lfloor \sqrt{k(T)} \rfloor} \frac{k(T) + \lfloor \sqrt{k(T)} \rfloor}{s' \log q_{k(T) - 2}} \\ &
= \frac{1}{s'} \limsup_{T \to \infty} \frac{\log q_{k(T) + \lfloor \sqrt{k(T)} \rfloor}}{k(T) + \lfloor \sqrt{k(T)} \rfloor}
\frac{k(T) - 2}{\log q_{k(T) - 2}} \\ & = \frac{1}{s'},
\end{align*}
since $\lim_{k \to \infty} \frac1k \log q_k$ exists and is finite. Similarly, we see that
$$
\liminf_{T \to \infty} \frac{\log N(T)}{\log T} \ge \frac{1}{s'}.
$$
Thus, for every $\tilde \nu > 0$, there are constants $C_{\tilde \nu,1}, C_{\tilde \nu,2} > 0$ such that
\begin{equation}\label{ntfib}
C_{\tilde \nu, 1} T^{\frac{1}{s'} - \tilde \nu} \le N(T) \le C_{\tilde \nu, 2} T^{\frac{1}{s'} + \tilde \nu}.
\end{equation}
It follows from \cite[Theorem~7]{DT07} and the argument above that\footnote{This estimate obviously works for $\omega = 0$
since then the trace and the norm are directly related. For general $\omega$, one can use the arguments developed in
\cite{D05}. The central idea is that the trace of words of length $q_k$ occurring in Sturmian sequences of slope $\alpha$ is
the same for all but one word and is given by $x_k$. If the word in question is the ``bad'' one, we can simply shift by one to
see a good word, derive the estimate there and divide by $C^2$, where $C$ bounds the norm of a one-step transfer matrix.}
\begin{align}
\langle P(N(T),\cdot) \rangle (T) & \lesssim \exp (-c N(T)) + T^3 \int_{-K}^K \left( \max_{3 \le n \le N(T)} \left\| M \left(
n; \omega, E+\tfrac{i}{T} \right) \right\|^2 \right) ^{-1} dE \nonumber \\ & \lesssim \exp (-c N(T)) + T^3 (1 + \delta)^{-2
G^{(k_0)}_{\lfloor \sqrt{k(T)} \rfloor}}. \label{e.pntestimate}
\end{align}
(We can estimate the norm on the left half-line in a completely analogous way.) From this bound, we see that $\langle
P(N(T),\cdot) \rangle (T)$ goes to zero faster than any inverse power of $T$. Indeed, due to \eqref{ntfib} this is clear for
the first term \eqref{e.pntestimate}, and for the second term in \eqref{e.pntestimate}, we note that for any $m > 0$, we have
\begin{align*}
\limsup_{T \to \infty} & \log \left[ T^m (1 + \delta)^{-2 G^{(k_0)}_{\lfloor \sqrt{k(T)} \rfloor}} \right] \le \limsup_{T \to
\infty} \log \left[ (N(T))^{\tilde m} (1 + \delta)^{-2 G^{(k_0)}_{\lfloor \sqrt{k(T)} \rfloor}} \right] \\ & \le \limsup_{T
\to \infty} \log \left[ (q_{k(T) + \lfloor \sqrt{k(T)} \rfloor})^{\tilde m} (1 + \delta)^{-2 G^{(k_0)}_{\lfloor \sqrt{k(T)}
\rfloor}} \right] \\ & = \limsup_{k \to \infty} \log \left[ (q_{k + \lfloor \sqrt{k} \rfloor})^{\tilde m} (1 + \delta)^{-2
G^{(k_0)}_{\lfloor \sqrt{k} \rfloor}} \right] \\ & = \limsup_{k \to \infty} \left[ \tilde m \log (q_{k + \lfloor \sqrt{k}
\rfloor}) - 2 G^{(k_0)}_{\lfloor \sqrt{k} \rfloor} \log (1 + \delta) \right] \\ & \le \limsup_{k \to \infty} \left[ C_1 \tilde
m (k + \lfloor \sqrt{k} \rfloor) - C_2^{\sqrt{k}} \log (1 + \delta) \right] \\ & = - \infty
\end{align*}
for a suitable $C_2 > 1$. Here we used \eqref{ntfib}, the existence and finiteness of $\lim_{k \to \infty} \frac1k \log q_k$,
and a trivial lower bound for $G^{(k_0)}$.

Therefore we can apply \cite[Theorem~1]{DT07} and obtain from \eqref{ntfib} that
$$
\tilde \alpha_u^+ \le \frac{1}{s'} + \tilde \nu = \left( \frac{\liminf_{k \to \infty} \frac1k \log
\frac{1}{R_k(\delta)}}{\lim_{k \to \infty} \frac1k \log q_k} - \rho' \right)^{-1} + \tilde \nu.
$$
Since we can take $\rho' > 0$ and $\tilde \nu > 0$ arbitrarily small, \eqref{e.alphauupperbound} follows.

\bigskip

Let us now show \eqref{e.alphaupluslowerbound} and \eqref{e.alphaulowerbound}. Assume first that $\alpha$ is a bounded density number. Then, $\liminf_{k \to \infty} \frac1k \log q_k$ is finite. If $\limsup_{k \to \infty} \frac1k \log \frac{1}{r_k(\delta)}$ is infinite, there is nothing to prove, so let us consider the case where $\limsup_{k \to \infty} \frac1k \log \frac{1}{r_k(\delta)}$ is finite.

Consider $\delta \in (0,\delta(\lambda))$, $\varepsilon > 0$, and a $B \in \mathcal{G}_k$ that serves as a maximizer in
$r_k(\delta) = \max_{B \in \mathcal{G}_k} \sup \{ r > 0 : B(z_B , r) \subseteq B^\delta \}$. Recall that $z_B$ is the unique
zero of $t_B$ in $B^\delta$.

For $\rho > 0$ arbitrary, let
\begin{equation}\label{e.sdef}
s = \frac{\limsup_{k \to \infty} \frac1k \log \frac{1}{r_k(\delta)}}{\liminf_{k \to \infty} \frac1k \log q_k} + \rho.
\end{equation}
Clearly, $s$ is strictly positive. By definition of $s$, for suitably chosen $C_\delta > 0$, we have
\begin{equation}\label{e.sprop}
C_\delta q_k^{s} \ge \frac{2}{r_k(\delta)}
\end{equation}
for every $k \ge 0$.

Take $N = q_k$ and consider $T \ge C_\delta N^{s}$ (which in turn implies $T \ge \frac{2}{r_k(\delta)}$ by \eqref{e.sprop}).
Due to the Parseval formula \eqref{e.parsform}, we can bound the time-averaged outside probabilities from below as follows,
\begin{equation}\label{parseval}
\langle P(N,\cdot) \rangle (T) \gtrsim \frac1T \int_\R \left( \max \left\{ \|M(N;\lambda,\alpha,\omega,E+\tfrac{i}{T})\|,
\|M(-N;\lambda,\alpha,\omega,E+\tfrac{i}{T})\| \right\} \right)^{-2} \, dE.
\end{equation}
See, for example, the proof of \cite[Theorem~1]{DT03} for an explicit derivation of \eqref{parseval} from \eqref{e.parsform}.

By Lemma~\ref{l.powerlawbound} there are constants $C,\xi$ such that for every $k$, every $z \in B^\delta$, and every $\omega
\in \T$, we have
\begin{equation}\label{e.2sidedpowerlaw}
\| M(n;\lambda,\alpha,\omega,z) \| \le C |n|^\xi.
\end{equation}
for $1 \le |n| \le q_k$.

To bound the integral from below, we integrate only over those $E \in (z_B-r_k(\delta), z_B+r_k(\delta))$ for which $E+i/T \in
B(z_B, r_k(\delta)) \subset B^\delta$. Since $\frac{1}{T} \le \frac{r_k(\delta)}{2}$, the length of such an interval $I_k$ is
larger than $cr_k(\delta)$ for some suitable $c > 0$. For $E \in I_k$, we have
$$
\|M(N;\lambda,\alpha,\omega, E+i\varepsilon)\| \lesssim N^{\xi} \lesssim T^{\frac{\xi}{s}}.
$$
Therefore, \eqref{parseval} together with \eqref{e.2sidedpowerlaw} gives
\begin{equation}\label{eq1}
\langle P(N,\cdot) \rangle (T) \gtrsim \frac{r_k}{T} \, T^{-\frac{2\xi}{s}} \gtrsim T^{-2-\frac{2\xi}{s}},
\end{equation}
where $N = q_k$, $T \ge C_\delta N^s$, for any $k \ge k_0$.

In particular, for $T_k = C_\delta q_k^s$, we obtain
$$
\left\langle P \left( \tfrac{1}{C_\delta^{1/s}} T_k^{\frac{1}{s}},\cdot \right) \right\rangle (T_k) = \langle P(q_k,\cdot)
\rangle (T_k) \gtrsim T_k^{-2-\frac{2\xi}{s}},
$$
which implies that
$$
\tilde \beta^+(p) \ge \frac{1}{s} - \frac{2}{p}\left( 1 + \frac{\xi}{s} \right)
$$
and
$$
\tilde \alpha_u^+ \ge \frac{1}{s} = \left( \frac{\limsup_{k \to \infty} \frac1k \log \frac{1}{r_k(\delta)}}{\liminf_{k \to
\infty} \frac1k \log q_k} + \rho \right)^{-1}.
$$
Since $\rho > 0$ can be taken arbitrarily small, this proves \eqref{e.alphaupluslowerbound}.\footnote{The reader may be
concerned about this lower bound assuming that it is possible for the right-hand side to exceed $1$, as it is known that
$\tilde \alpha_u^+$ cannot exceed $1$. However, there is no need to worry as we must have $\limsup_{k \to \infty} \frac1k \log
\frac{1}{r_k(\delta)} \ge \liminf_{k \to \infty} \frac1k \log q_k$. This is simply because of the number of connected
components in question and the fact that their intersections with the real line are all contained in a fixed
($\lambda$-dependent, but $k$-independent) compact subset of $\R$.}

To prove \eqref{e.alphaulowerbound}, assume the stronger condition that $\alpha$ is of bounded type. Let us take any sufficiently large $T$ and choose $k$ maximal with $C_\delta q_k^s \le T$. Then,
$$
C_\delta q_k^s \le T < C_\delta q_{k+1}^s \le B^s C_\delta q_k^s,
$$
where $B$ is chosen so that $q_{k+1} \le B q_k$ for every $k$. Such a $B$ exists since $\alpha$ is of bounded type.

It follows from \eqref{eq1} that
$$
\left\langle P \left( \tfrac{1}{B C_\delta^{1/s}} T^{\frac{1}{s}},\cdot \right) \right\rangle (T) \ge \langle P(q_k,\cdot)
\rangle (T) \gtrsim T^{-2-\frac{2\xi}{s}}
$$
for all sufficiently large $T$. It follows from the definition of $\tilde \beta^-(p)$ and $\tilde \alpha_u^-$ that
$$
\tilde \beta^-(p) \ge \frac{1}{s} - \frac{2}{p}\left( 1 + \frac{\xi}{s} \right)
$$
and
$$
\tilde \alpha_u^- \ge \frac{1}{s} = \left( \frac{\limsup_{k \to \infty} \frac1k \log \frac{1}{r_k(\delta)}}{\liminf_{k \to
\infty} \frac1k \log q_k} + \rho \right)^{-1},
$$
by \eqref{e.sdef}. Since $\rho > 0$ can be taken arbitrarily small, this proves \eqref{e.alphaulowerbound}.

(c) The estimates in this part follow immediately from the estimates in parts (a) and (b). This concludes the proof.
\end{proof}

\begin{remark}\label{r.marinissues}
Let us comment on the results and proofs in \cite{M10}. Unfortunately, there are several issues. We have already pointed out
that \cite[Lemma~2]{M10} is incorrect as stated. Moreover, in the proofs of the main theorems of \cite{M10} there are several
mistakes and gaps. In the proof of \cite[Theorem~1]{M10}, \cite[Lemma~2]{M10} is claimed to be applicable on
\cite[p.~871]{M10}. However, the justification for this is not sufficient. Indeed it does not follow from the arguments given
there that \eqref{e.escape} holds for some $k_0$. Next, there is a mistake in the chain of inequalities at the bottom of
\cite[p.~871]{M10}. The inequality goes in the wrong direction because one has the opposite of what is needed due to the
definition of $\gamma(V)$ on \cite[p.~871]{M10}. This strongly calls into question that the strategy of the proof can work
under the assumption of \cite[Theorem~1]{M10}, that is, when merely assuming that $\limsup_{k \to \infty} \frac1k \log q_k$ is
finite. For these reasons, neither is \cite[Theorem~1]{M10} completely proved in \cite{M10}, nor is it at all clear that
\cite[Theorem~1]{M10} is correct as stated. Finally, as we will point out in the last section, the proof of
\cite[Theorem~2]{M10} is flawed as well and it is also not at all clear whether \cite[Theorem~2]{M10} is correct as stated.
\end{remark}

\section{Asymptotics of Transport Exponents for Sturmian Potentials of Constant Type}\label{sec.5}

In this section we prove Theorem \ref{t.consttype}. The theorem will follow quickly from Propositions~\ref{p.maxband} and
\ref{p.transportbounds}.

\begin{proof}[Proof of Theorem \ref{t.consttype}]
The statement is a quantitative version of \eqref{e.exponentsareequal}. Indeed, $\frac{m+\sqrt{m^2+4}}{2} $ is the larger
eigenvalue of the matrix $\begin{pmatrix}
                                                                                                                                     m
                                                                                                                                     &
                                                                                                                                     1
                                                                                                                                     \\
                                                                                                                                     1
                                                                                                                                     &
                                                                                                                                     0
                                                                                                                                     \\
                                                                                                                                   \end{pmatrix}$,
                                                                                                                                   hence
\begin{equation}\label{e.qk}
\lim_{k \to \infty} \frac1k \log q_k=\log \frac{m+\sqrt{m^2+4}}{2}.
\end{equation}
The limit $\lim_{k \to \infty} \frac{1}{k} \log \min \{ |t_B'(z_B)| : B \in \mathcal{G}_k \}$ does exist (and in fact can be
described in terms of the multipliers of the periodic points of the trace map that corresponds to this potential). For the
case of Fibonacci Hamiltonian, this was proven in \cite[Proposition~3.7]{DGY14}, but the proof for the case $m \ne 1$ is a
verbatim repetition. Moreover, there is a direct connection between $\min \{ |t_B'(z_B)| : B \in \mathcal{G}_k \}$ and the
size of the maximal band $|B_{max, k}|$:

\begin{prop}[Corollary 3.2 from \cite{LQW14}]\label{p.lqw14}
Let $\lambda \ge 20$ and $\alpha$ be irrational. Then there exists a constant $\xi = 4 \exp (180\lambda) > 1$ such that for
any spectral generating band $B$ with generating polynomial $t_B$,
$$
\xi^{-1}\le |t_B'(E)|\cdot |B| \le \xi, \ \ \forall E\in B.
$$
\end{prop}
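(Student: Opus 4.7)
The plan is to reduce the two-sided bound to (i) the mean-value theorem, which locates some $E_0\in B$ with $|t_B'(E_0)|\cdot|B|=4$, plus (ii) a bounded-distortion estimate of the form $|t_B'(E_1)/t_B'(E_2)| \le C_\lambda$ for all $E_1,E_2\in B$. Combining these gives $4/C_\lambda \le |t_B'(E)|\cdot|B|\le 4 C_\lambda$ for every $E\in B$, so the task is to prove (ii) with $C_\lambda$ of size $\exp(O(\lambda))$.

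Step 1 is immediate. Since $t_B$ is a polynomial that maps $B$ monotonically onto $[-2,2]$ (by property (B)), one has $\int_B|t_B'|\,dE=4$, and the mean-value theorem for integrals yields $E_0\in B$ with $|t_B'(E_0)|\cdot|B|=4$.

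Step 2 is the core of the argument, and I would carry it out by complexifying $t_B$ and invoking Koebe's distortion theorem, exactly as is done later in the proof of Proposition~\ref{p.transportbounds}(a). Fix $\delta\in (0,\delta(\lambda))$ with $\delta(\lambda)$ as in Lemma~\ref{l.components}. That lemma gives a complex component $B^{2\delta}$ containing a unique zero $z_B$ of $t_B$; by the argument principle applied on the boundary $\{|t_B|=2+2\delta\}$, the map
\[
t_B : \mathrm{int}(B^{2\delta}) \to B(0,2+2\delta)
\]
is univalent, with univalent inverse $t_B^{-1}$. The Schlicht renormalization
\[
F(z) = \frac{t_B^{-1}((2+2\delta)z) - z_B}{(2+2\delta)(t_B^{-1})'(0)},\qquad z\in B(0,1),
\]
then satisfies $F(0)=0$, $F'(0)=1$, and Koebe's distortion theorem bounds $|F'|$ above and below by an absolute function of $|z|$ on the subdisc $|z|\le(2+\delta)/(2+2\delta)$. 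Pulling this information back through $t_B$ produces a uniform two-sided control on $|t_B'|$ over all of $B^\delta$, in particular over the real band $B$, giving (ii) with a constant $C_\lambda$ that depends only on $\lambda$ through $\delta(\lambda)$.

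The principal obstacle is to quantify $C_\lambda$ as $\exp(O(\lambda))$. Two competing effects have to be balanced: the invariance identity \eqref{e.invariancecond} forces $\delta(\lambda)$ to shrink as $\lambda$ grows in order to preserve the disjointness $\sigma^{\delta}_{(k+1,0)}\cap\sigma^{\delta}_{(k,p)}\cap\sigma^{\delta}_{(k,p-1)}=\emptyset$, while the Koebe distortion constant on the annulus $\{(2+\delta)/(2+2\delta)\le|z|<1\}$ blows up polynomially in $1/\delta$. Tracking these dependencies carefully, and verifying inductively along the ladder of spectral generating bands that the univalence of $t_B$ on $B^{2\delta}$ persists at every level (which is where the hypothesis $\lambda\ge 20$ enters), should yield the explicit exponent $180\lambda$. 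A secondary technical point is the need for the univalence to be uniform across all bands $B\in\bigcup_k \mathcal{G}_k$, which requires the constant $\delta(\lambda)$ in Lemma~\ref{l.components} to be chosen once and for all in terms of $\lambda$ only.
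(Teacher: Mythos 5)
Your strategy is sound, but it is genuinely different from what the paper does: Proposition~\ref{p.lqw14} is not proved in this paper at all --- it is imported verbatim as Corollary~3.2 of \cite{LQW14}, whose proof is real-variable and proceeds by controlling the ratio $|t_B'(E_1)|/|t_B'(E_2)|$ along the ladder of spectral generating bands using the same derivative estimates as Lemma~\ref{quo-deri}; that is where the explicit constant $4\exp(180\lambda)$ comes from. Your route --- the integral mean value theorem to locate $E_0\in B$ with $|t_B'(E_0)|\,|B|=4$, plus Koebe distortion on the complexified band to get bounded distortion of $t_B'$ over $B$ --- reuses the machinery of Lemma~\ref{l.components} and Proposition~\ref{p.transportbounds}(a) and does yield the two-sided bound. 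Two corrections to your discussion of the ``principal obstacle,'' though. First, the dependence of $\delta(\lambda)$ on $\lambda$ goes the opposite way from what you claim: the disjointness $\sigma^{\delta}_{(k+1,0)}\cap\sigma^{\delta}_{(k,p)}\cap\sigma^{\delta}_{(k,p-1)}=\emptyset$ is guaranteed once $(2+\delta)^3+3(2+\delta)^2+4<\lambda^2$, a condition that becomes easier to satisfy as $\lambda$ grows, so $\delta$ may be fixed once and for all (e.g.\ $\delta=1$ works for every $\lambda\ge 20$) and the resulting Koebe distortion constant is absolute rather than $\lambda$-dependent. Second, there is nothing to ``balance'' and no need to reproduce the exponent $180\lambda$: the proposition only asserts the two-sided bound with $\xi=4\exp(180\lambda)$, and any bound with a smaller constant implies it a fortiori, so your argument (once the univalence of $t_B$ on $\mathrm{int}(B^{2\delta})$ is justified exactly as in the proof of Proposition~\ref{p.transportbounds}(a)) actually delivers a stronger, $\lambda$-uniform version of the statement rather than facing an obstacle.
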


This implies that
\begin{equation}\label{e. bmax}
\lim_{k \to \infty} \frac{1}{k} \log \min \{ |t_B'(z_B)| : B \in \mathcal{G}_k \} = -\lim_{k \to \infty} \frac{1}{k} \log
|B_{max, k}|.
\end{equation}

Therefore, we can use Proposition~\ref{p.maxband} to estimate $\lim_{k \to \infty} \frac{1}{k} \log \min \{ |t_B'(z_B)| : B
\in \mathcal{G}_k \}$. We have
\begin{align*}
-\log 8 - \log m + & \left( -1 + \frac{1}{k} \sum_{j=1}^s \left\lfloor \frac{m_j+1}{2} \right\rfloor \right) \log \lambda \le
\frac{1}{k} \log |B_{max, k}| \\ & \le \log 48 - \log m + \left( -1 + \frac{1}{k} \sum_{j=1}^s \left\lfloor \frac{m_j+1}{2}
\right\rfloor \right) \log \lambda.
\end{align*}
In the case when $m=1$ we have $s=1$ and $m_1=k$, hence we get
\begin{equation}\label{e.m1}
-\log 8 + \frac{1}{k} \left( \left\lfloor \frac{k+1}{2} \right\rfloor - 1 \right) \log \lambda \le \frac{1}{k} \log |B_{max,
k}| \le \log 48 + \frac{1}{k} \left( \left\lfloor \frac{k+1}{2} \right\rfloor - 1 \right) \log \lambda.
\end{equation}
In the case when $m>1$ we have $s = 0$, and hence
\begin{equation}\label{e.mnot1}
-\log 8 - \log m - \log \lambda \le \frac{1}{k} \log |B_{max, k}| \le \log 48 - \log m - \log \lambda.
\end{equation}
Now Theorem \ref{t.consttype} follows from a combination of \eqref{e.exponentsareequal}, \eqref{e.qk}, \eqref{e. bmax},
\eqref{e.m1}, and \eqref{e.mnot1}.
\end{proof}

holds not only for Sturmian operators with frequencies of constant type, but also for frequencies with periodic (or just
eventually periodic) continued fraction expansion. These operators can be studied using the trace map formalism \cite{G14,
Me14}, and the proof from \cite{DGY14} of the existence of the limit works without any changes. The explicit asymptotics can
also be calculated in each particular case. For example, for $\alpha = [1,2,1,2,1,2,\ldots]$, we have $\lim_{\lambda \to
\infty} \tilde{\alpha}^{\pm}_u \cdot \log \lambda =  \log (2 + \sqrt{3})$. And for $\alpha' = [1, 1, 2, 2, 1, 1, 2, 2, 1, 1,
2, 2,\ldots]$, we have $\lim_{\lambda \to \infty} \tilde{\alpha}^{\pm}_u \cdot \log \lambda = \frac{1}{3} \log \frac{15 +
\sqrt{221}}{2}$. Notice that the frequencies of each coefficient in the continued fraction expansion of $\alpha$ and $\alpha'$
in these examples are the same, while the transport exponents are different (for sufficiently large coupling). At the same
time, \eqref{e.exponentsareequal} together with Proposition~\ref{p.maxband} imply for irrational numbers of bounded type that
if \eqref{e.exponentsareequal} holds, then the corresponding asymptotics  $\lim_{\lambda \to \infty} \tilde{\alpha}^{\pm}_u
\cdot \log \lambda$ is a ``tail property'' of the sequence of continued fraction coefficients. Notice that it is known that
the Hausdorff dimension of the spectrum of Sturmian Hamiltonian is a ``tail property'' of the sequence of continued fraction
coefficients \cite{DG15} (for sufficiently large coupling). This motivates the following question.

\begin{quest}
Is it true that the transport exponents of a Sturmian Hamiltonian depend only on the ``tail'' of the continued fraction
expansion of the corresponding irrational number?
\end{quest}

\section{The Transport Exponent for a Lebesgue Typical Frequency}

In this section we prove Theorem~\ref{t.bound}. We first discuss the asymptotic behavior of the length of the longest spectral
generating band.

\begin{prop}\label{p.ae}
Denote by $B_{max, k}$ the largest band in $\mathcal{G}_k$. For Lebesgue almost every $\alpha \in \R/\Z$, we have
$$
\lim_{\lambda \to \infty} \frac{1}{\log \lambda} \left( \limsup_{k \to \infty} \frac1k \log |B_{max, k}| \right) =
\lim_{\lambda \to \infty} \frac{1}{\log \lambda} \left( \liminf_{k \to \infty} \frac1k \log |B_{max, k}| \right) =-\frac{\log
\frac{1+\sqrt{5}}{2}}{\log 2}.
$$
\end{prop}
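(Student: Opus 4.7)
The plan is to reduce Proposition~\ref{p.ae} to an ergodic-theoretic computation under the Gauss measure, via Proposition~\ref{p.maxband}.

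First I would use Proposition~\ref{p.maxband} to write, for every $k \ge 1$,
\begin{equation*}
\frac{1}{k}\log|B_{max,k}| = -\log\delta_k + O(1) + \left(-1 + \frac{S_k}{k}\right)\log\lambda,
\end{equation*}
with $\delta_k = (a_1\cdots a_k)^{1/k}$, $S_k := \sum_{j=1}^s \lfloor (m_j+1)/2\rfloor$, and the $O(1)$ term sandwiched in $[-\log 8,\log 48]$. For Lebesgue-a.e.\ $\alpha$, Khinchin's theorem gives that $\delta_k$ converges to Khinchin's constant, so the $\lambda$-independent part of the right-hand side is uniformly bounded in $k$. Since $S_k/k \in [0,1]$, a short argument comparing along sequences realizing $\limsup_k$ and $\liminf_k$ of $g_k := -1 + S_k/k$ yields
\begin{equation*}
\lim_{\lambda\to\infty}\frac{1}{\log\lambda}\limsup_k\frac{1}{k}\log|B_{max,k}| = -1 + \limsup_k\frac{S_k}{k},
\end{equation*}
and similarly for $\liminf$. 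It thus suffices to show that for Lebesgue-a.e.\ $\alpha$, $\lim_k S_k/k$ exists and equals $1 - \frac{\log\frac{1+\sqrt{5}}{2}}{\log 2}$.

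Next I would realize $S_k$ as (up to a bounded boundary term) a Birkhoff sum for the Gauss map. Combinatorially, $S_k$ counts the indices $j\in\{1,\ldots,k\}$ with $a_j=1$ such that the maximal run of consecutive 1's ending at $j$ has odd length. Passing to the natural extension of the Gauss system with invariant measure $\mu_G = (\log 2)^{-1}(1+x)^{-1}\,dx$ and shift $T$, define $\tilde f$ on the two-sided sequence space as the indicator of the event ``$a_0=1$ and the run of 1's ending at $0$ has odd length''. The discrepancy between $S_k$ and $\sum_{j=1}^k \tilde f \circ T^{j-1}$ is bounded by the length of the initial run of 1's in $(a_n)_{n\ge 1}$, which is $\mu_G$-a.s.\ finite; dividing by $k$, this discrepancy is $o(1)$. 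Birkhoff's ergodic theorem and ergodicity of the natural extension then give $S_k/k \to E_{\mu_G}[\tilde f]$ almost surely.

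Finally I would evaluate $E_{\mu_G}[\tilde f]$ explicitly. By $T$-invariance,
\begin{equation*}
E_{\mu_G}[\tilde f] = \sum_{\substack{\ell\ge 1\\ \ell\text{ odd}}}(\pi_\ell - \pi_{\ell+1}) = \sum_{\ell\ge 1}(-1)^{\ell+1}\pi_\ell,
\end{equation*}
where $\pi_\ell := \mu_G(\{a_1 = \cdots = a_\ell = 1\})$. The cylinder $\{a_1=\cdots=a_\ell=1\}$ is the interval between the Fibonacci ratios $F_\ell/F_{\ell+1}$ and $F_{\ell+1}/F_{\ell+2}$, and a direct computation from $\mu_G([a,b]) = \log_2((1+b)/(1+a))$ together with the identity $1+F_n/F_{n+1} = F_{n+2}/F_{n+1}$ yields $(-1)^{\ell+1}\pi_\ell = \log_2(F_{\ell+2}/F_{\ell+1}) - \log_2(F_{\ell+3}/F_{\ell+2})$. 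The resulting telescoping series sums to $\log_2(F_3/F_2) - \log_2\lim_n(F_{n+1}/F_n) = 1 - \frac{\log\frac{1+\sqrt{5}}{2}}{\log 2}$, which, combined with the first step, yields the claimed value $-\frac{\log\frac{1+\sqrt{5}}{2}}{\log 2}$. The main obstacle is the clean identification of $S_k$ with a Birkhoff sum, since the ``odd position in run'' statistic is backward-looking and requires the natural extension to be expressed as a measurable function of a single point; all remaining ingredients (Khinchin, Birkhoff, the $\lambda$-limit interchange, and the telescoping computation) are standard.
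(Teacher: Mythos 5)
Your proposal is correct and follows the same skeleton as the paper's proof (reduce via Proposition~\ref{p.maxband} to computing $\lim_k \frac1k\sum_j\lfloor(m_j+1)/2\rfloor$ almost surely, handle $\delta_k$ by Khinchin, then evaluate the limit by ergodicity of the Gauss measure using the Fibonacci-ratio endpoints of the cylinders $[1^\ell]$), but the central computation is organized quite differently. The paper splits $\lfloor(m_j+1)/2\rfloor=\frac{m_j}{2}+\frac12\mathbf{1}_{\{m_j\text{ odd}\}}$ and evaluates two separate ergodic averages, the second of which requires a fairly intricate manipulation of the indicators $\chi_{[m1^{2p-1}n]}$ with a double telescoping over $p$; your reformulation of the whole sum as the Birkhoff sum of a single indicator (``$a_j=1$ and $j$ sits at odd offset in its run'') collapses everything into the one alternating series $\sum_{\ell\ge1}(-1)^{\ell+1}\pi_\ell$, which telescopes in one line to $1-\log_2\frac{1+\sqrt5}{2}$ via $F_{n+2}=F_{n+1}+F_n$; this is arguably cleaner. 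The price you pay is that your run statistic is backward-looking, forcing you through the natural extension and a boundary-term argument; note that this is avoidable, since counting the positions at odd offset from the \emph{end} of each run gives the same total $\lceil m/2\rceil$ per run and yields the forward-measurable event $\bigcup_{\ell\text{ odd}}\{a_1=\cdots=a_\ell=1,\ a_{\ell+1}\ne1\}$, whose Gauss measure is exactly your $\sum_{\ell\text{ odd}}(\pi_\ell-\pi_{\ell+1})$, so Birkhoff's theorem applies directly on the one-sided shift. Your interchange of the $\lambda\to\infty$ limit with $\limsup_k/\liminf_k$ is justified exactly as you indicate, and is in fact simpler here because $S_k/k$ converges almost surely.
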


\begin{proof}
We apply Proposition~\ref{p.maxband}. Note first that for Lebesgue almost every $\alpha$, $\lim_{k \to \infty} \frac1k \log
\delta_k^{-k}$ exists and is finite.\footnote{Indeed, for Lebesgue almost every $\alpha$, $\lim_{k \to \infty}  \delta_k$
exists and is equal to the Khinchin constant $K_0 =
2.685452001065306445309714835481795693820382293994462953051152345557218\ldots$. We are not using the specific value of the
Khinchin constant here.} Since we are going to divide by $\log \lambda$ and send $\lambda \to \infty$, we can concentrate on
the third factor in
\begin{equation}\label{e.bmaxestimate}
8^{-k} \cdot \delta_k^{-k} \cdot \lambda^{-k+\sum_{j=1}^s \lfloor (m_j+1)/2 \rfloor} \le |B_{max, k}| \le 48^k \cdot
\delta_k^{-k} \cdot \lambda^{-k+\sum_{j=1}^s \lfloor (m_j+1)/2 \rfloor},
\end{equation}
which holds for $\lambda \ge 20$; compare \eqref{main-esti}.

We have
\begin{align*}
\sum_{j=1}^s \left\lfloor \frac{m_j+1}{2} \right\rfloor & = \sum_{\substack{1 \le j \le s\\m_j \text{ odd}}} \frac{m_j+1}{2} +
\sum_{\substack{1 \le j \le s\\m_j \text{ even}}} \frac{m_j}{2} \\ & = \sum_{1 \le j \le s} \frac{m_j}{2} + \frac{1}{2} \# \{
1 \le j \le s : m_j \text{ odd} \}.
\end{align*}

Due to the ergodicity of the Gauss measure we have, in the notation from Proposition~\ref{p.maxband}, for Lebesgue almost
every $\alpha$,
\begin{align*}
\lim_{k \to \infty} \frac{1}{2k} \sum_{j=1}^s m_j & = \lim_{k \to \infty} \frac{1}{2k} \# \{ \text{$1$'s in } a_1 \ldots a_k
\} \\ & = \frac12 \cdot \frac{1}{\log 2} \int_{1/2}^{1}\frac{dx}{1+x} \\ & = \frac{1}{2\log 2} \left[ \log 2 - \log
\frac{3}{2} \right] \\ & = \frac{\log 4/3}{2\log 2}.
\end{align*}
For $x\in [0,1]$, let $[a_1(x),a_2(x),\cdots]$ be its continued fraction expansion,  write $w_n(x)=a_1(x)\cdots a_n(x)$.
Define Fibonacci sequence as
$$
F_0=1, F_1=1, F_{k+1}=F_k+F_{k-1} \ \ (k\ge 1).
$$
Write $\theta_k=F_{k-1}/F_k.$ By induction  we can show that
$$
\begin{cases}
w_{2p-1}(x)=1^{2p-1} \ \ &\text{ if and only if }\ \  x\in[\theta_{2p},\theta_{2p-1}], \\ w_{2p}(x)=1^{2p} \ \ &\text{ if and
only if }\ \  x\in[\theta_{2p},\theta_{2p+1}].
\end{cases}
$$
Therefore, we have
\begin{align*}
\int_{[0,1]}\chi_{[1^{m}]}(w_{m}(x)) \frac{dx}{1+x}=
\begin{cases}
\log\frac{(1+\theta_{2p-1})}{(1+\theta_{2p})}& m=2p-1, \\ \log\frac{(1+\theta_{2p+1})}{(1+\theta_{2p})}& m=2p.
\end{cases}
\end{align*}
Denote $\theta_\infty=\lim_{p\to \infty}\theta_p=\frac{\sqrt{5}-1}{2}$.

Thus,
\begin{align*}
\lim_{k \to \infty} & \frac{1}{2k} \# \{ 1 \le j \le s : m_j \text{ odd} \}\\
  =& \frac{1}{2\log 2}\int_{[0,1]}\sum_{p\ge1}\sum_{m\ge 2}\sum_{n\ge 2}\chi_{[m1^{2p-1}n]}(w_{2p+1}(x))\frac{dx}{1+x}\\
  =&\sum_{p\ge1} \sum_{m\ge 2}\frac{1}{2\log 2}\int_{[0,1]}\left(\sum_{n\ge
  1}\chi_{[m1^{2p-1}n]}(w_{2p+1}(x))-\chi_{[m1^{2p}]}(w_{2p+1}(x))\right)\frac{dx}{1+x}\\
  =&\sum_{p\ge1} \sum_{m\ge 2}\frac{1}{2\log
  2}\int_{[0,1]}\left(\chi_{[m1^{2p-1}]}(w_{2p}(x))-\chi_{[m1^{2p}]}(w_{2p+1}(x))\right)\frac{dx}{1+x}\\
  =&\sum_{p\ge1} \sum_{m\ge 1}\frac{1}{2\log
  2}\int_{[0,1]}\left(\chi_{[m1^{2p-1}]}(w_{2p}(x))-\chi_{[m1^{2p}]}(w_{2p+1}(x))\right)\frac{dx}{1+x}\\
  &-\sum_{p\ge1} \frac{1}{2\log
  2}\int_{[0,1]}\left(\chi_{[1^{2p}]}(w_{2p}(x))-\chi_{[1^{2p+1}]}(w_{2p+1}(x))\right)\frac{dx}{1+x}\\
   =&\sum_{p\ge1}\frac{1}{2\log
   2}\int_{[0,1]}\left(\chi_{[1^{2p-1}]}(w_{2p-1}(x))-\chi_{[1^{2p}]}(w_{2p}(x))\right)\frac{dx}{1+x}\\
  &-\sum_{p\ge1} \frac{1}{2\log
  2}\int_{[0,1]}\left(\chi_{[1^{2p}]}(w_{2p}(x))-\chi_{[1^{2p+1}]}(w_{2p+1}(x))\right)\frac{dx}{1+x}\\
  =&\sum_{p\ge1}\frac{1}{2\log
  2}\int_{[0,1]}\left(\chi_{[1^{2p-1}]}(w_{2p-1}(x))-2\chi_{[1^{2p}]}(w_{2p}(x))+\chi_{[1^{2p+1}]}(w_{2p+1}(x))\right)\frac{dx}{1+x}\\
  =&\sum_{p\ge1}\frac{1}{2\log
  2}\left(\log\frac{1+\theta_{2p-1}}{1+\theta_{2p}}-2\log\frac{1+\theta_{2p+1}}{1+\theta_{2p}}+\log\frac{1+\theta_{2p+1}}{1+\theta_{2p+2}}\right)\\
  =&\sum_{p\ge1}\frac{1}{2\log 2}\log\frac{(1+\theta_{2p-1})(1+\theta_{2p})}{(1+\theta_{2p+1})(1+\theta_{2p+2})} \\
   =&\frac{1}{2\log 2}\log\frac{(1+\theta_{1})(1+\theta_{2})}{(1+\theta_{\infty})^2} \\
   =&-\frac{\log \frac{3+\sqrt{5}}{6}}{2\log 2},
\end{align*}
where for the fifth equality we use the fact that the Gauss measure is invariant. Therefore,
\begin{align*}
-1 + \lim_{k \to \infty} \frac{1}{k} \sum_{j=1}^s \left\lfloor \frac{m_j+1}{2} \right\rfloor & = -1 + \lim_{k \to \infty}
\frac{1}{2k} \sum_{j=1}^s \left( m_j + \# \{ 1 \le j \le s : m_j \text{ odd} \} \right)\\ & = -1 + \frac{\log 4/3}{2\log 2}
-\frac{\log \frac{3+\sqrt{5}}{6}}{2\log 2} \\ & = -\frac{\log \frac{1+\sqrt{5}}{2}}{\log 2},
\end{align*}
and the proposition follows from this statement together with \eqref{e.bmaxestimate}.
\end{proof}

\begin{proof}[Proof of Theorem~\ref{t.bound}.]
By \eqref{e.upperbound} we have
$$
\tilde \alpha_u^+ \le \frac{\lim_{k \to \infty} \frac1k \log q_k}{\liminf_{k \to \infty} \frac{1}{k} \log \min \{ |t_B'(z_B)|
: B \in \mathcal{G}_k \}}
$$
for $\lambda > 4$, provided that $\alpha$ is such that $\lim_{k \to \infty} \frac1k \log q_k$ exists and is finite.

Applying Proposition~\ref{p.lqw14}, we get
$$
\tilde \alpha_u^+ \le - \frac{\lim_{k \to \infty} \frac1k \log q_k}{\limsup_{k \to \infty} \frac{1}{k} \log \left( |B_{max,
k}| \right)}
$$
for $\lambda \ge 20$.

It is well known that for Lebesgue almost every irrational number $\alpha$, one has $\lim_{k \to \infty} \frac1k \log q_k =
\frac{\pi^2}{12\log 2}$. Therefore, as a consequence of \eqref{e.upperbound} and Proposition~\ref{p.ae}, the result follows.
\end{proof}

\section{Quasi-Ballistic Transport for a Generic Set of Frequencies}

In this section we prove Theorem~\ref{t.quasiballistic2}. That is, for arbitrary coupling and for frequencies $\alpha$ from a
dense $G_\delta$ subset of $(0,1)$, we have quasi-ballistic transport in the sense that all the transport exponents associated
with sequences of time scales are equal to one. The proof will be inspired by a construction of Last \cite{L96} in the case of
the almost Mathieu operator.

\begin{proof}[Proof of Theorem~\ref{t.quasiballistic2}.]
Fix $\lambda > 0$. Given some rational number $\alpha_r = [a_1(\alpha_r), \ldots, a_{\ell_{\alpha_r}}(\alpha_r)]$ in $(0,1)$,
we consider the periodic operator $H_{\lambda,\alpha_r,0}$. This operator has ballistic transport due to the periodicity of
its potential, and hence operators with potentials that are sufficiently close to this potential on a sufficiently large
interval around the support of the initial state have similar transport behavior, at least as long as the evolution is
essentially confined (by the general ballistic upper bound) to the interval on which the potentials are close. Since our
potentials take only two values, we will actually want to enforce coincidence on the large intervals in question. By the
hierarchical structures of Sturmian potentials, if we consider an $\alpha \in (0,1)$ with continued fraction coefficients $\{
a_j(\alpha) \}$ so that $a_j(\alpha) = a_j(\alpha_r)$ for $1 \le j \le \ell_{\alpha_r}$ and $a_{\ell_{\alpha_r} + 1}(\alpha)$
large, then, on the one hand, $\alpha$ and $\alpha_r$ are close and, on the other hand, the potential of
$H_{\lambda,\alpha,0}$ coincides with that of $H_{\lambda,\alpha_r,0}$ on $[1,q_{\ell_{\alpha_r}}]$ and, moreover, this block
is repeated many (namely $a_{\ell_{\alpha_r} + 1}(\alpha)$) times. Now we need to shift these $a_{\ell_{\alpha_r} +
1}(\alpha)$ blocks to the left so that they are centered around the origin, since we are studying the evolution of
$\delta_0$.\footnote{This is the point missed in the proof of \cite[Theorem~6]{M10}.} This can be accomplished by choosing a
suitable phase, that is, by considering the operator $H_{\lambda,\alpha,\omega}$ for a suitable $\omega \in \R/\Z$.

The ideas above can be turned into a quantitative statement. Choose a sequence $\{ p_m \}_{m \in \Z_+} \subset (0,1]$ such
that each number $1/q$, $q \in \Z_+$ appears in this sequence infinitely many times. Then, a minor generalization of
\cite[Lemma~7.2]{L96} and some ideas from the proof of \cite[Lemma~4.2]{DLY} yield the following: For every $m \in \Z_+$,
there exists $S(m,\alpha_r) \in \Z_+$, which can in addition be assumed to satisfy $S(m,\alpha_r) \ge m$, such that if
$a_{\ell_{\alpha_r} + 1}(\alpha) \ge S(m,\alpha_r)$, then for a suitable choice of $\omega \in \R/\Z$, we have that
\begin{equation}\label{e.transportestimate}
\langle \langle |X|_{\delta_0}^{p_m} \rangle \rangle (T_m) > \frac{T_m^{p_m}}{\log T_m} \quad \text{ for some } T_m \ge m.
\end{equation}
The operator implicit in this statement is $H_{\lambda,\alpha,\omega}$. In fact, a bit more can be said, and this will become
important shortly. The potential corresponding to $\alpha_r$ is $q_{\ell_{\alpha_r}}(\alpha_r)$-periodic, and any Sturmian
potential with frequency $\alpha$ whose continued fraction coefficients coincide with those of $\alpha_r$ up to index
$\ell_{\alpha_r}$ have this periodic block present, and actually repeated $a_{\ell_{\alpha_r} + 1}(\alpha)$ many times; see
the papers \cite{DL99a, DL99} on partitions of Sturmian sequences for a detailed study of this structure. Last's argument from
\cite{L96} derives \eqref{e.transportestimate} for all potentials that coincide with the periodic reference potential on a
sufficiently large interval around the origin. This shows that if $a_{\ell_{\alpha_r} + 1}(\alpha)$ is large enough, a
suitable shift forces coincidence on the needed interval size. The point now is that in our iteration below of this argument,
the potential will not be changed anymore on this interval, and hence the estimate \eqref{e.transportestimate} remains valid
for all the subsequent modifications of the potential (which turns a sequence of periodic potentials, corresponding to a
sequence of rational $\alpha_r$'s into an aperiodic limit sequence).

Set
$$
U(m,\alpha_r) := \left\{ \alpha : a_j(\alpha) = a_j(\alpha_r) \text{ for } 1 \le j \le \ell_{\alpha_r}, \; a_{\ell_{\alpha_r}
+ 1}(\alpha) \ge S(m,\alpha_r) \right\}.
$$
This is an open subset of $(0,1)$. Letting $\alpha_r$ range over all rational numbers in $(0,1)$, we obtain the open and dense
set
$$
\bigcup_{\alpha_r} U(m,\alpha_r).
$$
Thus, the set
$$
\mathcal{G} := \bigcap_{m \in \Z_+} \bigcup_{\alpha_r} U(m,\alpha_r)
$$
is a dense $G_\delta$ set.

Consider $\alpha \in \mathcal{G}$. Then, for every $m \in \Z_+$, there exists a rational number $\alpha_r = \alpha_r(m)$ such
that $\alpha \in U(m,\alpha_r)$. Consider the sequence $\alpha_r = \alpha_r(m)$, $m \to \infty$, such that $\alpha \in
U(m,\alpha_r)$. In particular, we have $a_j(\alpha) = a_j(\alpha_r) \text{ for } 1 \le j \le \ell_{\alpha_r(m)}$. There are
two cases: either $\ell_{\alpha_r(m)}$ is bounded as $m \to \infty$, or not. Since $a_{\ell_{\alpha_r(m)} + 1}(\alpha) \ge
S(m,\alpha_r) \ge m$, the first case is impossible. Thus, $\ell_{\alpha_r(m)}$ is unbounded as $m \to \infty$. But then, by
the coincidence $a_j(\alpha) = a_j(\alpha_r) \text{ for } 1 \le j \le \ell_{\alpha_r(m)}$, the $\alpha_r(m)$ can be ordered in
such a way that we only add new continued fraction coefficients in each step. Each time $\alpha_r = [a_1(\alpha_r), \ldots,
a_{\ell_{\alpha_r}}(\alpha_r)]$ is extended to $\alpha_{r'} = [a_1(\alpha_r), \ldots, a_{\ell_{\alpha_r}}(\alpha_r),
a_{\ell_{\alpha_r}+1}(\alpha_{r'}), \ldots, , a_{\ell_{\alpha_{r'}}}(\alpha_{r'})]$, we enlarge the period and the new
periodic potential coincides with the previous one on a large number of the previous periods in both directions from the
origin, so that the estimate \eqref{e.transportestimate} holds, and will continue to hold if we modify the new periodic
potential in a similar way to one with an even larger period.

For the limit potential, which by construction belongs to the Sturmian subshift contained in $\{ 0,\lambda \}^\Z$ of slope
$\alpha$, we have the estimate \eqref{e.transportestimate} for every $m$. Since the sequence $\{ p_m \}_{m \in \Z_+} \subset
(0,1]$ contains each number $1/q$, $q \in \Z_+$ infinitely many times, it follows that $\tilde \beta^+(1/q) = 1$ for every $q
\in \Z_+$. By the fact that $\tilde \beta^+(p)$ is non-decreasing in $p$ it follows that, in fact, we have $\tilde \beta^+(p)
= 1$ for every $p > 0$. The desired genericity statement follows.
\end{proof}

\end{document}